\documentclass[11pt]{article}
\usepackage{amsmath,amsthm,amsfonts,amssymb,mathrsfs,epsfig,bm}
\usepackage{color}

\oddsidemargin	0.635cm
\textwidth	15.3cm
\topmargin	-1cm
\textheight	23cm
\parskip 	\smallskipamount

%%%%%%%%%%%THEOREMS%%%%%%%%%%%
\newtheorem{theorem}{Theorem}
\newtheorem{lemma}{Lemma}
\newtheorem{corollary}{Corollary}
\newtheorem{proposition}{Proposition}

\newtheorem{remark}{Remark}

%%%%%%%%%%%LETTERS%%%%%%%%%%%

\def\Z{\mathbb{Z}}

\def\R{\mathbb{R}}
\def\C{\mathbb{C}}
\def\FF{\mathscr{F}}
\def\BB{\mathscr{B}}
\def\RR{\mathcal{R}}

\def\NN{\mathcal{N}}
\def\WW{\mathsf{W}}
\def\ZZ{\mathsf{Z}}

\renewcommand{\phi}{\varphi}
\renewcommand{\epsilon}{\varepsilon}

%%%%%%%%%%%SYMBOLS & OPERATORS%%%%%%%%%%%
\newcommand{\1}{{\text{\Large $\mathfrak 1$}}}
\newcommand{\comp}{\raisebox{0.1ex}{\scriptsize $\circ$}}
\newcommand{\cadlag}{{c\`adl\`ag} }

\renewcommand{\liminf}{\varliminf}

\newcommand{\eqdist}{\stackrel{\text{\rm d}}{=}}

\newcommand{\ee}{\mathfrak e}

\begin{document}

\title{\bf Analysis of stochastic fluid queues driven by local time processes}
\author{
{\sc Takis Konstantopoulos}\thanks{School of Mathematical and Computer
Sciences, Heriot-Watt University, Edinburgh EH14 4AS, UK} 
%\hspace*{1cm} 
\and
{\sc Andreas E. Kyprianou}\thanks{Department of Mathematical Sciences,
The University of Bath, Bath BA2 7AY, UK}
\and 
{\sc Paavo Salminen}$^\ddagger$$^0$
%\hspace*{1cm} 
\and
{\sc Marina Sirvi\"o}\thanks{Department of Mathematics, \AA bo Akademi
University, Turku, FIN-20500, Finland}
}
\maketitle
\date{}
\footnotetext{Corresponding author}

\begin{abstract}
We consider a stochastic fluid queue served by a constant rate server
and driven by a process which is the local time of a certain Markov process.
Such a stochastic system can be used as a model in a priority service system,
especially when the time scales involved are fast. 
The input (local time) in our model is always singular with
respect to the Lebesgue measure which in many applications is 
``close'' to reality.
We first discuss how to rigorously construct the (necessarily) unique 
stationary version of the system under some natural stability conditions.
We then consider the distribution of
performance steady-state characteristics, namely, the 
buffer content, the idle period and the busy period.
These derivations are much based on the
fact that the inverse of the local time of a Markov process is a L\'evy process (a subordinator) hence making 
the 
%powerful 
theory of L\'evy processes applicable. Another important ingredient in our approach 
is the Palm calculus coming from the point process point of view.
%We take advantage of the fact that the input (local time) is singular with
%respect to the Lebesgue measure and use a Palm calculus technique to
%shed light to distributional relations. We also take advantage of the
%fact that the inverse of the local time is a L\'evy process (a subordinator)
%to explicitly derive various distributions.
\\ \\%\bigskip\noindent
{\rm Keywords:} Local time, fluid queue, L\'evy process, Skorokhod reflection,
performance analysis, Palm calculus, inspection paradox.
\\ \\ %\bigskip\noindent
{\rm AMS Classification:} 60G10, 60G50, 60G51, 90B15.
%
%\vspace*{2mm}
%\ams{???}{???}
%% key to ams subject classification:
%60G10 Stationary processes
%60G50 Sums of independent random variables; random walks
%60G51 Processes with independent increments
%90B15 Network models, stochastic
\end{abstract}

\section{Introduction}
\label{sec1}
This paper extends the results of Mannersalo et al.\ \cite{MNS} who
introduced a fluid queue (or storage process) driven by the local
time at zero of a reflected Brownian motion and served by a deterministic
server with constant rate. 
The motivation provided in \cite{MNS} is that the system provides a macroscopic
view of a priority  queue with two priority classes.  Indeed, in such
a system, the highest priority class (class 1) goes through as if the lowest
one does not exist, whereas the lowest priority class (class 2) gets served
whenever no item of the highest priority is present. In telecommunications
terminology, class 2 only  receives whatever bandwidth
remains after class 1 served. As argued in \cite{MNS},
if  the highest priority queue is,  macroscopically, approximated by
a reflected Brownian motion, the lowest priority queue is driven
by the cumulative idle  time  of the first one, which is approximated by
the local time of the reflected Brownian motion at $0$.

In view of Internet networking applications, such as the service provision
amongst several classes of service (e.g.\ streaming video and expedited data),
the fluid or macroscopic model is thus quite appropriate for obtaining
a better picture of the situation and for performance analysis and design.
{From} a mathematical point of view, the model is a  rare example  
of a non-trivial fluid  queue  whose performance characteristics  (such as
steady-distribution) can be computed explicitly.
If, in addition, we take into account the heavy-tailed nature of traffic
on the Internet, it seems reasonable to consider a L\'evy process as
a model for class 1 queue. This provides motivation for studying 
a queue whose input is the local time of a reflected L\'evy process.

More generally, let $X$ be a Markov process and $L$ its local time
at a specific point. 
The fluid queue driven by $X$ refers to the
stochastic system defined by
\[
Q_t = Q_0 + L_t - t + I_t, \quad t \ge  0,
\]
where $Q_t \ge 0$ for all $t \ge 0$, and
$I$ is a non-decreasing process, starting from $0$, such that 
\[
\int_0^\infty \1(Q_s > 0) dI_s =0.
\]
Thus, $Q$ is obtained by Skorokhod reflection and $I$ is necessarily 
given by 
$$
I_t = -\inf_{0  \le s \le t} [(Q_0+L_s-s)\wedge 0];
$$
see \cite{KL}.
By considering, instead of $0$, an arbitrary initial time, we can
define a proper stochastic dynamical system (see Appendix A for details)
which, under natural conditions, admits a unique stationary version.
To this we refer frequently throughout the paper. 

We remark also that in 
Kozlova and Salminen \cite{KoSa} the situation in which $X$ is a general one-dimensional diffusion 
is analysed. Moreover, Sirvi\"o (n\'ee Kozlova) \cite{Si} studies the case where $L$ is constructed as the inverse 
of a general subordinator (without specialising the underlying process $X$). 

This paper follows ideas which were developed in \cite{Si} in the context of
reflection of the inverse of a subordinator. However, 
(1) it connects
the abstract framework with the case where the subordinator is the
local time of a reflected L\'evy process (motivated by applications in
priority processing systems)
(2) it uses, as much as possible, a framework based on Palm probabilities
(see, in particular, Section 4--Theorem \ref{distr0} and Section 5--Lemma 
\ref{pikk2}), and 
(3) explicitly discusses the possible types of sample path behaviour
of the process of interest ($Q$); for illustration, see Figures 1, 2 and 3:
Figure 1 concerns the case where $Q$ has continuous sample paths but
with parts which are singular with respect to
the Lebesgue measure.
Figure 2 concerns the case where $Q$ has paths with isolated 
discontinuities (positive jumps) and linear decrease between them.
Figure 3 concerns the case where $Q$ has absolutely continuous paths.
The cases are exhaustive.
Despite the wide variety of sample paths (depending on the type
of underlying L\'evy process $Y$), the mathematical framework and
formulae derived have a uniform appearance.

The paper is organised as follows. In Section \ref{sec2} we construct
the stationary version of the underlying (background) Markov process $X$.
In Section \ref{sec3} we construct
the stationary version of stochastic fluid queue with input the local
time of $X$, based on the stationary version of $X$.
In Section \ref{stationary-section} we derive the stationary distribution of the
buffer content and present a number of examples.
In Section \ref{sec5} we examine the idle and busy periods, and, in particular, characterise the distributions 
of their starting and ending times. 
The analysis is carried out first under the condition 
that the time at which the system is observed is a typical point of and
idle or a busy period.
Finally, the distributions of typical idle and busy periods are derived. 
%We then make some observations regarding two methods to describe
%the system via Palm calculus.

\section{The background Markov process and its local time}
\label{sec2}
We first construct the underlying Markov process $X$ which models
the highest priority class. This process will be taken to be the stationary
reflection of a spectrally one-sided L\'evy process
\[
Y = (Y_t, ~ t \in \R)
\]
with two-sided time and $Y_0=0$ (see Appendix B). A L\'evy process is called 
spectrally negative if its L\'evy measure $\Pi$ satisfies
$$
\Pi((-\infty,0))>0 \quad {\rm and}\quad   \Pi((0,+\infty))=0,
$$
and spectrally positive if
$$
\Pi((-\infty,0))=0 \quad {\rm and}\quad   \Pi((0,+\infty))>0.
$$
Clearly, if $Y$ is 
spectrally positive then $-Y$ is spectrally negative, and vice versa. To avoid 
trivialities we shall throughout assume that 
\[
\text{\em $|Y|$ does not have monotone paths}
\]
which rules out the cases that $Y$ is an increasing or decreasing subordinator.

We also
discuss the characteristics of its local time at $0$.
Appendix A summarises the notation and results on the Skorokhod reflection
problem and its stationary solution.
Appendix B summarises some facts on L\'evy processes with one sided jumps indexed by $\mathbb{R}$.
We will throughout denote by $P$ a probability measure which
is invariant under time shifts, and by $P_x$ the conditional probability
measure when $X_0=x$.
We define the Laplace exponent of a spectrally one-sided L\'evy process
as a function $\psi_Y : \R_+ \to \R$ given for $\theta\geq 0$ by
\[
\psi_Y(\theta) :=
\begin{cases}
\log E e^{\theta(Y_{t+1}-Y_t)}, &
\text{ if $Y$ is spectrally negative },\\
\log E e^{-\theta(Y_{t+1}-Y_t)}, &
\text{ if $Y$ is spectrally positive}.
\end{cases}
%\quad
%\theta \ge 0.
\]
Thus we insist that $\psi_Y$ be defined on $\R_+$, and define its right inverse 
\begin{equation}
\label{e1}
\Phi_Y(q) := \sup\{\theta \ge 0:~ \psi_Y(\theta) = q\}, \quad q \ge 0.
\end{equation}
We use also the notation 
$$
\overline{Y}_t:=\sup_{0\leq s\leq t} Y_s\quad{\rm and}\quad 
\underline{Y}_t:=\inf_{0\leq s\leq t} Y_s, 
\quad t \ge 0,
$$
and recall the duality lemma for L\'evy processes (see, e.g., Bertoin \cite[p.\ 45]{BER}):
\begin{equation}
\label{e2}
\{Y_t-Y_{(t-s)-}\,:\, 0\leq s\leq t\}  \eqdist \{Y_{s}\,:\, 0\leq s\leq t\},
\end{equation}
where $\eqdist$ means equality in distribution. Hence,
$$
\sup_{0\leq s\leq t}\left(Y_t-Y_{(t-s)-}\right)
\eqdist
\overline{Y}_t,%\sup_{0\leq s\leq t}\left(Y_s\right)
$$
which is equivalent with 
\begin{equation}
\label{e3}
Y_t -\underline{Y}_t
\eqdist
\overline{Y}_t.
%\sup_{0\leq s\leq t} Y_s.
\end{equation}
In this paper, we will mainly study the L\'evy process $Y$ with the time parameter taking values in the 
whole of $\R$ (see Appendix B). 
The Skorokhod reflection mapping associated with 
$\{Y_t\,:\, t\in\R\}$ is defined (see Lemma \ref{u1} in Appendix A)
via
$$
X_t := \widetilde \RR_t Y 
:= \sup_{-\infty < s \le t} (Y_t-Y_s), \quad t \in \R.
$$
In the remaining of this section we give conditions for the existence
of the stationary process\footnote{That this process is Markov is easy
to see due to the independence of the increments of $Y$.}
$\widetilde \RR Y=(\widetilde \RR_t Y, t \in \R)$, 
compute its marginal distribution, and define the local time process
$L$ of $\widetilde \RR Y$ at 0 which will be used for the construction of the fluid queue.
A few words about the definition of $L$ are in order.
We adopt the point of view the $L$ is a stationary
random measure on $(\R, \BB)$,
i.e.\
\[
L(s,s+t] = L(0,t] \comp \theta_s,
\quad t \ge 0, \quad s \in \R,
\]
where $\theta_s$ is the shift on the canonical space (see Appendix B),
which regenerates together with $X$ at each point $t$ at which $X_t=0$.
It is known that $L$ is a.s.\ continuous if and only if the point $x=0$
is regular for the closed interval
$(-\infty, 0]$ for the process $X$, and this is equivalent
to $\inf\{t > 0:~ Y_t \le 0\} =0$, $P_0$-a.s.
Furthermore, $L$ is a.s.\ absolutely continuous if and only if, in addition
to the above, the point $x=0$ is irregular for the open interval $(0,\infty)$
for the process $X$, and this is equivalent to 
$\inf\{t > 0:~ Y_t > 0\} >0$, $P_0$-a.s.
If $L$ is a.s.\ continuous then it is not difficult to attach a physical
meaning to it as a cumulative input process to a secondary queue.
For mathematical completeness, we shall also consider the case where
$L$ is a.s.\ discontinuous, in which case it can be shown to
have a discrete support. In the continuous case, $L$ can be defined
uniquely module a multiplicative constant. We shall make the normalization precise later. 
In the discontinuous case,
there is more freedom; however, insisting that its inverse be a subordinator,
we are left with only one choice. The discontinuous case
appears only once below and the construction of $L$ is discussed there.
In all cases, the support of the measure $L$ is the closure of the set
$\{t\in \R : X_t=0\}$.

Associated to the measure $(L(B), B \in \BB)$ we 
can define a cumulative local time process, denoted (abusing notation), by
the same letter, and given by:
\[
L_t :=
\begin{cases}
 L(0,t],&  t \ge 0,\\
-L(t,0], & t < 0.
\end{cases}
\]
The right-continuous inverse process is
\begin{equation}
\label{linv}
L^{-1}_x := 
\begin{cases}
\inf\{ t > 0:~ L(0,t] > x\} , & x \ge 0 \\
\sup\{ t < 0:~ L(t,0] < x\} , & x <  0.
\end{cases}
\end{equation}
In case $L$ is $P$-a.s.\ continuous, the process $(L^{-1}_x, x \in \R)$ has,
under $P_0$,
independent increments and a.s.\ increasing paths
(i.e.\ it is a, possibly killed, subordinator). 
This is an additional requirement that needs to be
imposed when $L$ is not $P$-a.s.\ continuous.

Throughout the paper, we denote by $P_x$ the probability $P$,
conditional on $X_0=x$.

\subsection{Stationary reflection of a spectrally negative L\'evy process}
\label{sn}
Suppose that the process $Y$ is a spectrally negative L\'evy process
with non-monotone paths; see expression \eqref{levyito}. 
%Let $\Pi$ be its
%L\'evy measure and $\psi_Y(\theta) = \log E e^{\theta Y_1}$, $\theta >0$,
%its characteristic exponent.
%\begin{equation}
%\label{ppp}
%\Pi(-\infty, 0) > 0.
%\end{equation}
%Note that $E_0 Y_1 = \psi_Y'(0+) \in [-\infty, +\infty)$.
%Consider the Skorokhod dynamical system
%$\RR_{s,t}^Y$, driven by $Y$.
%Directly from \eqref{sds} we have: 
%\begin{align}
%\RR_{0,t}^Y(0) &= Y_t - \underline{Y}_t 
%= Y_t - \inf_{0\le s \le t} Y_s.
%\label{ydown}
%\end{align}
%The following concerns the stationary version of the 
%reflected process.
\begin{proposition}
\label{stationary-distn}
Let  $Y=\{Y_t\,:\, t \in \R\}$ be a spectrally negative L\'evy process with
two-sided time. Assume that its Laplace exponent 
$\psi_Y(\theta) = \log E e^{\theta Y_1}$, $\theta >0$,  is
such that $\psi_Y'(0+) < 0$. Then 
$$
X=\{
X_t := \widetilde \RR_t Y\,:\, t \in \R
\}
$$
is the unique stationary solution of SDS (the Skorokhod dynamical system, 
see Appendix A) driven by $Y$.
The marginal distribution of $X$ is exponential with mean $1/\Phi_Y(0).$ 
\end{proposition}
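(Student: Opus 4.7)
The plan has three pieces: establishing a.s.\ finiteness of $X_t$, identifying its marginal law, and invoking Appendix A for existence and uniqueness of the stationary solution.

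First I would exploit the duality \eqref{e2} together with the two-sided time convention for $Y$. For fixed $t\in\R$, the path $\{Y_t-Y_{t-u}:u\ge 0\}$ is a spectrally negative L\'evy process started at $0$ with the same law as $\{Y_u:u\ge 0\}$, so
$$
X_t=\sup_{u\ge 0}(Y_t-Y_{t-u}) \eqdist \sup_{u\ge 0} Y_u =: M_\infty.
$$
The assumption $\psi_Y'(0+)<0$ gives $E Y_1<0$, hence $Y_u\to-\infty$ a.s.\ by the strong law of large numbers for L\'evy processes, so $M_\infty<\infty$ almost surely. Stationarity of the process $X$ is immediate from the stationarity of the increments of $Y$ under the two-sided time shift.

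Second, I would identify the distribution of $M_\infty$ via the classical first-passage identity for spectrally negative L\'evy processes: $P(M_\infty>x)=e^{-\Phi_Y(0)x}$ for $x\ge 0$. The quickest route is to apply optional stopping to the exponential martingale $e^{\Phi_Y(0)Y_t}$ (which has unit mean since $\psi_Y(\Phi_Y(0))=0$) at the first-passage time $T_x^+=\inf\{t:Y_t>x\}$. Because $Y$ has no positive jumps, $Y_{T_x^+}=x$ on $\{T_x^+<\infty\}$, and this creeping property gives $P(T_x^+<\infty)=e^{-\Phi_Y(0)x}$; since $\{M_\infty>x\}=\{T_x^+<\infty\}$, the marginal law of $X_t$ is exponential with mean $1/\Phi_Y(0)$.

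Third, existence and uniqueness of the stationary solution of the SDS is a direct consequence of Lemma \ref{u1} in Appendix A: the supremum defining $\widetilde\RR_t Y$ is finite for every $t$ (as just established), so $\widetilde\RR Y$ furnishes a well-defined stationary process satisfying the Skorokhod equations driven by $Y$, and the uniqueness statement of the SDS identifies it as the only such stationary solution. The Markov property is inherited from the independence of the increments of $Y$, as noted in the footnote.

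The main obstacle is the careful justification of the first-passage calculation: optional stopping must be applied at the truncated time $T_x^+\wedge n$, with a dominated-convergence argument using that the martingale is bounded by $e^{\Phi_Y(0)x}$ up to time $T_x^+\wedge n$ (since $Y$ creeps upward through $x$), and a monotone-convergence passage to $n\to\infty$. Everything else reduces to invoking either the duality identity \eqref{e2} or the abstract Skorokhod framework of Appendix A.
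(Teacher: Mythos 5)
Your argument is correct and takes essentially the same route as the paper: duality reduces $X_t$ to the all-time supremum of $Y$, whose exponential law with parameter $\Phi_Y(0)$ the paper simply cites from Bertoin/Kyprianou while you re-derive it by optional stopping of the exponential martingale $e^{\Phi_Y(0)Y_t}$ together with creeping. The only cosmetic difference is in the appeal to Appendix A: the paper gets existence and uniqueness of the stationary solution directly from Corollary \ref{u2} (ergodicity plus $EY_1=\psi_Y'(0+)<0$), which is the cleaner citation than Lemma \ref{u1}, though your verification of a.s.\ finiteness of the defining supremum supplies the substance needed there.
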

\proof
Since $E [Y_{t+1}-Y_t] = \psi'_Y(0+)<0$,
existence and uniqueness of the stationary solution 
is guaranteed by 
%Lemma \ref{u1} 
Corollary \ref{u2} of Appendix \ref{skororeview}. 
That $\Phi_Y(0)>0$ is a direct consequence of the definition of $\Phi_Y$
(see \eqref{e1}).
%of the function $\Phi_Y,$  . 
To derive the marginal distribution of $X$ consider
for $ \beta \ge 0$
\begin{eqnarray}
\label{stat_dist}
&&
\nonumber
E[e^{-\beta X_0}] 
= \lim_{t\to \infty} E_0[e^{-\beta (Y_t-\underline{Y}_t)}]
=\lim_{t\to \infty} E_0[e^{-\beta \overline{Y}_t}]
\end{eqnarray}
Since  $Y$  is spectrally negative its over all supremum, 
$\sup_{s\geq 0}Y_s,$ is exponentially distributed with mean $1/\Phi_Y(0)$
(see, e.g., Bertoin \cite[p.\ 190]{BER} p. 190 or Kyprianou  \cite[p.\ 85]{K}). 
\qed

%Note that the process \eqref{ydown} is Markovian. 
%In fact, $(X_t, t \in \R)$
%is the (2-sided) stationary version of $(\RR_{0,t}^Y(x), t \ge 0)$, and
%\[
%(X_t, t  \ge 0) \eqdist (\RR_{0,t}^Y(x), t \ge 0), 
%\quad \text{under $P_x$, for all $x \ge 0$}.
%\]

%We define the local time $L$ of $X=(X_t, t \in \R)$ at the point $x=0$
%as an additive functional with nonnegative increments; 
%if we denote the increment on $(s,t]$ by $L(s,t]$, stationarity means 
%\[
%L(s,s+t] = L(0,t] \comp \theta_s, \quad t \ge 0, \quad s \in \R,
%\]
%where $\theta_t$ is defined on the canonical space (see Appendix B).
%It is only the increments that make sense. We may arbitrarily
%set $L_0=0$ and define (with a slight abuse of notation)
%the process $(L_t, t \in \R)$
%by 
%\[
%L_t =
%\begin{cases}
 %L(0,t],&  t \ge 0,\\
%-L(t,0], & t < 0,
%\end{cases}
%\]
%so that $L(s,t] = L_t-L_s$ for all $-\infty < s \le t < \infty$.
%
%The process $(L_t, t \in \R)$ does not necessarily have continuous sample paths.

In view of Proposition \ref{stationary-distn}, we assume that
\[
\psi'_Y(0+) \in [-\infty, 0),
\]
which is equivalent to 
\[
\Phi_Y(0) > 0.
\]

It is easily seen that, for a non-monotone spectrally negative $Y$,
a necessary and sufficient condition 
for continuity of $L$ is that $Y$ has unbounded variation paths.
This is further equivalent to: 
$\sigma >0$ or $\int_{-1}^0 |y| \Pi(dy) = \infty$. 

%Moreover, in this case, $L_t=-\underline{Y}_t$ if $X_0=0.$  {\color{red} ?} ABSOLUTELY NOT!

In the alternative case, when the paths of $Y$ are of bounded variation,
the number of visits of $Y$ to its running infimum forms
a discrete set. So $n(s,t] := \sum_{u \in \R} \1(X_u=0)$ is
finite for all $-\infty <s < t<\infty$. Let $n_t := n(0,t]$ if $t \ge 0$,
and $n_t :=-n(t,0]$ if $t < 0$, and let $(\ee_j, j \in \Z)$ be a collection
of i.i.d.\ exponentials with mean $1$, independent of $Y$.
We adopt the following construction for $L$.
%\marginpar{\tiny\sc locally finite? no way!}
%In this case, we can construct a local time with \cadlag paths that
%regenerates at the points of this discrete set. The
%standard construction takes the form
\[
L(s,t] = \sum_{n_s < i \le n_t} \ee_i, \quad -\infty <s \le t<\infty.
\]

In both cases, the process 
$(L^{-1}_x, x \in \R)$ is a subordinator under $P_0$, with
$L^{-1}_0 =0$, $P_0$-a.s.
If $L$ is continuous, this property is immediate from the definition of $L$.
If $L$ is discontinuous, $L^{-1}$ has independent increments due to our choice
of the exponential jumps of $L$.
Since $\psi'_Y(0+) < 0$, we have $Y_t\to \mp \infty$, 
as $t \to \pm \infty$, $P$-a.s., and this implies that $L_t \to \pm \infty$,
as $t \to \pm \infty$, $P$-a.s. Thus, it is not possible for $L_x^{-1}$ to
explode for finite $x$.

%A fundamental property of $(L_t, t \ge 0)$ is that its inverse process
%\begin{equation}
%\label{inverseloc}
%L^{-1}_x:= \inf\{t\geq 0: L_t>x\}, \quad x \ge 0,
%\end{equation}
%is a pure jump subordinator (possibly killed at an independent exponentially
%distributed time if the point zero is transient for the underlying
%Markov process).  
Regardless of the continuity of the paths of $L$, we always have the following:

\begin{proposition}%{lemma}
\label{loc-}
Let $Y$ and $X$ be as in Proposition \ref{stationary-distn} and % spectrally negative with nontrivial L\'evy measure, and
$L$ the local time at $0$ of $X.$%$\{X_t := \widetilde \RR_t Y\,:\,t \in \R\}$.
Then the local time $L$ can be normalized to satisfy  
\begin{equation}
\label{ltau0}
E_0 \big[ e^{-q L^{-1}_x} \big] = e^{- x q/\Phi_Y(q)},
\quad q \ge 0,
\end{equation}
and, moreover, 
\begin{equation}
E_0 L_x^{-1} = x/{\Phi_Y(0)}, \quad x \in \R, \qquad
E L_t=t\,\Phi_Y(0), \quad t \in \R.
\label{lstat1}
\end{equation}
%\in (0, \infty).$
%{\color{red} have omitted the condition $\Phi_Y(0) > 0$ since this is implied  by
%$\psi_Y'(0+) < 0$ ? --YES!}
% \frac{x}{\Phi_Y(0)}
%Note that the quantity $\Phi_Y(0)>0$ because $\psi_Y(0+)<0$.
\end{proposition}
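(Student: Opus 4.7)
The plan is to derive the Laplace-transform identity \eqref{ltau0} from the Wiener--Hopf factorisation of $Y$ and then deduce the moment identities \eqref{lstat1} by differentiation and Palm inversion.

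First I would identify $L^{-1}$ with (a multiple of) the descending inverse local time of $Y$. Under $P_0$ the Markov property gives $X_t = Y_t - \inf_{0\le s\le t} Y_s$ for $t \ge 0$, so the zero set of $X$ is precisely the set of times at which $Y$ attains a new infimum; hence $L$ is, up to the free multiplicative constant in the unbounded-variation case (respectively the scale of the exp$(1)$ marks in the bounded-variation case), the descending-ladder local time of $Y$, and $L^{-1}$ is the corresponding subordinator under $P_0$.

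Next I would invoke the Wiener--Hopf factorisation
\[
q - \psi_Y(\alpha) = \kappa(q,-\alpha)\,\hat\kappa(q,\alpha), \qquad \alpha \ge 0,
\]
in which $\kappa,\hat\kappa$ are the bivariate Laplace exponents of the ascending and descending ladder processes of $Y$. Because $Y$ has no positive jumps it creeps upward, and with the ascending normalisation in which the ascending local time equals $\overline Y$ the ladder height process is the identity, so $\kappa(q,\beta) = \Phi_Y(q) + \beta$. Solving and setting $\alpha = 0$ produces $\hat\kappa(q,0) = q/\Phi_Y(q)$; fixing the remaining normalisation of $L$ so that $L^{-1}$ has precisely this Laplace exponent delivers \eqref{ltau0}. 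In the bounded-variation case one may also verify the exponent directly from the construction: $L^{-1}$ is then a compound Poisson subordinator of unit rate whose jump distribution is the law of the first return of $X$ to $0$ under $P_0$ (equivalently, the first passage of $Y$ strictly below $0$), whose Laplace transform works out to $(\Phi_Y(q)-q)/\Phi_Y(q)$, producing $1-(\Phi_Y(q)-q)/\Phi_Y(q) = q/\Phi_Y(q)$ once more.

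For \eqref{lstat1}, differentiating \eqref{ltau0} at $q=0$ and using $\Phi_Y(0)>0$ gives
\[
E_0 L^{-1}_x = x\,\left.\frac{d}{dq}\right|_{q=0}\frac{q}{\Phi_Y(q)} = \frac{x}{\Phi_Y(0)}.
\]
The companion identity $E L_t = t\,\Phi_Y(0)$ is then an instance of Palm inversion: $L$ is a stationary random measure on $\R$ with intensity $\lambda$, and $\lambda = 1/E_0 L^{-1}_1 = \Phi_Y(0)$, so $E L_t = \lambda t$. The main obstacle is pinning down the normalisation of $L$ uniformly across the two regimes; in the unbounded-variation (continuous) case this is a single multiplicative scalar fixed by the Wiener--Hopf identity, while in the bounded-variation case it requires the explicit exp$(1)$ construction of \emph{this section} together with a separate first-passage computation for $Y$ to confirm that the same Laplace exponent $q/\Phi_Y(q)$ is obtained.
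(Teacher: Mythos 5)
Your argument is correct and essentially the paper's own: the paper also obtains \eqref{ltau0} from the Wiener--Hopf factorisation for a spectrally negative process (the descending ladder exponent $\widehat\kappa(\alpha,\beta)=(\alpha-\psi_Y(\beta))/(\Phi_Y(\alpha)-\beta)$ evaluated at $\beta=0$), and then gets \eqref{lstat1} by differentiation plus a law-of-large-numbers/stationarity argument that amounts to your identity $\lambda=1/E_0L_1^{-1}=\Phi_Y(0)$. One small caveat in your bounded-variation cross-check: since $W^{(q)}(0)=1/|d_Y|$, the first-passage transform is $1-q/(|d_Y|\Phi_Y(q))$ rather than $(\Phi_Y(q)-q)/\Phi_Y(q)$, so the exponent $q/\Phi_Y(q)$ emerges only after rescaling the exponential marks by the drift --- which is precisely the normalisation freedom both you and the proposition invoke.
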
%{lemma}
\proof
For the Laplace exponent of  $L^{-1}_x$ in (\ref{ltau0}), we refer to
Bingham \cite[p.\ 731]{BIN} 
(a result due to Fristedt \cite{Fri}) 
and Kyprianou \cite{K} and Kyprianou and Palmowski \cite{KP}:
The ``ladder process'' $(L_x^{-1}, X_{L_x^{-1}}), x \ge 0)$
is a  L\'evy process with values in $\R^2_+$ and Laplace exponent
\[
\widehat \kappa(\alpha,\beta) = \log E_0 \big[e^{-\alpha L_1^{-1}
-\beta X_{L_1^{-1}}}\big] = \frac{\alpha-\psi_Y(\beta)}{\Phi_Y(\alpha)-\beta}
\]
obtained by Wiener-Hopf factorisation for a spectrally negative process;
see Bertoin \cite[p.\ 191, Thm.\ 4]{BER}.
Setting $\beta=0$ we obtain 
$E_0[e^{-\alpha L_1^{-1}}] = e^{-\alpha/\Phi_Y(\alpha)}$, 
as claimed.
{From} this, we obtain $E_0 L_x^{-1} = x/\Phi_Y(0)$, by differentiation.
Using the strong law of large numbers, we have $\lim_{x \to \infty}L^{-1}_x/x
= 1/\Phi_Y(0)$, $P_0$-a.s., and, given that $(L^{-1}_x, x > 0)$ 
is the right-continuous inverse function of 
$(L_t, t > 0)$--see \eqref{linv}, we have 
$\lim_{t \to \infty} L_t/t = \Phi_Y(0)$, $P_0$-a.s., and $P$-a.s.
Since $L$ is a stationary random measure, we have $EL_t=C\,t$ for some constant $C.$ Hence, we immediately have that 
$C=\Phi_Y(0)$, and this proves \eqref{lstat1}.
\qed

We shall later need the $P$-distribution of the random variable
%To compute the mean of
%$L_t$ under the stationary measure, define 
\begin{equation}
\label{D}
D:=\inf\{t>0: \, X_t=0\}.
\end{equation}
Since $X_0$ is exponential with rate $\Phi_Y(0)$, we have $P(X_0>0)=1$.
So if $0 \le t < D$, we have $X_t=X_0+Y_t$, $P$-a.s.
Therefore
\[
D = \inf\{ t > 0:~ Y_t < -X_0\}, \quad P-\text{a.s.}
\]
%Then on $\{X_0=x\},\, x>0$,
Let
$$
\tau_{-x}:=\inf\{t>0: Y_t < -x\}.
$$
  Therefore,
\begin{eqnarray} 
\label{pikk6}
\nonumber
    E[e^{-\theta D}]&=& \int_0^\infty P(X_0 \in dx)\, E[e^{-\theta
      \tau_{-x}}]\\ 
\nonumber
%&=& \int_0^\infty P(X_0 \in dx)\, E_x [e^{-\theta \tau_0}] 
%    \nonumber \\
    &=& \int_0^\infty \Phi_Y(0) \, e^{-\Phi_Y(0) x} \, 
\left(Z^{(\theta)}(x) - \frac{\theta}{\Phi_Y(\theta)}W^{(\theta)}(x)\right)
\, dx
    \nonumber \\
    &=& \Phi_Y(0) \, \frac{\psi_Y(\Phi_Y(0)) \Phi_Y(\theta) - \theta \Phi_Y(0)}
    {\Phi_Y(\theta) ~ \Phi_Y(0) ~ [\psi_Y(\Phi_Y(0)) - \theta]} 
\nonumber
\\ 
&=&
\frac{\Phi_Y(0)}{\Phi_Y(\theta)},
\end{eqnarray}
where the overshoot formula (\ref{tau-}) given in the Appendix B (see also Bingham \cite[p.\ 732]{BIN}) and the Laplace transforms 
\eqref{LT}, \eqref{Ztransform}, for the scale functions $W^{(\theta)}$, 
$Z^{(\theta)}$, were used.
%Let now $\xi$ be an exponentially distributed random variable (with parameter $\theta$) independent of $X$.
%  Then, using $(\ref{pikk6})$ and  (\ref{ltau0}), we have
%  \begin{eqnarray*}
%     E L_\xi &=& \int_0^\infty P(L_\xi > x) \, dx = \int_0^\infty P(L_x^{-1} \leq \xi)\, dx \\
%     &=& \int_0^\infty E [e^{-\theta L_x^{-1}}] \, dx = 
%     \int_0^\infty E[e^{-\theta D}]\, E_0 [e^{-\theta L_x^{-1}}]\, dx \\
%&=&
%\frac{\Phi_Y(0)}{\Phi_Y(\theta)} \int_0^\infty e^{-x
%    \theta / \Phi_Y(\theta)} dx 
%\\
%  &=& \frac{\Phi_Y(0)}{\theta}.
%\end{eqnarray*}     
%Hence, the claim follows by the uniqueness of Laplace transforms. 
%$E L_t = \Phi_Y(0)\, t.$ %For $t<0$, $E L_t = - E L_{-t} = \Phi_Y(0)\, t$.
%\end{proof}

%\begin{corollary}
%\label{snc}
%If $\Phi_Y(0) > 0$ then $E_0 L_x^{-1} = \frac{x}{\Phi_Y(0)}
%\in (0, \infty)$ and $E_0 L_t  = t \Phi_Y(0) \in (0, \infty)$.
%\end{corollary}

%\begin{remark}
%In Bingham \cite{BIN} Section 5 the setting is 
%\begin{itemize}
%\item $Z$ is a spectrally negative its Laplace exponent is $\psi$ (the sign convention is as in the definition of $Y$ above),
%\item $\widehat Z:=-Z$ is then spectrally positive, 
%\item reflection $g(\widehat Z)$ is defined as $\sup_{s\leq t}\widehat Z_s -\widehat Z_t,$ and we have 
%$$
%g(\widehat Z)_t=\sup_{s\leq t}(- Z_s) +Z_t = Z_t-\inf_{s\leq t}Z_s,
%$$ 
%and, hence, this construction coincides with $\widetilde \RR_t Z.$ 
%\end{itemize}
%\end{remark}
%In the forth coming analysis, we allow
%for the use of this local time, as an input process to a fluid queue, 
%despite the fact
%that its paths are not continuous.  

\subsection{Stationary reflection of a spectrally positive L\'evy process}
\label{sp}
We can repeat the construction in the subsection above for  a 
spectrally positive L\'evy process $Y$ with non-monotone paths.
We shall be using the formulae of Appendix B with $-Y$ in place of $Y$.

\begin{proposition}%{lemma}
\label{stationary-distn+}
Let $Y=\{Y_t\,:\, t \in \R\}$ be a spectrally positive L\'evy process with
two sided time and Laplace exponent $\psi_Y$. 
Assume that its Laplace exponent $\psi_Y(\theta) = \log E e^{-\theta Y_1}$
is such that $\psi_Y'(0+)>0$. Then 
the process $\{X_t := \widetilde \RR_t Y\,:\,   t \in \R\}$
%:= \sup_{-\infty < s \le t} (Y_t-Y_s), \quad t \in \R,\]
is the unique stationary solution to the SDS driven by $Y$.
The stationary distribution of $X$ is given for $\beta\geq 0$ by 
\begin{eqnarray}
\label{ZOLO}
&&
\nonumber
E[e^{-\beta X_0}] 
= \lim_{t\to \infty} E_0[e^{-\beta (Y_t-\underline{Y}_t)}]
=\lim_{t\to \infty} E_0[e^{-\beta \overline{Y}_t)}]
\\
&&\hskip1.65cm 
= \psi_Y'(0+) \frac{\beta}{\psi_Y(\beta)}.
\end{eqnarray}
\end{proposition}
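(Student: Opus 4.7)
The plan is to mirror the argument of Proposition \ref{stationary-distn} with the roles of supremum and infimum exchanged via the sign flip $Y \mapsto -Y$. Since the convention $\psi_Y(\theta) = \log E e^{-\theta Y_1}$ is used for spectrally positive $Y$, differentiation at $0$ yields $E[Y_{t+1}-Y_t] = -\psi_Y'(0+) < 0$. Corollary \ref{u2} of Appendix \ref{skororeview} then immediately provides existence and uniqueness of the stationary solution of the SDS driven by $Y$, and identifies it with $X = \widetilde\RR Y$.

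For the marginal distribution, I would reproduce verbatim the two opening equalities in \eqref{ZOLO}: by stationarity and the representation of $\widetilde\RR_t Y$ as the limit (as $s\downarrow -\infty$) of $Y_t - \inf_{s\le u\le t} Y_u$, one obtains
\[
E[e^{-\beta X_0}] = \lim_{t\to\infty} E_0[e^{-\beta(Y_t - \underline{Y}_t)}],
\]
and the duality identity \eqref{e3} then supplies the second equality. Since $Y$ drifts to $-\infty$, the running supremum $\overline{Y}_t \nearrow \overline{Y}_\infty < \infty$ almost surely, and bounded convergence identifies this last limit with $E[e^{-\beta \overline{Y}_\infty}]$.

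The only nonroutine step is to evaluate this Laplace transform. I would pass to the dual $\tilde Y := -Y$, which is spectrally negative with Laplace exponent (in the spectrally negative convention of the paper) $\psi_{\tilde Y}(\theta) = \log E e^{\theta \tilde Y_1} = \psi_Y(\theta)$ and $\psi'_{\tilde Y}(0+)>0$, hence $\Phi_{\tilde Y}(0)=0$ and $\tilde Y_t \to +\infty$. The Wiener--Hopf factorisation for a spectrally negative L\'evy process drifting to $+\infty$ (see, e.g., Bertoin \cite[Ch.\ VII]{BER} or Kyprianou \cite[Ch.\ 8]{K}) then gives
\[
E\big[e^{\beta \underline{\tilde Y}_\infty}\big] = \psi'_{\tilde Y}(0+)\,\frac{\beta}{\psi_{\tilde Y}(\beta)}, \qquad \beta \ge 0,
\]
and since $\overline{Y}_\infty = -\underline{\tilde Y}_\infty$, substitution yields the right-hand side of \eqref{ZOLO}.

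The main, and really only, obstacle is bookkeeping of sign conventions: one must confirm that the two different conventions for the Laplace exponent, namely $\psi_Y(\theta) = \log E e^{-\theta Y_1}$ for spectrally positive $Y$ and $\psi_{\tilde Y}(\theta) = \log E e^{\theta \tilde Y_1}$ for spectrally negative $\tilde Y$, produce the same function after the substitution $\tilde Y = -Y$, so that the Wiener--Hopf identity may be quoted without a sign change and so that $\Phi_{\tilde Y}$ is the same object as the right inverse defined in \eqref{e1}. Once this is aligned, no new analytic input is needed beyond what was already used in Proposition \ref{stationary-distn}.
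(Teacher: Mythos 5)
Your proposal is correct and follows essentially the same route as the paper: reduce to the spectrally negative process $-Y$ (drifting to $+\infty$) and identify the limit law of the reflected process with the law of its all-time infimum, whose Laplace transform $\psi_Y'(0+)\beta/\psi_Y(\beta)$ is the classical formula. The only difference is bibliographical — the paper quotes this fact as Zolotarev's result (via Bingham), while you derive it from the Wiener--Hopf factorisation as in Bertoin/Kyprianou — which is the same classical identity.
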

\begin{proof}
Notice that in this case, by the assumption on $\psi_Y,$ 
$$
E [Y_{t+1}-Y_t] =- \psi'_Y(0+)<0,
$$
and, hence, $Y$ drifts to $-\infty.$
Let $Z:=-Y.$ Clearly, $Z$ is spectrally negative,
$$
X_t=Y_t-\underline{Y}_t = \overline{Z}_t-Z_t,
$$
and $Z$ drifts to $+\infty.$ The classical result
due to Zolotarev \cite{zolotarev64} (see also Bingham \cite{BIN} Proposition 5 p. 725) 
says that the stationary distribution of $X$ is as given in (\ref{ZOLO}). 
\end{proof}

We shall therefore assume that
\[
\psi'_Y(0+) \in (0, \infty).
\]
Hence $\Phi_Y = \psi_Y^{-1}$ and so 
\[
\Phi_Y'(0+) = 1/\psi_Y'(0+) \in (0, \infty).
\]

It is easily seen that, starting from $0$, the process $Y$ hits $(-\infty, 0]$
immediately, $P$-a.s., and this ensures continuity of the local time $L$.
Moreover, we may and do normalize $L$ so that  
%up to a multiplicative constant (taken to be 1), we have
\begin{equation}
L(s,t]= -\inf_{ s< u \leq t } Y(s,u].% : -\inf_{0 < s \le t} Y_s.
\label{convention}
\end{equation}
The continuity of $L$ implies that
\begin{equation}
\label{ltau}
\{L_x^{-1}\,:\, x \ge 0\}=\{\tau_{-x}\,:\, x \ge 0\},
\end{equation}
where $\tau_{-x}:=\inf\{t >0:Y_t<-x\}=\inf\{t >0:Z_t>x\}.$ %(See Appendix B).
Note that $(L_x^{-1}, x \in \R)$ is a subordinator under $P_0$, with
$L^{-1}_0=0$, $P_0$-a.s. Furthermore, since $Y_t$ drifts to $\mp\infty$
as $t \to \pm\infty$, $L^{-1}$ is proper (not killed).

Let us briefly comment on the special case where,
starting from $X_0=0$, 
the interval $(0, \infty)$ will be first visited by $X$ at 
an a.s.\ positive time.
It is known \cite[Ch.\ 7]{BER} that this occurs if and only if
$Y$ has bounded variation, i.e.\
\begin{equation}
\label{Ybv}
Y(s,t] = d_Y (t-s) + \int_{(s,t]}\int_{(0, \infty)}
y ~ \eta(du,dy),
\end{equation}
where $d_Y$ is the drift, and $\int_0^1 y~\Pi(dy) < \infty$. 
Since we exclude the case where $Y$ is monotone, we must
have $d_Y < 0$.
%\[
%Y_t = d_Y t - S_t
%\]
%where $\{S_t : t\geq 0\}$ is a pure jump subordinator and $d_Y>0$.
In that case, with (\ref{convention}) as the definition
of $L$, it is known that for all $s \le t$,
\begin{equation}
L(s,t] = |d_Y| \int_s^t \1(X_u=0) du.
\label{abs-cts}
\end{equation}

A rewording of the first part of Lemma \ref{one-sided-exit} in Appendix B  gives the first part of the following proposition.
\begin{proposition}%lemma}
\label{loc+}
Let $Y$ be as in Proposition \ref{stationary-distn+} and $L$ the local time at $0$
of $X.$ %$\{X_t := \widetilde \RR_t Y$, $t \in \R\}.$ 
Then
\begin{equation}
\label{ltau1}
E_0[e^{-q L_{x}^{-1}}] = e^{-\Phi_Y(q)x}, \quad q \ge 0.
\end{equation}
Moreover, 
\begin{equation}
E_0 L_x^{-1} = x \Phi_Y'(0+), \quad x \ge 0,
\qquad
\label{lstat2}
E L_t =
t\, \psi_Y'(0+),\quad t \ge 0.
\end{equation}
%Note that $\Phi'(0+) = 1/\psi'(0+)\in(0,\infty)$ on account of the fact that $\psi'(0+)>0$.
\end{proposition}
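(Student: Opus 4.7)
The Laplace transform in (\ref{ltau1}) follows almost immediately from Lemma \ref{one-sided-exit}. Under the normalization (\ref{convention}) the identification $L_x^{-1} = \tau_{-x}$ recorded in (\ref{ltau}) holds, and since $Y$ is spectrally positive the downward passage across level $-x$ is $P_0$-a.s.\ continuous, so Lemma \ref{one-sided-exit} yields $E_0[e^{-q\tau_{-x}}] = e^{-\Phi_Y(q) x}$ for all $q, x \ge 0$. Equivalently, $(L_x^{-1}, x \ge 0)$ is a (proper) subordinator under $P_0$ with Laplace exponent $\Phi_Y$, as the hypothesis $\psi_Y'(0+) > 0$ (hence $Y_t \to -\infty$ as $t \to \infty$) rules out killing.

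For the mean $E_0 L_x^{-1}$, I would differentiate the Laplace transform at $q = 0+$. The hypothesis $\psi_Y'(0+) > 0$ together with $\psi_Y(0) = 0$ gives $\Phi_Y(0) = 0$, so
\[
E_0 L_x^{-1} = -\frac{d}{dq} e^{-\Phi_Y(q) x} \bigg|_{q = 0+} = x\, \Phi_Y'(0+),
\]
and implicit differentiation of $\Phi_Y(\psi_Y(\theta)) = \theta$ at $\theta = 0$ gives $\Phi_Y'(0+) = 1/\psi_Y'(0+) \in (0, \infty)$, as already noted in the paragraph preceding \eqref{convention}.

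The second identity in (\ref{lstat2}) is then obtained by repeating the final step of the proof of Proposition \ref{loc-} verbatim. The strong law of large numbers applied to the subordinator $(L_x^{-1})_{x \ge 0}$, whose mean per unit of $x$ is now known to be $\Phi_Y'(0+) < \infty$, gives $L_x^{-1}/x \to \Phi_Y'(0+)$ $P_0$-a.s. Inverting (using that in this setting $L$ is continuous, so that $(L_t,\, t \ge 0)$ is the right-continuous inverse of $(L_x^{-1},\, x \ge 0)$) yields $L_t/t \to 1/\Phi_Y'(0+) = \psi_Y'(0+)$ $P_0$-a.s., and hence also $P$-a.s. Since $L$ is a stationary random measure, $EL_t = Ct$ for some constant $C$, which must coincide with the almost-sure limit, whence $C = \psi_Y'(0+)$.

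Overall I expect no real obstacle: Lemma \ref{one-sided-exit} supplies (\ref{ltau1}) essentially as a rewording, and the mean identities are just differentiation plus a line-for-line copy of the SLLN-and-stationarity argument of Proposition \ref{loc-}. The only mildly delicate moment is the final identification of the proportionality constant $C$ in $EL_t = Ct$ with the $P$-a.s.\ limit of $L_t/t$; this is, however, exactly the same small point that was already passed over without fanfare in Proposition \ref{loc-}, and the same implicit justification applies.
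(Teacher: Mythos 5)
Your proposal is correct and follows essentially the same route as the paper: formula \eqref{ltau1} is obtained from the identification \eqref{ltau} together with the first-passage (creeping) formula \eqref{tau+} of Lemma \ref{one-sided-exit}, the first part of \eqref{lstat2} by differentiation at $q=0+$, and the second part by the same SLLN/stationarity argument used for \eqref{lstat1} in Proposition \ref{loc-}. The small point about identifying the constant $C$ with the almost-sure limit is treated in the paper exactly as loosely as you treat it, so there is nothing to add.
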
%lemma}{lemma} 
%\begin{corollary}
%\label{spc}
%If $\psi_Y'(0+)>0$ then $E_0 L_x^{-1} = x \Phi_Y'(0+)
%\in (0, \infty)$ and $E_0 L_t = t \psi_Y'(0+) \in (0, \infty)$.
%\end{corollary}
\proof 
Formula (\ref{ltau1}) follows from (\ref{ltau}) and the
well known characterisation of the distribution of the first hitting
time $\tau^+_x,$ see, e.g., Bingham \cite{BIN} p. 720. 
By differentiating \eqref{ltau1}, we obtain the first part of 
\eqref{lstat2}, and using an ergodic argument--as 
in the proof of the second part of \eqref{lstat1}--we obtain
the second part of \eqref{lstat2}.
\qed

%To compute the
%mean of $L_t$ under the stationary measure we proceed as in the proof
%of Proposition \ref{loc-}. Firstly, if $X_0=x>0$ it holds that  
%$D$ is identical in law with $\tau_{-x}.$ Therefore, under the
%stationary measure we have 
We now compute the $P$-distribution of $D=\inf\{t>0:~X_t=0\}$,
by arguing as earlier: 
we have $D = \inf\{t > 0: -Y_t > X_0\}$, and, since $-Y$ is spectrally negative,
we use the hitting time formula \eqref{tau+} of Appendix B to obtain
\begin{align}
E[e^{-\theta D}] = E[ e^{-\Phi_Y(\theta) X_0}]
= \psi_Y'(0+)~\frac{\Phi_Y(\theta)}{\psi_Y(\Phi_Y(\theta))}
= \psi_Y'(0+)~\frac{\Phi_Y(\theta)}{\theta},
\label{pikk1}
\end{align}
%\begin{align}
%E[e^{-\theta D}] &= E[ e^{-\Phi_Y(\theta) X_0}]
%\nonumber
%\\
%&= \psi_Y'(0+)~\frac{\Phi_Y(\theta)}{\psi_Y(\Phi_Y(\theta))}
%\nonumber
%\\
%&= \psi_Y'(0+)~\frac{\Phi_Y(\theta)}{\theta},
%\label{pikk1}
%\end{align}
where we also used \eqref{ZOLO} and the fact that $\Phi_Y \comp \psi_Y$ is
the identity.

%we have
%  \begin{eqnarray} \label{pikk1}
%    E[e^{-\theta D}]&=& \int_0^\infty P(X_0 \in dx)\, E_0[e^{-\theta \tau_{-x}}]
%    = \int_0^\infty P(X_0 \in dx) \, e^{-\Phi_Y(\theta)x} \nonumber\\
%    &=& E[e^{-\Phi_Y(\theta) X_0}] = \psi_Y^{'}(0+)  \frac{\Phi_Y(\theta)}{\theta}.
%  \end{eqnarray} 
%Secondly, letting $\xi$ be an exponentially distributed random
%variable (with parameter $\theta$) 
%independent of $X$ we obtain usng $(\ref{pikk1})$ and $\ref{ltau1}$, 
%  \begin{eqnarray*}
%     E L_\xi &=& \int_0^\infty P(L_\xi > x) \, dx = \int_0^\infty P(L_x^{-1} \leq \xi)\, dx \\
%     &=& \int_0^\infty E [e^{-\theta L_x^{-1}}] \, dx = 
%     \int_0^\infty E[e^{-\theta D}]\, E_0 [e^{-\theta L_x^{-1}}]\, dx \\
%     &=& \psi_Y^{'}(0+) \frac{\Phi_Y(\theta)}{\theta} \int_0^\infty
%     e^{-\Phi_Y(\theta) x}\, dx = \frac{\psi_Y^{'}(0+)}{\theta},
%     \end{eqnarray*}
%and, by the uniqueness of Laplace transforms, this completes the proof.
%%$E L_t = \psi_Y^{'}(0+)\, t$, $t>0$. For $t<0$, $E L_t = - E L_{-t} = \psi_Y^{'}(0+)\, t$.
%

\section{Construction of (the stationary version of)
the fluid queue with local time input}
\label{sec3}
We wish to construct a fluid queue driven by
\[
\widehat L(s,t] = L(s,t]-(t-s),
\]
where $L$ is the local time at zero of the Markov process $X$.
The process $X$ is a stationary Markov process which
is the reflection of a spectrally negative (Section \ref{sn}) 
or a spectrally positive (Section \ref{sp}) L\'evy process.
In either case, $L$ is a stationary random measure
with rate (see (\ref{lstat1}) and (\ref{lstat2})) 
%(see Corollaries \ref{snc} and \ref{spc})
\begin{equation}
\label{ltrate}
\mu = EL(s,s+1] 
=
\begin{cases}
\Phi_Y(0), & \text{ if $Y$ is spectrally negative}\\
\psi_Y'(0+), & \text{ if $Y$ is spectrally positive}
\end{cases}.
\end{equation}
The fluid queue started from level $x$ at time $0$ is, as explained
in Appendix \ref{skororeview}, the process 
\[
(\RR_{0,t}\widehat L(x),~ t \ge 0).
\]
{From} Corollary \ref{u2} in Appendix \ref{skororeview} we immediately have:
\begin{theorem}
\label{qqq}
If $\mu < 1$ there is a unique stationary version of the fluid
queue driven by $\widehat L$ and is given by
\begin{equation}
\label{qqqeq}
Q_t = \widetilde \RR_t \widehat L = \sup_{-\infty < u \le t}
(\widehat L_t-\widehat L_u), \quad t \in \R.
\end{equation}
\end{theorem}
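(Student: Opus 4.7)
The plan is to view this theorem as a direct application of the general Skorokhod-dynamical-system existence/uniqueness result, namely Corollary \ref{u2} of Appendix \ref{skororeview}, to the stationary-increment driving process $\widehat L$. The only non-trivial content is verifying the hypotheses of that corollary for $\widehat L$; once this is done the representation \eqref{qqqeq} is immediate from the standard two-sided Skorokhod reflection formula.

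First I would establish that $\widehat L_t := L_t - t$ (with $L_t$ extended to the real line as in \eqref{linv}-surrounding discussion) inherits stationary increments from $L$. Since $L$ is a stationary random measure on $(\R,\BB)$ under $P$, and the deterministic measure $dt$ is obviously shift-invariant, the increment process $\widehat L(s,t] = L(s,t] - (t-s)$ is stationary under the shifts $\theta_s$. Its per-unit-time mean is $E\widehat L(0,1] = \mu - 1$, which is strictly negative by the standing assumption $\mu < 1$, where $\mu$ is given by \eqref{ltrate}.

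Next I would check that $\widehat L_t \to -\infty$ as $t \to \infty$ almost surely, which is what ensures that the supremum in \eqref{qqqeq} is a.s.\ finite (and hence that the Skorokhod reflection problem on the whole line has a well-defined solution). This is a consequence of the ergodic theorem for stationary random measures: $L_t/t \to \mu$ a.s., as already observed in the proofs of Propositions \ref{loc-} and \ref{loc+} via the inversion $L^{-1}_x/x \to 1/\mu$, so $\widehat L_t/t \to \mu-1 <0$ a.s., giving $\widehat L_t \to -\infty$. By an analogous argument for the reversed time direction (using stationarity of the increments), the relevant one-sided infima and suprema needed to invoke Corollary \ref{u2} are a.s.\ finite.

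With these two ingredients in place, Corollary \ref{u2} applies to the driving process $\widehat L$ and yields both the existence of a unique stationary solution and the explicit representation $Q_t = \widetilde\RR_t \widehat L = \sup_{-\infty < u \le t}(\widehat L_t - \widehat L_u)$, which is \eqref{qqqeq}. The main (and essentially only) obstacle is to be careful that the hypotheses of Corollary \ref{u2} are verbatim satisfied by $\widehat L$: it need not be a L\'evy process or even have independent increments, but since the Skorokhod map is a pathwise, measurable construction, the general stationary-increments framework of Appendix A is exactly what is needed, and no finer structural property of $\widehat L$ beyond stationarity of increments and negative mean drift is required.
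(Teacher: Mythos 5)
Your proposal is correct and follows essentially the same route as the paper: the paper states Theorem \ref{qqq} as an immediate consequence of Corollary \ref{u2} applied to the stationary-increment driver $\widehat L$ with mean drift $\mu-1<0$, which is precisely your argument (you simply spell out the verification of the hypotheses that the paper leaves implicit).
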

Thus, the following assumptions will be made throughout the paper:
\begin{description}
\item{\bf [A1]} $\mu < 1$
\item{\bf [A2]} In case $Y$ is spectrally positive and of bounded variation
then $d_Y < -1$
\end{description}
Assumption [A1] is so that we can construct a stationary version of $Q$ (as
in Theorem \ref{qqq}). If $Y$ is spectrally positive and of bounded variation
non-monotone paths then its drift $d_Y$ must be negative. If, however,
$|d_Y| \le 1$ then--see \eqref{abs-cts}--$L(s,t] \le t-s$ for all $s <t$
and so $Q$ will be identically equal to $0$.

Physically, we think of $Q$ a stationary fluid queue whose cumulative input
between times $s$ and $t$ is $L(s,t]$ and whose maximum potential
output is $t-s$.
Unlike $X$, the process $Q$ is {\em not} Markovian.
However, since $Q$ has been built on the probability space
supporting $X$, it makes sense to consider, for each $x\ge 0$,
the probability measure $P_x$ defined as $P$ conditional on $\{X_0=x\}$.

Since $L$ is a random measure allows us to consider the Palm distribution
with respect to it, namely the probability measure defined by
\[
P_L (A) = \mu^{-1} E \int_{(0,1]} \1_A \comp \theta_t L(dt).
\]
Since $P_L$ is also given as the value of the Radon-Nikod\'ym derivative
\[
P_L(A) = \mu^{-1} \frac{E \1_A L(dt)}{dt}\bigg|_{t=0}
\]
it follows that $P_L$ expresses conditioning with respect to
the event that $t=0$ is a point of increase of $L$. But $L$ is the local
time of $X$ at zero. Therefore, the support of $L$ is the set of
zeros of $X$.
We thus have
\begin{theorem}
\label{pppp}
The Palm measure $P_L$ coincides with $P_0$.
\end{theorem}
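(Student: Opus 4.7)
My plan is to verify that the Palm measure $P_L$ satisfies the two properties characterising $P_0$: first, that $P_L(X_0 = 0) = 1$, and second, that the finite-dimensional distributions of $X$ under $P_L$ agree with those under $P_0$; the identity $P_L = P_0$ on the full canonical $\sigma$-algebra then follows by a monotone class argument.

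\emph{Step 1.} Applying the definition of $P_L$ to the event $\{X_0 \ne 0\}$,
$$P_L(X_0 \ne 0) = \mu^{-1} E\!\int_{(0,1]} \1(X_t \ne 0)\, L(dt) = 0,$$
since $L$ does not charge $\{X \ne 0\}$. In the continuous case this is by definition of local time, whose support is the closure of the zero set of $X$ (and the boundary points form an $L$-null set). In the bounded-variation case, $L$ was constructed in the excerpt as a sum of i.i.d.\ exponential atoms $\e_i$ placed exactly at the discrete set of zeros of $X$, so the claim is immediate.

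\emph{Step 2.} For $0 < t_1 < \cdots < t_n$ and bounded measurable $g$, Campbell's formula gives
$$\mu\, E_L[g(X_{t_1}, \ldots, X_{t_n})] = E\!\int_{(0,1]} g(X_{s+t_1}, \ldots, X_{s+t_n}) \, L(ds).$$
I would condition the integrand on $\mathcal{F}_s := \sigma(X_u : u \le s)$ and apply the strong Markov property of $X$ at each point $s$ of the support of $L$, at which $X_s = 0$ by Step 1; this replaces the conditional expectation by the constant $E_0[g(X_{t_1}, \ldots, X_{t_n})]$, which then pulls out of the integral. Since $EL(0,1] = \mu$, this gives $E_L[g(X_{t_1}, \ldots, X_{t_n})] = E_0[g(X_{t_1}, \ldots, X_{t_n})]$. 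Cylinder events involving negative times are handled by applying the same argument to the time-reversed stationary Markov process associated with $X$, whose local time at $0$ is the reversal of $L$; joint past--future cylinder probabilities split through $X_0 = 0$ by the Markov property at the deterministic time $0$ under both measures.

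\emph{Main obstacle.} The delicate step is making ``strong Markov at points of the support of $L$'' precise and uniform across the continuous and discontinuous local-time regimes covered by the paper. This is the classical Palm/regenerative principle---the Palm distribution of a stationary random measure carried by the regenerative set of a recurrent Markov process coincides with the law of the process started from the recurrent point---and can be justified either directly from the excursion-theoretic description of $L$ in each of the two cases, or by appealing to Maisonneuve-type exit systems. Once invoked, the two characterisations of $P_L$ and $P_0$ match and the theorem is proved.
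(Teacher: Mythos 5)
Your proposal is correct and follows essentially the same route as the paper: the paper simply notes that $P_L$ amounts to conditioning on $t=0$ being a point of increase of $L$, whose support is the zero set of $X$, and concludes $P_L=P_0$ without further argument. Your two-step verification (that $P_L$ is carried by $\{X_0=0\}$, and that Campbell's formula combined with the Markov/regenerative property along the support of $L$, with time reversal handling the negative half-axis) is exactly the standard justification of that observation, so it supplies more detail than the paper rather than a genuinely different method.
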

This observation allows us to use the formulae of Appendix \ref{skororeview}
involving Palm probabilities.

\section{Stationary distribution of the fluid queue}
\label{stationary-section}
We are interested in computing $P(Q_t\in \cdot)$, a probability measure
which is the same for all $t$. 
We will use three properties of $L$.
First, duality, i.e.\ that $Y$ has the same distribution when time is reversed, see (\ref{e2}),
implies that
\[
(L(0,t] ,~ t \ge 0)  \eqdist (L[t,0),~ t \le 0), \quad
\text{ under $P$ and under $P_L$}.
\]
Second, the process 
\[
L^{-1}_x = \inf\{t \ge 0: L(0,t] > x\}, \quad x \ge 0
\] 
is a subordinator under $P_0$.
Third, the Palm measure $P_L$ coincides with $P_0$.

Recall that $\psi_Y(\theta)$ has been defined as 
$\log E e^{\theta (Y_{t+1}-Y_t)}$ when $Y$ is spectrally negative
and as
$\log E e^{-\theta (Y_{t+1}-Y_t)}$ when $Y$ is spectrally positive.
The reason is that it is customary to have $\theta \ge 0$ in both cases.
Recall also that $\Phi_Y(q) = \sup\{\theta \ge 0:
\psi_Y(\theta) = q\}$.
The stationary distribution of $Q$ will be expressed in terms of $\psi_Y$.
For earlier works on this problem, we refer to \cite{Sa} for diffusion local times and \cite{Si} for the inverse of a general 
subordinator. The present formulation in Theorem \ref{distr0} is in particular tailored for the local time of $X.$ 
The proof makes use of the Palm probability which is a new ingredient. 

%h
First, since we allow discontinuous local times, the following simple lemma is needed. We omit the proof.  
%whose proof we omit:
\begin{lemma}
\label{simple}
Let $\overline \R := \R \cup \{+\infty, -\infty\}$, 
If $h: \overline \R \to \overline \R$ 
is right-continuous and non-decreasing then
$h^{-1}(x) := \inf\{t:~ h(t) > x\}$, $x \in \R$, is right-continuous
non-decreasing, $h^{-1} : \overline \R \to \overline \R$, and, for all 
$t \in \overline \R$,
\begin{equation}
\label{invcomp}
h^{-1}(h(t-)-) \le h^{-1}(h(t)-) \le t \le h^{-1}(h(t-)) \le h^{-1}(h(t)).
\end{equation}
Furthermore, $(h^{-1})^{-1} = h$.
\end{lemma}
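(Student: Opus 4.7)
The plan is to unpack the three assertions separately from the defining formula $h^{-1}(x)=\inf\{t:h(t)>x\}$, with the key observation that for any $x$ the level set $A_x:=\{t:h(t)>x\}$ is an upper set in $\overline\R$ (if $t\in A_x$ and $t'>t$ then $h(t')\ge h(t)>x$, so $t'\in A_x$), and $A_x$ shrinks monotonically as $x$ increases.

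For monotonicity and right-continuity of $h^{-1}$, note first that $x\le x'$ gives $A_{x'}\subseteq A_x$, hence $h^{-1}(x)\le h^{-1}(x')$. For right-continuity at $x$, fix $x_n\downarrow x$; by monotonicity $\ell:=\lim_n h^{-1}(x_n)\ge h^{-1}(x)$. If $\ell>h^{-1}(x)$, pick $t$ with $h^{-1}(x)<t<\ell$; by definition of the infimum $h(t)>x$, and since $x_n\downarrow x$ we get $h(t)>x_n$ for large $n$, so $h^{-1}(x_n)\le t<\ell$, a contradiction. Hence $h^{-1}$ is right-continuous and non-decreasing on $\overline\R$.

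For the inequality chain, the middle two inequalities $h^{-1}(h(t)-)\le t\le h^{-1}(h(t-))$ are the heart of the matter. For the right one, if $s<t$ then $h(s)\le h(t-)$, so $s\notin A_{h(t-)}$, forcing $\inf A_{h(t-)}\ge t$. For the left one, pick any $y<h(t)$; then $h(t)>y$ means $t\in A_y$, so $h^{-1}(y)\le t$, and taking $y\uparrow h(t)$ yields $h^{-1}(h(t)-)\le t$. The two outer inequalities in \eqref{invcomp} then follow immediately from the monotonicity of $h^{-1}$ applied to $h(t-)\le h(t)$ (and, for the one-sided limits, the obvious $h(t-)-\le h(t)-$ and $h(t-)\le h(t)$).

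For $(h^{-1})^{-1}=h$, set $g(t):=\inf\{x:h^{-1}(x)>t\}$. If $x>h(t)$, then by right-continuity of $h$ there is $\epsilon>0$ with $h(t+\epsilon)<x$, so every $s\le t+\epsilon$ satisfies $h(s)\le h(t+\epsilon)<x$, giving $h^{-1}(x)\ge t+\epsilon>t$; thus $(h(t),\infty)\subseteq\{x:h^{-1}(x)>t\}$, so $g(t)\le h(t)$. Conversely, if $x<h(t)$ then $t\in A_x$ so $h^{-1}(x)\le t$, and therefore $x\notin\{x:h^{-1}(x)>t\}$, which forces $g(t)\ge h(t)$. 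The only delicate point is the handling of $\pm\infty$ values and of the one-sided limits in \eqref{invcomp}, but these reduce to the same monotone-set bookkeeping; this is what I expect to be the most tedious rather than difficult part of the write-up.
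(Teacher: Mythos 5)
Your argument is correct: the upper-set observation for $A_x=\{t:h(t)>x\}$ gives monotonicity and right-continuity of $h^{-1}$, the two middle inequalities of \eqref{invcomp} are proved exactly as needed (the outer ones being monotonicity), and the computation of $g(t)=\inf\{x:h^{-1}(x)>t\}$ from both sides yields $(h^{-1})^{-1}=h$, with only routine $\pm\infty$ bookkeeping left, as you note. The paper omits the proof of this lemma entirely, so there is nothing to compare against; your write-up is the standard argument and fills that gap adequately.
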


\begin{theorem} 
\label{distr0}
(i) If $X$ is the reflection of a spectrally negative L\'evy process $Y$
with $\psi_Y'(0+) <0$, $\psi_Y(1) > 0$ then
\[
P_0(Q_0 > a) = e^{-\psi_Y(1)a},
\quad
P(Q_0 > a) = \Phi_Y(0) e^{-\psi_Y(1)a}, \quad a \ge 0.
\]
(ii) If $X$ is the reflection of a spectrally positive L\'evy process $Y$
with $0 < \psi_Y'(0+) < 1$ and $d_Y < -1$ in the case of bounded variation, then
\[
P_0(Q_0 > a) = e^{-\theta^* a},
\quad
P(Q_0 > a) = \psi_Y'(0+) e^{-\theta^* a}, \quad a \ge 0,
\]
where $\theta^* >0$ is defined by $\psi_Y(\theta^*)=\theta^*$.
\end{theorem}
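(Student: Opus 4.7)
The plan is to establish the $P_0$-statement by time reversal combined with L\'evy process theory applied to the inverse local time, and then derive the $P$-statement by conditioning on the first zero of $X$ after time $0$.

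Starting from $Q_0 = \sup_{u \le 0}(\widehat L(u,0]) = \sup_{u \le 0}(L(u,0] + u)$ per \eqref{qqqeq}, the duality \eqref{e2} of $Y$ (which carries over to $L$, and hence holds both under $P$ and under the Palm measure $P_L = P_0$ via Theorem \ref{pppp}) yields
$$Q_0 \eqdist \sup_{t \ge 0}(L_t - t) \quad \text{under } P_0.$$
Under $P_0$, the right-continuous inverse $L^{-1}$ is a subordinator (Propositions \ref{loc-} and \ref{loc+}), so the change-of-variable identity $\sup_{t \ge 0}(L_t - t) = \sup_{x \ge 0}(x - L^{-1}_x)$, justified via Lemma \ref{simple}, turns $Q_0$ into the all-time supremum of the spectrally negative L\'evy process $S_x := x - L^{-1}_x$. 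Since the stability assumption $\mu < 1$ forces $E_0 S_1 = 1 - E_0 L^{-1}_1 < 0$, the supremum is exponentially distributed with rate $\Phi_S(0)$, the largest positive root of the Laplace exponent $\psi_S$.

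Identifying $\Phi_S(0)$ in each case is then a matter of algebra. In the spectrally negative setting, \eqref{ltau0} gives $\psi_S(\theta) = \theta - \theta/\Phi_Y(\theta)$, so the positive root solves $\Phi_Y(\theta) = 1$, i.e.\ $\theta = \psi_Y(1)$. In the spectrally positive setting, \eqref{ltau1} gives $\psi_S(\theta) = \theta - \Phi_Y(\theta)$, so the positive root satisfies $\theta = \Phi_Y(\theta)$, equivalently $\psi_Y(\theta) = \theta$, yielding $\theta = \theta^*$. This yields the two $P_0$-formulas.

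For the $P$-statements I would invoke the strong Markov property of $X$ at $D = \inf\{t > 0 : X_t = 0\}$. Under $P$ we have $X_0 > 0$ a.s., so $L$ is flat on $[0, D)$; hence
$$Q_0 \eqdist \sup_{t \ge 0}(L_t - t) = \max\bigl(0,\; \sup_{s \ge 0}(L(D, D+s] - s) - D\bigr) = (M - D)^+,$$
where by the strong Markov property at $D$ the random variable $M := \sup_{s \ge 0}(L(D, D+s] - s)$ has the $P_0$-distribution of $Q_0$ and is independent of $D$. Consequently
$$P(Q_0 > a) = e^{-\Phi_S(0)\, a}\, E\!\left[e^{-\Phi_S(0)\, D}\right],$$
and \eqref{pikk6}--\eqref{pikk1} evaluate this expectation in closed form: in case (i), $E[e^{-\psi_Y(1) D}] = \Phi_Y(0)/\Phi_Y(\psi_Y(1)) = \Phi_Y(0) = \mu$; in case (ii), since $\psi_Y(\theta^*)=\theta^*$ implies $\Phi_Y(\theta^*) = \theta^*$, $E[e^{-\theta^* D}] = \psi_Y'(0+)\,\Phi_Y(\theta^*)/\theta^* = \psi_Y'(0+) = \mu$. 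In both cases we recover $\mu\, e^{-\Phi_S(0) a}$, as claimed.

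The main obstacle I foresee is the discontinuous case of (i) (where $Y$ is spectrally negative of bounded variation): the change-of-variable identity between $\sup_t(L_t - t)$ and $\sup_x(x - L^{-1}_x)$, the strong Markov decomposition at $D$, and the behaviour of $L$ at its jump points all require careful handling via Lemma \ref{simple} and the explicit construction of $L$ from i.i.d.\ exponentials given in Section \ref{sec2}.
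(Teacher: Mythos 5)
Your treatment of the $P_0$-laws is essentially the proof in the paper: duality, the passage from $\sup_{t\ge 0}(L_t-t)$ to $\sup_{x\ge 0}(x-L^{-1}_x)$ via Lemma \ref{simple}, the observation that $\Lambda_x=x-L^{-1}_x$ (your $S$) is a spectrally negative L\'evy process with negative mean, and the identification of $\Phi_\Lambda(0)$ with $\psi_Y(1)$, respectively $\theta^*$; the paper reaches the exponential law through the passage times $\sigma_a$ and Lemma \ref{one-sided-exit} rather than by quoting the exponential law of the overall supremum, but that is the same standard fact. Where you genuinely diverge is the passage from $P_0$ to $P$. The paper invokes Theorem \ref{pppp} ($P_L=P_0$) and the distributional Little's law \eqref{dll}, which yields $P(Q_0>a)=\mu P_0(Q_0>a)$ in one line and never uses the law of $D$. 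You instead decompose the forward supremum at $D$ by the strong Markov property and then verify $E[e^{-\Phi_\Lambda(0)D}]=\mu$ from \eqref{pikk6}, \eqref{pikk1} together with $\Phi_Y(\psi_Y(1))=1$ and $\Phi_Y(\theta^*)=\theta^*$. When $L$ is continuous (all of case (ii), and case (i) with unbounded variation) this is a correct alternative and an instructive consistency check of the Palm identity. (A small slip: $X_0>0$ $P$-a.s.\ can fail in case (ii) of bounded variation, cf.\ Example 2 where $P(X_0=0)>0$; this is harmless since on $\{X_0=0\}$ one has $D=0$ and $L$ is absolutely continuous there, so your decomposition survives.)

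The genuine gap is the case you flag but do not resolve: case (i) with $Y$ of bounded variation, where $L$ is purely atomic, with i.i.d.\ exponential masses sitting at the (discrete) zeros of $X$. There $D$ is itself a zero and carries an atom, so $L(0,D]=L\{D\}>0$ and the pathwise identity is $\sup_{t\ge 0}(L_t-t)=\max\bigl(0,\,L\{D\}+M-D\bigr)$, with $L\{D\}$ an exponential mark independent of $(M,D)$, not $\max(0,M-D)$ as you wrote. This is not a technicality that more careful use of Lemma \ref{simple} repairs: convolving $M$ with the extra mark replaces a single exponential tail by a mixture of two exponential rates, so the factorization $P(Q_0>a)=e^{-\Phi_\Lambda(0)a}\,E[e^{-\Phi_\Lambda(0)D}]$, on which your evaluation of the constant rests, no longer follows; to restore the statement one must also re-examine what ``the $P_0$-law of $Q_0$'' means in the atomic case (under $P_0$ the backward-looking supremum defining $Q_0$ picks up the mark at the origin, whereas $\sup_x(x-L^{-1}_x)$ does not), i.e.\ precisely the bookkeeping between the conditional law $P_0$ and the Palm description of $L$. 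This is exactly where the paper's route is stronger: \eqref{dll} together with $P_L=P_0$ transfers the $P_0$-law to $P$ without ever decomposing at $D$, uniformly over the continuity properties of $L$. To complete your argument you would either restrict the strong-Markov route to the continuous cases and fall back on the Palm formula for the atomic one, or carry out the atom-at-$D$ (and atom-at-$0$) accounting explicitly.
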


\proof
By the construction of $Q$ and duality, we have
$P(Q_0 \le a)= P(\sup_{u \ge 0} (L_u -u) \le a)$. 
%At this point we need to make note of the fact that the process $\{L_t -t: t\geq 0\}$ should not have monotone paths. When, as a measure, $dL$ is singular with respect to Lebesgue measure, monotone paths are not possible. However, in the case that $Y$ is spectrally positive and of bounded variation we have seen in (\ref{abs-cts}) that local time is absolutely continuous with respect to Lebesgue measure. Moreover the paths of $\{L_t -t: t\geq 0\}$ will be monotone if and only if $d_Y\leq 1$. However this has been excluded by assumption in part (ii) of the statement of the theorem.
At this point we note that our assumptions imply that the process 
$\{L_t -t: t\geq 0\}$ does not have monotone paths.
The event $\{\sup_{t \ge 0} (L_t -t) \le a\}$ can be
expressed in terms of $L^{-1}$:
\[
\{\sup_{t \ge 0} (L_t -t) \le a\}
=
\{\sup_{x \ge 0} (x-L^{-1}_x) \le a\}.
\]
To justify this (recall that in case (i) $L$ is not necessarily 
continuous), we first assume that
$L_t \le t+a$ for all $t\ge 0$.
Hence $L_{L^{-1}_x} \le L^{-1}_x + a$ for all $x \ge 0$.
Since (Lemma \ref{simple})
\[
L_{L^{-1}_x} \ge x, \quad x \ge 0.
\] 
we have $x \le L^{-1}_x +a$ for all $x \ge 0$
and  thus $\sup_{x \ge 0} (x-L^{-1}_x) \le a$.
Assume next that $x-L_x^{-1} \le 0$ for all $x \ge 0$.
Then
$\lim_{\epsilon\downarrow 0} L^{-1}_{x-\epsilon}
= L^{-1}_{x-} \ge x-a$ for all $x \ge 0$.
Therefore, $L^{-1}_{(L_t)-} \ge L_t-a$ for all $t \ge 0$.
Since (by Lemma \ref{simple} again)
\[
L^{-1}_{(L_t)-} \le t, \quad t \ge 0,
\]
we obtain $L^{-1}_{(L_t)-} \ge t-a$ for all $t \ge 0$
and this gives $\sup_{t \ge 0} (L_t -t) \le a$.
We first compute the $P_0$-distribution of $Q_0$:
\[
P_0(Q_0 > a) = P_0(\sup_{x \ge 0} (x-L^{-1}_x) >  a),
\quad a \ge 0.
\]
Under $P_0$, the process 
\begin{equation}
\label{LAMBDA}
\{\Lambda_x := x-L^{-1}_x\,:\,  x \ge 0\},
\end{equation}
is a spectrally negative L\'evy process with bounded variation paths.
Letting
\[
\sigma_a  := \inf\{x:  \Lambda_x > a\},
\]
and applying Lemma \ref{one-sided-exit} of Appendix B, we have
\begin{equation}
\label{eeee}
P_0(Q_0 > a) = P_0(\sigma_a < \infty) 
= \lim_{q \downarrow 0} E_0\big[ e^{-q \sigma_a}  \big]
= \lim_{q \downarrow 0} e^{-\Phi_\Lambda(q) a}
= e^{-\Phi_\Lambda(0) a}.
\end{equation} 
The function $\Phi_\Lambda(q)$ is given by
\[
\Phi_\Lambda(q) = \sup\{\theta  \ge 0: \psi_\Lambda(\theta) = q\},
\quad q \ge 0
\]
(see \eqref{phi}), where 
\[
\psi_\Lambda(\theta) =\log E_0\big[ e^{\theta  \Lambda_1}  \big]
=  \theta + \log E_0\big[ e^{-\theta L^{-1}_1}  \big].
\] 
If $Y$ is spectrally negative, then we use Proposition \ref{loc-}
for an expression for
$\log E_0\big[ e^{-\theta L^{-1}_1}  \big]$. If $Y$ is spectrally positive
we use Proposition \ref{loc+}. We obtain:
\begin{equation}
\label{psil}
\psi_\Lambda(\theta) =
\begin{cases}
\theta-\frac{\theta}{\Phi_Y(\theta)}, & \text{ if $Y$ is spectrally negative},\\
\theta-\Phi_Y(\theta), & \text{ if $Y$ is spectrally positive}.
\end{cases}
\end{equation}
In both cases, $Q_0$ is exponential under $P_0$.
with parameter $\Phi_\Lambda(0)$, which has different value
in each case.
Let $\mu$ be the rate of $L$ (see (\ref{ltrate})). 
%This is also given in Lemmas \ref{loc-} and \ref{loc+}. 
Using equation \eqref{dll}, we have
\begin{equation}
\label{eeee2}
P(Q_0 > a) = \mu P_L (Q_0 > a) = \mu P_0 (Q_0 >a)
= \mu e^{-\Phi_\Lambda(0) a},
\quad a \ge 0.
\end{equation}
The proof will be complete, if we show that
\[
\Phi_\Lambda(0) =
\begin{cases}
\psi_Y(1), & \text{ if $Y$ is spectrally negative},\\
\theta^*, & \text{ if $Y$ is spectrally positive}.
\end{cases}
\]
Note that $\Phi_\Lambda(0)$ 
is the positive solution of $\psi_\Lambda(\theta)=0$.
If $Y$ is spectrally negative, we see, from the first of \eqref{psil},
that $\psi_\Lambda(\theta)=0$ iff $\Phi_Y(\theta)=1$
and, by the definition of $\Phi_Y$, the latter is true
iff $\theta=\psi_Y(1)$. Thus, $\Phi_\Lambda(0) =\psi_Y(1)$.
If $Y$ is spectrally positive,  
$\psi_\Lambda(\theta)=0$ iff $\Phi_Y(\theta)=\theta$ iff
$\theta=\psi_Y(\theta)$.
\qed

\begin{center}
\begin{minipage}{\textwidth}
{\sc Table showing the basic characteristics of the system
in both cases}\\[2mm]
\fbox{
\begin{tabular}{l||l}
%\epsfig{file=figs/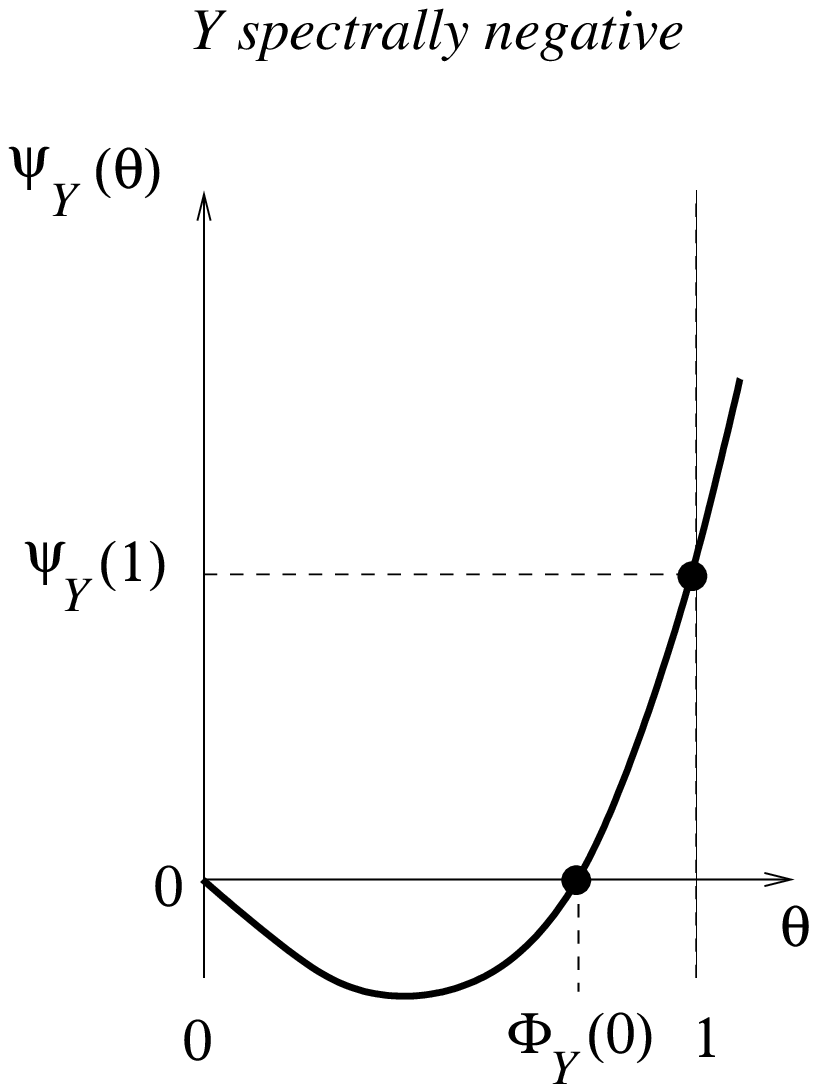,height=6cm}
%&
%\epsfig{file=figs/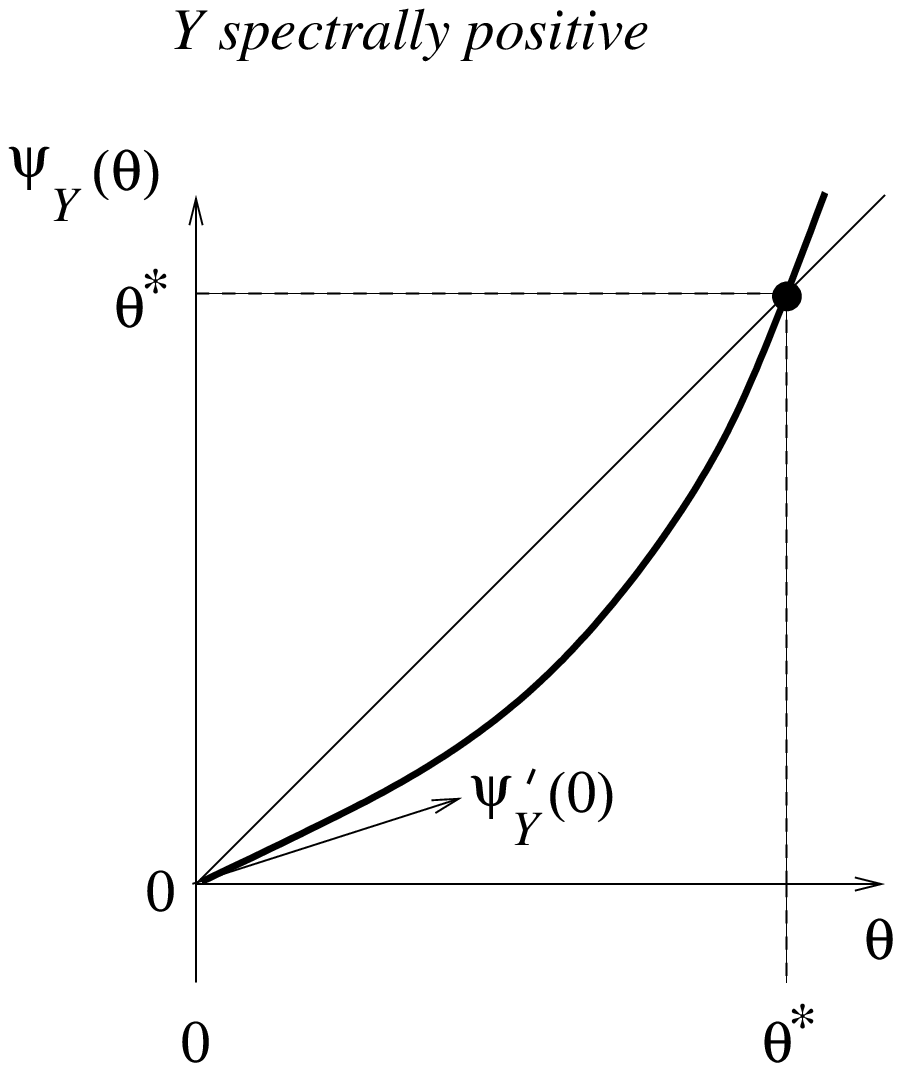,height=6cm}
%\\
\begin{minipage}{6cm}
\hspace*{-5mm} \epsfig{file=spneg.eps,height=6cm}
\\[2mm]
$~$\hrulefill$~$
\\[2mm]
$E_0[e^{-q L_x^{-1}}] = e^{-x q/\Phi_Y(q)}$
\\[2mm]
$\mu = $ rate of $L$ = $\Phi_Y(0)$
\\[2mm]
$P_0(Q_0 > a) = e^{-\theta^* a}$
\\[2mm]
$\theta^*=\psi_Y(1)$
\\[2mm]
$P(Q_0=0)=1-\Phi_Y(0)$
\\[2mm]
$E[e^{-\theta X_0}] = \Phi_Y(0)/(\theta+\Phi_Y(0))$
\\[2mm]
$E[e^{-\theta D}] = \Phi_Y(0)/\Phi_Y(\theta)$
\end{minipage}
&
\begin{minipage}{6cm}
\epsfig{file=sppos.eps,height=6cm}
\\[2mm]
$~$\hrulefill$~$
\\[2mm]
$E_0[e^{-q L_x^{-1}}] = e^{-x \Phi_Y(q)}$
\\[2mm]
$\mu = $ rate of $L$ = $\psi'_Y(0+)$
\\[2mm]
$P_0(Q_0 > a) = e^{-\theta^* a}$
\\[2mm]
$\theta^*=\psi_Y(\theta^*)$
\\[2mm]
$P(Q_0=0)=1-\psi'_Y(0)$
\\[2mm]
$E[e^{-\theta X_0}] = \psi'_Y(0+)~\theta/\psi_Y(\theta)$
\\[2mm]
$E[e^{-\theta D}] = \psi'_Y(0+)~\Phi_Y(\theta)/\theta$
\end{minipage}
\end{tabular}
}
\end{minipage}
\end{center}

\subsection{Example 1: Fluid queue driven by the local time
of a reflected Brownian motion}
Consider $Y$ to be a Brownian motion with drift (see also \cite{Sa}, \cite{MNS}, \cite{KoSa}):
\[
Y_t := \sigma B_t - \mu t, \quad t \in \R,
\]
where $\sigma > 0$, $\mu > 0$.
Here $B=(B_t, t \in \R)$ is a standard Brownian motion with
two-sided time. 
In other words, $(B_t, t \ge 0)$, $(B_{-t}, t \ge 0)$
are independent standard Brownian motions with $B_0=0$ (although
specification of $B_0$ does not affect the results below).
%Since the L\'evy measure here is $0$ (i.e.\ condition \eqref{ppp}
%does not hold) 
The L\'evy measure here is $0$.
Consider $Y$ as in Section \ref{sp} and let
\[
\psi_Y(\theta) = 
\log E [e^{-\theta Y_1}] = \frac{1}{2}\sigma^2 \theta^2 + \mu \theta,
\quad \theta > 0.
\]
Define 
\[
X_t = \widetilde \RR_t Y
= \sup_{-\infty < s \le t} 
\big( \sigma (B_t- B_s)- \mu (t-s)\big), \quad t \in \R.
\]
Lemma \ref{stationary-distn+} gives the distribution of $X_0$ under $P$:
\[
E e^{-\beta X_0} = \psi_Y'(0+) \frac{\beta}{\psi_Y(\beta)}
= \frac{\mu}{\frac{1}{2} \sigma^2 \beta + \mu},
\]
i.e.\ exponential with rate $2\mu/\sigma^2$.
Let $L$ be the local time at zero of $X$.
The rate of $L$--see \eqref{ltrate}--is $\psi_Y'(0+) = \mu$.
Assume $\mu < 1$. 
Let $\widehat L(s,t] = L(s,t]-(t-s)$ and let $Q$ be defined by
\[
Q_t = \widetilde \RR_t \widehat L,
\quad t \in \R.
\]
Theorem \ref{distr0} gives the distribution of $Q_0$ under $P$:
\[
P(Q_0>a) = \psi_Y'(0+) e^{-\theta^* a}
=
\mu e^{-2(1-\mu)a/\sigma^2}, \quad a \ge 0.
\]
Here, $\theta^*$ was found from $\psi_Y(\theta^*)=\theta^*$.
Thus $Q_0$ is a mixture of an exponential with rate $2(1-\mu)/\sigma^2$
and the constant $0$ which is assumed with probability $\mu$.

\subsection{Example 2: fluid queue driven by the local time of a compound 
Poisson process with drift}
Suppose that, for $\alpha > 0$, 
\[
Y_t = S_t-\alpha t, \quad t \in \R,
\]
where $S$ is a compound Poisson process with only positive jumps,
jump rate $\lambda$ and jump size distribution $F$. 
For simplicity, we take $F$ to be exponential
with rate $\delta >0$, i.e.\ $F(dx) = \delta e^{-\delta x} dx$. 
Then
\[
\psi_Y(\theta) = \log E e^{-\theta(Y_{t+1}-Y_t)} 
=
\alpha \theta - \lambda \int_{[0, \infty)} (1-e^{-\theta x}) F(dx)
= \alpha \theta - \frac{\lambda\theta}{\delta+\theta},
\quad \theta > 0.
\]
The assumption $0<\psi'(0+)<1$ implies that $1+\lambda/\delta>\alpha > \lambda /\delta$. Moreover, the assumption $|d_Y|>1$ implies additionally that $\alpha>1$. %where $m = \int x F(dx)$,
We can define the
background stationary Markov process by
\[
X_t = \widetilde R_t Y 
=
\sup_{-\infty < s \le t} (S_t-S_s-\alpha(t-s)), \quad t \in \R.
\]
We have
\[
E e ^{-\beta X_0} = 
\frac{\alpha - \lambda m}{\alpha - \lambda \int_{[0, \infty)} 
\frac{1-e^{-\beta x}}{\beta} F(dx)}.
\]
Unlike the previous example, here $P(X_0 =0) = \lim_{\beta\uparrow \infty}
E e ^{-\beta X_0} = \alpha-\lambda /\delta > 0$.
The local time $L$ of $X$ at $0$ has rate
\[
\mu = \psi_Y'(0+) = \alpha-\lambda /\delta.
\]
The assumptions on $\alpha$ imply that $\mu < 1$ and hence we can construct the stationary process $Q$
by $Q_t = \widetilde \RR_t \widehat L$, where $\widehat L(s,t] = L(s,t]
-(t-s)$.
We have
\[
P(Q_0 > x) = \mu e^{-\theta^* x}
\]
where $\theta^* = \psi_Y(\theta^*)= \lambda(\alpha-1)^{-1} - \delta$. Note that the latter is positive since $\alpha>1$ and $1+\lambda/\delta >\alpha$.

\subsection{Example 3: 
fluid queue driven by the local time of a risk-type process}
Let
\[
Y_t = bt - S_t, \quad t \in \R,
\]
where $b>0$, $S$ is an $\alpha$-stable subordinator, $0 < \alpha < 1$,
with
\[
E e^{-\theta (S_{t+1}-S_t)} = e^{-c \theta^\alpha}, \quad \theta > 0,
\]
and $c$ is a positive constant. Thus, $S$ is 
a $(1/\alpha)$-self-similar process,
i.e.\ $(S_{\kappa t}, t \in \R) \eqdist (\kappa^{1/\alpha} S_t, t \in \R)$.
We here have $E S_t = +\infty$ for $t > 0$ and $S_t \to \infty$ faster
than linearly, so $Y_t \to -\infty$, as $t \to \infty$, a.s.
Similarly, $S_t \to -\infty$ as $t \to -\infty$, a.s. 
So the stationary reflection of $Y$
\[
X_t = \widetilde \RR_t Y
=
\sup_{-\infty < s \le t} \big(b(t-s) - (S_t-S_s) \big),
\quad t \in \R,
\]
exists uniquely, due to Lemma \ref{u1}.
Physically, $X_t$ is the content of a queue with linear input (arriving
at rate $b$) and jump-type service represented by $S$.
Alternatively, $X$ is a so-called risk process in the theory of risk.
We have
\[
\psi_Y(\theta) = 
\log E e^{-\theta(Y_{t+1}-Y_t)}
= b \theta - c \theta^\alpha, \quad \theta \ge 0.
\]
We refer to Section \ref{sn} and, specifically, Lemma \ref{stationary-distn},
for the distribution of $X_0$ which is exponential with rate 
$\mu > 0$ where $\mu$ satisfies $\psi_Y(\mu) =0$, i.e.\ 
\[
\mu = (c/b)^{1/(1-\alpha)}.
\]
The local time $L$ of $X$ at zero is such that $t \mapsto L(0,t]$ is
a.s.\ right-continuous (but not continuous)
with rate $\mu$. Assuming that $\mu < 1$, or
\[
c < b,
\]
we can further let $\widehat L(s,t] = L(s,t]-(t-s)$ and let $Q$ be defined by
\[
Q_t = \widetilde \RR_t \widehat L, \quad t \in \R
\]
(see Lemma \ref{qqq}.)
Theorem \ref{distr0} gives the distribution of $Q_0$ under $P_0$ and under $P$.
We have,
\[
P_0(Q_0 > x) = e^{-(b-c) x},
\quad
P(Q_0 > x) = \left(\frac{c}{b}\right)^{\frac{1}{1-\alpha}}
e^{-(b-c) x}, \quad x \ge 0.
\]

\subsection{Example 4: fluid queue driven by the local time
of a risk-type process with a Brownian component}
Take
\[
Y_t = 3bt +\sigma B_t - S_t,
\]
where $S$ is the inverse local time of an independent Brownian motion. 
Assume $\sigma^2 > 0$. We have
\[
\psi_Y(\theta) = 
\log E e^{\theta(Y_1-Y_0)}
= 3b \theta + \frac{1}{2} \sigma^2 \theta^2 - 2c \theta^{1/2},
\quad \theta >0,
\]
where $c$ is a scaling parameter.
Since $\lim_{t \to \infty} Y_t = - \infty$, a.s.,
Lemma \ref{u1} allows us to construct
$X_t = \widetilde \RR_t Y$. 
Here, $Y$ is spectrally negative, and so, 
as shown in Lemma \ref{stationary-distn}
\[
P(X_0 > x) = e^{-\mu x}, \quad x >0,
\]
where $\mu >0$ and
\[
\psi_Y(\mu)=0.
\]
Letting 
\[
\delta = 1+ b^3 \sigma^{-2} c^{-2},
\]
we find
\[
\mu = 2 \bigg(\frac{c}{\sigma^2}\bigg)^{2/3}
\frac{(\delta^7+1)^{1/3} + (\delta^7-1)^{1/3}}{\delta^2}.
\]
Here, $P(X_0=0)=0$.
As in Proposition \ref{loc-},
this $\mu$ is the rate of the local time $L$ of $X$.
Note that, since $Y$ has unbounded variation paths,
the local time $L$ is a.s.\ continuous.
Assuming that $\mu <1$, which is equivalent to 
\[
\psi_Y(1) = b + \frac{1}{2} \sigma^2-c > 0,
\]
we construct $Q$ as before: 
$Q_t = \widetilde \RR_t \widehat L, t \in \R$. 
{From} Theorem \ref{distr0} we have
that
\[
P(Q_0 > x) = \mu e^{-(b + \frac{1}{2} \sigma^2-c) x}, \quad x > 0.
\]

%\subsection{A general principle}
%
%The last two examples show that the stationary distribution is always
%exponentially distributed with an atom at zero. This is no coincidence
%and in fact will always appear in the following set up.
%
%\begin{theorem} 
%Suppose that $M$ is any Markov process on state space $\mathcal{M}$
%with expectation operators $\{\mathbb{E}_x : x\in \mathcal{M}\}$.
%Denote by $L$ the Markov local time at a particular state
%$\partial\in\mathcal{M}$. Suppose further that $\partial$ is a
%recurrent state and that $M$ has a stationary distribution on
%$\mathcal{M}$ which we denote by $\mu$.  If $\mathbf{P}$ is the law
%of the stationary distribution of $S$ when driven by $L$. Then if
%$\mathbf{P}$ we have that for all $t\geq 0$ and $a>0$
%\[
%\mathbf{P}(S_t>a)
%= e^{-\Phi(0)a}\int_{\mathcal{M}}\mathbb{E}_x(e^{- \Phi(0) \tau^\partial})\mu(dx)
%\]
%where $\tau^\partial = \inf\{t>0 : M_t = \partial\}$ and 
%\[
%\Phi(0)= \sup\{\theta>0 : \psi(\theta)=0\}
%\]
%with $\psi$ being the Laplace exponent of the L\'evy process $\{t -
%L^{-1}_t : t\geq 0\}$. Hence $\mathbf{P}$ exists if and only if
%$\Phi(0)>0$.
%\end{theorem}
%\begin{proof}
%There is little deviation from the programme above. Let $Z$ be the
%L\'evy process with Laplace exponent $\psi$. Then
%\[
%\mathbb{P}_\partial(S_0 >a) = \mathbb{P}(\tau^+_a(Z)<\infty)   e^{-\Phi(0)a}.
%\]
%Assuming that $\Phi(0)>0$ then for $x\in\mathcal{M}$
%\[
%\mathbb{P}(S_0 >a| M =x) = e^{-\Phi(0)a}\mathbb{E}_x(e^{-\Phi(0)\tau^\partial}).
%\]
%Hence integrating against the stationary distribution finished the
%proof.
%\end{proof}

\section{Idle and busy periods}
\label{sec5}
In this section we study idle and busy periods of the fluid queue process
$\{Q_t\,:\, t\in\R\}$ as defined in  (\ref{qqqeq}). 
We work under the assumptions that $Y$ is either spectrally
negative or spectrally positive and that $0 < \mu < 1$, where
$\mu$ is the rate of $L$--see \eqref{ltrate}.
Under these assumptions, the process $Q$ constructed above is
stationary and the sets
\begin{align*}
\{t \in \R:~ Q_{t-} >0, ~ Q_t=0\} \equiv \{g(n):~ n \in \Z\}\\
\{t \in \R:~ Q_{t-} =0, ~ Q_t>0\} \equiv \{d(n):~ n \in \Z\}
\end{align*}
are a.s.\ discrete, with elements denoted $g(n)$, $d(n)$, respectively.
We need a convention for their enumeration, and here is the one we adopt.
%i.e.\ $g(1) = \min\{t>0:~ Q_{t-}>0,~ Q_t =0\}$,
%$g(2) = \min\{t > g(1):~ Q_{t-}>0,~ Q_t =0\}$, etc.

%%%%%%%%%%% BEGIN FIGURES %%%%%%%%%%%%%%%%%%%%%%%%%%%%%%
\begin{figure}[h]
\begin{center}
\epsfig{file=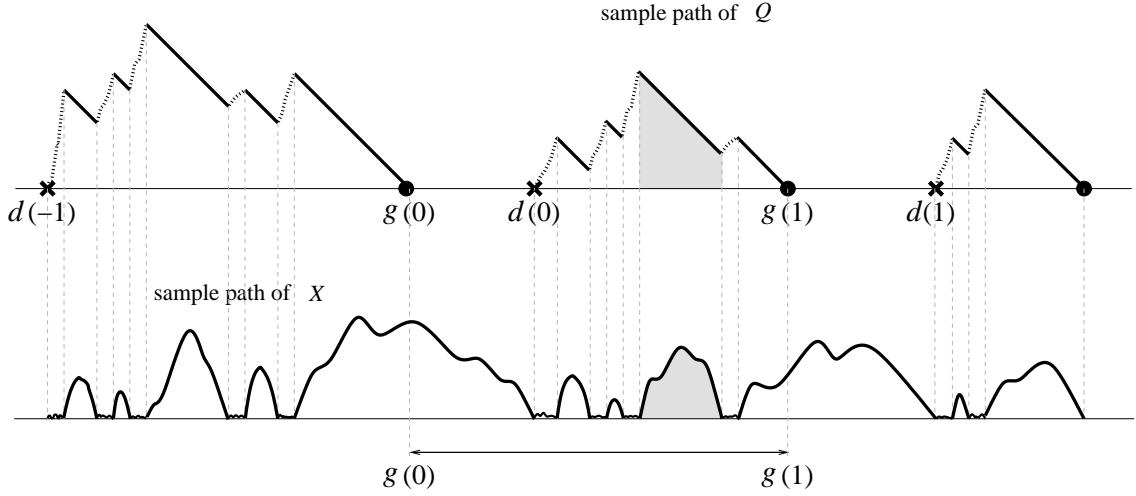,width=15cm}
\end{center}
\caption{\em Typical behaviour of $Q$ and the
background Markov process $X$ when the underlying L\'evy process $Y$
has unbounded variation paths. By convention, the origin of
time is contained between $g(0)$ and $d(0)$. Note that
excursions of $X$ away from $0$ correspond to intervals over which $Q$
decreases.}
%
%conditional on $\{Q_0=0\}$.
%The observed idle period is the interval $(g,d)$ straddling $0$.
%The excursion of $X$ containing $0$ is the piece
%$(X_t, G \le t \le D)$ of the sample path of $X$.
%(The random variables $Q_G, G, D$ are independent conditional on $X_0$.)}
%{\color{red} This is an ATTEMPT for a meaningful figure. Andreas says
%it is `wrong' and needs artistic modification. Why?}
\end{figure}

\begin{figure}[h]
\begin{center}
\epsfig{file=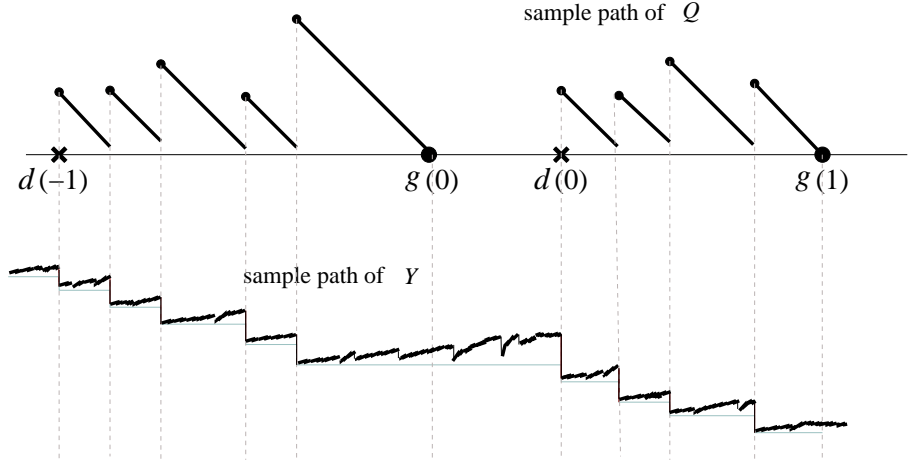,width=12cm}
\end{center}
\caption{\em Typical behaviour of $Q$ and the
background L\'evy process $Y$, in case that $Y$ is spectrally
negative with bounded variation paths.}
\end{figure}

\begin{figure}[h]
\begin{center}
\epsfig{file=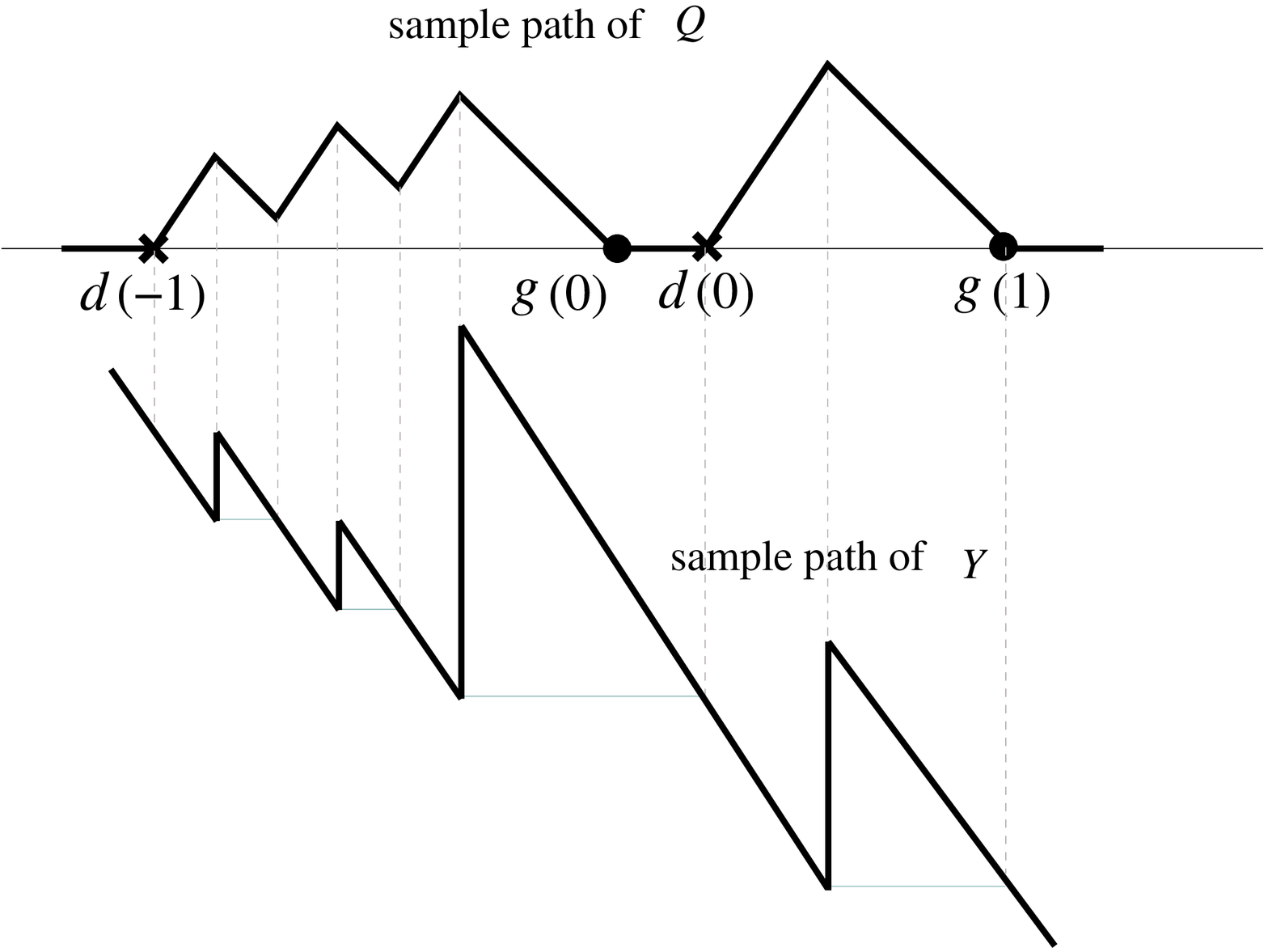,width=8cm}
\end{center}
\caption{\em Typical behaviour of $Q$ and the
background L\'evy process $Y$, in case that $Y$ is spectrally
positive with bounded variation paths. Here, only the case where the
jump part of $Y$ is compound Poisson is depicted. When $Q_t>0$ and $X_t=0$,
we see that $Q_t$ increases at rate $|d_Y|-1$, where $d_Y<-1$ is the
drift of $Y$.}
\end{figure}
%%%%%%%%%%% END FIGURES %%%%%%%%%%%%%%%%%%%%%%%%%%%%%%

First,
\[
\cdots < g(-1) < g(0) \le 0 < g(1) < g(2) < \cdots,
\]
Second,
\[
g(n) < d(n) < g(n+1), \quad n \in \Z.
\]
%An {\em idle} ({\em busy}) period of $Q$ is a maximal interval of 
%time $I$ such 
%that $Q_t=0$ ($Q_t>0$) for all $t$ in the interior of $I$.
%An {\em observed idle (busy) period} at time $t_0$ is a random
%variable distributed as an idle (busy) period $I$ 
%conditional on $\{t_0 \in I\}$. 
Let $N_1$ (resp.\ $N_2$) be the random measure (point process) that
puts mass $1$ to each of the points $g(n)$ (resp.\ $d(n)$).
Notice that the point processes $N_1, N_2$ 
are jointly stationary. 

The intervals $(g(n), d(n))$ are called {\em idle periods},
while the intervals $(d(n), g(n+1))$ are called 
{\em busy periods}.
An {\em observed idle period} is, by definition, equal in distribution
to an idle period, given that the idle period contains a fixed time
$t$ of observation. By stationarity, we may take the time of observation
to be $t=0$. In other words,
\[
\text{observed idle period} := \big(~ (g(0), d(0)) \mid Q_0=0 \big)
\eqdist \big(~ (g(0), d(0)) \mid g(0) < 0 < d(0) \big).
\]
Here, $\eqdist$ denotes equality in distribution under measure $P$. 
Similarly,
\begin{equation}
\label{oBP}
\text{observed busy period} := \big(~ (d(0), g(1)) \mid Q_0>0 \big)
\eqdist \big(~ (d(0), g(1)) \mid d(0) < 0 < g(1) \big).
\end{equation}

In this section we identify the distribution of observed idle (and busy) 
periods (see Propositions \ref{pikk8}, \ref{pikk12}). 
%
%In the main results of this section we calculate the joint 
%Laplace transforms of $(g(0),d(0))$ conditional on $Q_0=0$ 
%(see Proposition  \ref{pikk8}) and
%of $(d(0),g(1))$ conditional on $Q_0 >0$ (see Proposition  \ref{pikk12}). 
%hh
In both cases we shall appeal to the result of Lemma \ref{pikk2} below
a short proof of which is provided. 
Note that formula \eqref{b1} below and related facts are also
proved in Kozlova and Salminen \cite[Section 5]{KoSa} 
and Sirvi\"o (Kozlova) \cite{Si}.

\begin{lemma}
\label{pikk2}
Let $(\Omega, \FF, P)$ be a probability space endowed with a $P$-preserving
flow $(\theta_t, t \in \R)$ [see Appendix A].
Let $N_1, N_2$ be jointly stationary simple random point processes 
($N_i\comp \theta_t (B) = N_i(B+t)$, $t \in \R$, $B \in \BB(\R)$, $i=1,2$)
with points $\{t_i(n), n \in \Z\}$, $i=1,2$, such that
\[
\cdots < t_1(-1) < t_1(0) \le 0 < t_1(1) < t_1(2) < \cdots,
\]
and 
\[
t_1(n) < t_2(n) < t_1(n+1), \quad \text{ for all $n \in \Z$.}
\]
Let $M$ be the random measure which is defined through its derivative with 
respect to the Lebesgue measure as 
\[
M(dt)/dt = \sum_{n \in \Z}
\1\big(t_1(n) < t < t_2(n)\big).
\]
Assume that $N_1$ has finite intensity. Let $P_M$ be the Palm measure with
respect to $M$. Then
\begin{equation}
\label{b1}
E_M[e^{-\alpha t_2(0) + \beta t_1(0)}]
= \frac{\alpha E_M[e^{-\alpha t_2(0)}] - \beta E_M[e^{-\beta t_2(0)}]  }
{\alpha-\beta},
\quad \alpha, \beta > 0.
\end{equation}
\end{lemma}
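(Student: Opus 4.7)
The strategy is to use a standard exchange (Neveu/Mecke-type) formula to convert the Palm measure $P_M$ of the random measure $M$ into the Palm measure $P_{N_1}$ of its left-endpoint point process, compute the resulting integral explicitly as a divided difference, and then eliminate $E_{N_1}$ by specializing to one of the parameters equal to $0$.

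\textbf{Step 1 (exchange formula).} I would first record the following identity: for every measurable $f:\R_-\times\R_+\to\R_+$,
\begin{equation*}
\lambda_M\, E_M\bigl[f(t_1(0),t_2(0))\bigr]
= \lambda_{N_1}\, E_{N_1}\!\left[\int_0^{T_2} f(-s,\,T_2-s)\,ds\right],
\end{equation*}
where $T_2 := t_2(0)$ under $P_{N_1}$ (and $t_1(0)=0$ under $P_{N_1}$ by the usual convention), $\lambda_{N_1}$ is the intensity of $N_1$, and $\lambda_M = EM(0,1]$. This is nothing more than Campbell's formula combined with Palm inversion: $M$ is supported by the disjoint intervals $(t_1(n),t_2(n))$, so integrating $F\comp\theta_s$ against $M(ds)$ over $[0,1]$ can, by a standard stationary-averaging argument, be rewritten as a sum over excursions, one of which is then selected via $P_{N_1}$ while the position inside is parametrized by $s$. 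In particular, taking $f\equiv 1$ recovers the formula $\lambda_M=\lambda_{N_1}E_{N_1}[T_2]$.

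\textbf{Step 2 (evaluate the integral).} Apply the identity of Step 1 to $f(x,y)=e^{\beta x-\alpha y}$ for $\alpha\neq \beta$, both positive. The inner integral is elementary:
\begin{equation*}
\int_0^{T_2} e^{-\beta s-\alpha(T_2-s)}\,ds
= \frac{e^{-\beta T_2}-e^{-\alpha T_2}}{\alpha-\beta}.
\end{equation*}
Taking $E_{N_1}$-expectation and multiplying by $\lambda_{N_1}$ gives
\begin{equation*}
\lambda_M\, E_M[e^{-\alpha t_2(0)+\beta t_1(0)}]
= \frac{\lambda_{N_1}E_{N_1}[e^{-\beta T_2}]-\lambda_{N_1}E_{N_1}[e^{-\alpha T_2}]}{\alpha-\beta}.
\end{equation*}

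\textbf{Step 3 (eliminate $E_{N_1}$).} Specialize the previous display to $\beta=0$ (i.e.\ apply Step 1 to $f(x,y)=e^{-\alpha y}$, or simply let $\beta\to 0$): this yields
\begin{equation*}
\lambda_{N_1}E_{N_1}[e^{-\gamma T_2}] \;=\; \lambda_{N_1}-\gamma\,\lambda_M\,E_M[e^{-\gamma t_2(0)}],\qquad \gamma>0.
\end{equation*}
Substitute this for $\gamma=\alpha$ and $\gamma=\beta$ into the numerator of Step 2; the $\lambda_{N_1}$-terms cancel, and after dividing by $\lambda_M$ the claimed formula \eqref{b1} drops out. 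The case $\alpha=\beta$ follows by continuity (l'H\^opital).

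\textbf{Where the work is.} The computation in Steps 2--3 is purely algebraic once the exchange formula in Step 1 is available, so the only real content is the justification of that formula. This, however, is classical Palm calculus for a jointly stationary pair consisting of a random measure and a point process (see, e.g., Baccelli--Br\'emaud's inversion / Neveu's exchange formula), and the finiteness of $\lambda_{N_1}$ together with $t_2(n)<t_1(n+1)$ ensures all the required integrability and disjointness. No delicate issues arise beyond invoking this standard result.
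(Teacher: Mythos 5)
Your proof is correct and follows essentially the same route as the paper: the Neveu exchange formula between $P_M$ and $P_{N_1}$ (the paper's \eqref{exch}), the same evaluation of the inner integral giving the divided difference \eqref{MN1}, and elimination of the $E_{N_1}$-transforms by specializing a Laplace parameter to zero. The only (cosmetic) difference is that you eliminate $E_{N_1}[e^{-\gamma t_2(0)}]$ directly from the $\beta=0$ specialization at both $\gamma=\alpha$ and $\gamma=\beta$, whereas the paper reaches the same identity slightly more circuitously through the additional exchange formulas involving $N_2$ (its \eqref{MN2} and \eqref{bbb2}).
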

\proof
It is easy to see that $M$ is also stationary, i.e.\ $M\comp \theta_t (B)
= M(B+t)$. 
Let $P_{N_i}$ be the Palm measure with respect to $N_i$, $i=1,2$
and let $\lambda$ be the intensity of $N_1$ (which is--due
to the law of large numbers--the same as the intensity of $N_2$).
It follows easily from Campbell's formula that
$M$ has finite intensity: 
$\lambda_M = \lambda E_{N_1} [t_2(0)-t_1(0)] < \infty$.
The Palm exchange formula\footnote{In \cite{BB} 
p. 21 the formula is given and proved for point processes 
but the generalization for 
arbitrary jointly stationary random measures is straightforward.}
 between $P_M$ and $P_{N_1}$ yields
\begin{equation}
\label{exch}
\lambda_M E_M [Y] = 
\lambda E_{N_1} \int_{t_1(0)}^{t_1(1)} Y \comp \theta_t ~ M(dt)
=
\lambda E_{N_1} \int_{t_1(0)}^{t_2(0)} Y \comp \theta_t ~ dt,
\end{equation}
for any bounded random variable $Y$.
Apply \eqref{exch} with $Y=e^{-\alpha t_2(0) + \beta t_1(0)}$. 
Since $t_1(0) = \sup\{t \le 0: N_1(\{t\})=1\}$,
$t_2(0) = \inf\{t > t_1(0): N_2(\{t\})=1\}$,
and $P_{N_1}(t_1(0)=0<t_2(0))=1$,
we have
$t_1(0)\comp \theta_t = -t$, $t_2(0)\comp \theta_t = t_2(0)-t$,
$P_{N_1}$-a.s.\ on $\{t_1(0) < t < t_2(0)\}$.
Therefore, $Y\comp\theta_t = e^{-\alpha t_1(0)} e^{(\alpha-\beta)t}$,
$P_{N_1}$-a.s.\ on $\{t_1(0) < t < t_2(0)\}$, and so
\begin{equation}
\label{MN1}
\lambda_M E_M [e^{-\alpha t_2(0) + \beta t_1(0)}]
= \lambda \frac{E_{N_1}[e^{-\beta t_2(0)}]-E_{N_1}[e^{-\alpha t_2(0)}]}
{\alpha-\beta}.
\end{equation}
Arguing in a similar manner, through the exchange formula between
$M$ and $N_2$, we obtain
\begin{equation}
\label{MN2}
\lambda_M E_M [e^{-\beta t_2(0) + \alpha t_1(0)}]
= \lambda \frac{E_{N_2}[e^{\alpha t_1(0)}]-E_{N_2}[e^{\beta t_1(0)}]}
{\beta-\alpha}.
\end{equation}
%Comparing the two, we find
%\[
%E_{N_1}[e^{-\beta t_2(0)}] = E_{N_2}[e^{\beta t_1(0)}],
%\quad
%E_{N_1}[e^{-\alpha t_2(0)}] = E_{N_2}[e^{\beta t_1(0)}].
%\]
Setting $\beta=0$, and then $\alpha=0$, in \eqref{MN1} we obtain:
\begin{align}
\lambda_M E_M[e^{-\alpha t_2(0)}] &= \lambda
\frac{1-E_{N_1}[e^{-\alpha t_2(0)}]}{\alpha}],
\label{aaa}
\\
\lambda_M E_M[e^{\beta t_1(0)}] &= \lambda
\frac{1 - E_{N_1}[e^{-\beta t_2(0)}]}{\beta}.
\label{bbb}
\end{align}
On the other hand, with $\alpha=0$ in \eqref{MN2}, we have
\begin{equation}
\label{bbb2}
\lambda_M E_M[e^{-\beta t_2(0)}] 
= \lambda
\frac{1-E_{N_2}[e^{\beta t_1(0)}]}{\beta}.
\end{equation}
The exchange formula between $N_1$ and $N_2$ shows that
the right hand sides of \eqref{bbb} and \eqref{bbb2} are equal.
Substituting these and \eqref{aaa} into \eqref{MN1}
we obtain the result.
\qed

%Taking $t_0=0$ as the `observation point' let
%\begin{align*}
%g_i &:= \sup \{t < 0: Q_t > 0\},\\
%d_i &:= \inf \{t > 0: Q_t > 0\}.
%\end{align*}
%Then
%\[
%\text{observed idle period} :\eqdist \big( (g_i, d_i) \mid Q_0 =0 \big).
%\]
%We define similarly
%\begin{align*}
%g_b &:= \sup \{t < 0: Q_t = 0\},\\
%d_b &:= \inf \{t > 0: Q_t = 0\},
%\end{align*}
%and
%\[
%\text{observed busy period} :\eqdist \big( (g_b, d_b) \mid Q_0 >0 \big).
%\]
%
%In the main results of this section we calculate the joint Laplace transforms of $(g_i,d_i)$ (see Proposition  \ref{pikk8}) and
% $(g_b,d_b)$ (see Proposition  \ref{pikk12}) under the appropriate conditioning specified above. 
%%hh
%In both cases we shall appeal to the following result, which holds for excursions of general stationary processes, see Pitman \cite{PIT} Corollary p. 298 
%and references therein. For formula (\ref{b1}) below and related facts we refer to 
%Kozlova and Salminen \cite{KoSa} Section 5 and Sirvi\"o (Kozlova) \cite{Si}.
%\begin{lemma} \label{pikk2}
%Let the pair $(g,d)$ be either $(g_i,d_i)$ or $(g_b, d_b)$. Then
%$$
%-g \eqdist d,
%$$
%and given
%$v:= d-g$,  the random variable $-g$ is  uniformly distributed on $(0,v)$.
%  In particular,
%\begin{equation}
%\label{b1}
%E [e^{-\alpha d + \beta g}] 
%= \frac{\alpha E[e^{-\alpha d}] - \beta E[e^{-\beta d}]}{\alpha -\beta},
%\quad \alpha, \beta \ge 0.
%%= \frac{\alpha E[e^{\alpha G}] - \beta E[e^{\beta G}]}{\alpha -\beta}.$$
%\end{equation}
%\end{lemma}
%
%{\bf The formulation here is not quite right - one needs to condition on something like $-g,d>0$ ?}
%

\subsection{Observed idle periods}
\label{ssec51}

We are interested in the distribution of the idle period $(g(0), d(0))$,
given that $Q_0=0$. 
The rationale used for this computation is as follows:
We can always assume that $X_0>0$, since this is an event
with probability $1$. If $Q_0=0$ then $Q_t$ will remain $0$ at least until
$X$ hits $0$, since $Q$ cannot increase unless there is
an accumulation of local time $L$, and this can happen only
when $X$ is $0$. Recall that the first hitting time of $0$ by $X$
is denoted by $D=\inf\{t>0: X_t=0\}$.
If $Q_0=0$, the first time that $Q$ becomes positive has been
denoted by $d(0)$.
Our claim is:
\begin{lemma} 
\label{iniin}
Given that $Q_0=0$ the ending time of the idle period is a.s. equal to $D,$ i.e., 
\label{Dd}
\[
P(d(0) = D \mid Q_0=0) =1.
\]
\end{lemma}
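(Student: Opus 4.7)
The plan is to establish both inequalities $d(0)\ge D$ and $d(0)\le D$ almost surely on $\{Q_0=0\}$, then split into cases for the harder direction based on the path structure of $L$.

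For $d(0)\ge D$, I would use the Skorokhod representation $Q_t=\sup_{u\le t}(\widehat L_t-\widehat L_u)$ and the basic fact that the measure $L$ is supported on the closure of $\{X=0\}$. Since $X_0$ has a continuous (exponential) distribution, $X_0>0$, $P$-a.s., and so $X_s>0$ for all $s\in[0,D)$, which forces $L(0,t]=0$ for $t\in[0,D)$. Splitting the supremum over $u\le 0$ and $0<u\le t$: for $0<u\le t<D$ the bracket equals $-(t-u)\le 0$; for $u\le 0$ it equals $\widehat L(u,0]-t$, which is $\le Q_0-t=-t\le 0$. Hence $Q_t=0$ on $[0,D)$, giving $d(0)\ge D$.

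For $d(0)\le D$, I would split into the three path-type cases described after the figures.
\textbf{Case (i): $Y$ spectrally negative with bounded variation} (discontinuous $L$). By construction $L$ puts mass $\ee_{n_D}$ (an independent $\mathrm{Exp}(1)$) at the visit time $D$, so for $u\uparrow D$, $\widehat L(u,D]=\ee_{n_D}-(D-u)\to\ee_{n_D}$, yielding $Q_D\ge\ee_{n_D}>0$, $P$-a.s.
\textbf{Case (ii): $L$ continuous, $Y$ with unbounded variation.} By the strong Markov property at $D$, the process $\tilde L_t:=L(D,D+t]$ is, under the conditional law, the local time of an independent copy of $X$ started at $0$. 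Its inverse $\tilde L^{-1}$ is a subordinator with Laplace exponent $q/\Phi_Y(q)$ (spectrally negative case) or $\Phi_Y(q)$ (spectrally positive case); in both cases unbounded variation of $Y$ gives $\Phi_Y(q)\to\infty$ as $q\to\infty$, so the drift of $\tilde L^{-1}$ vanishes. By the standard small-time law for subordinators, $\tilde L^{-1}_x/x\to 0$ a.s.\ as $x\downarrow 0$, equivalently $\tilde L_t/t\to\infty$ as $t\downarrow 0$. Hence $\tilde L_t>t$ for arbitrarily small $t>0$, so $Q_{D+t}\ge \tilde L_t-t>0$, and $d(0)\le D+t$ for such $t$; letting $t\downarrow 0$ gives $d(0)\le D$.
\textbf{Case (iii): $Y$ spectrally positive with bounded variation.} Then $0$ is irregular for $(0,\infty)$, so $X_{D+s}=0$ for $s\in[0,\zeta]$ with $\zeta>0$, $P$-a.s., and by \eqref{abs-cts}, $L(D,D+t]=|d_Y|\,t$ for $t\in[0,\zeta]$. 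Since assumption [A2] gives $|d_Y|>1$, we have $L(D,D+t]>t$, so $Q_{D+t}>0$ for all small $t$, again yielding $d(0)\le D$.

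The main obstacle is Case (ii): identifying that the driftless subordinator $\tilde L^{-1}$ has $\tilde L^{-1}_x=o(x)$ as $x\downarrow 0$ (equivalently $L_t/t\to\infty$). All other pieces — the identification $L(0,t]=0$ on $[0,D)$, the compound-Poisson jump structure in case (i), and the absolutely continuous rate $|d_Y|$ formula in case (iii) — are either immediate from the construction of $L$ in Section~\ref{sec2} or from the hypothesis [A2]. Combining the two inequalities in each of the three exhaustive cases gives $P(d(0)=D\mid Q_0=0)=1$.
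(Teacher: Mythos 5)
Your route is genuinely different from the paper's: the paper establishes $d(0)\ge D$ and then excludes $d(0)>D$ by contradiction, observing that on a zero stretch of $Q$ beyond $D$ one would need $L(u,t]\le t-u$ there, which is incompatible with the a.s.\ singularity of $L$ on right neighbourhoods of $D$ (the absolutely continuous case being dispatched via the rate $|d_Y|-1>0$). You instead show directly that $Q$ becomes positive at, or immediately after, $D$. Your first direction and your cases (i) and (ii) are sound; in particular, the case (ii) argument via the vanishing drift of the post-$D$ inverse local time and the small-time law $\widetilde L^{-1}_x/x\to 0$ is a correct, more quantitative substitute for the paper's singularity argument. (Minor imprecision there: for spectrally positive $Y$ the fact you need is $\Phi_Y(q)/q\to 0$, i.e.\ $\psi_Y(\beta)/\beta\to\infty$ for unbounded variation; $\Phi_Y(q)\to\infty$ alone holds in all cases and is not what kills the drift.)

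The genuine gap is in case (iii). When $Y$ is spectrally positive of bounded variation but with infinite jump activity ($\Pi((0,\infty))=\infty$, $\int_0^1 y\,\Pi(dy)<\infty$), it is \emph{not} true that $X_{D+s}=0$ for all $s$ in some interval $[0,\zeta]$: the jump times of $Y$ are a.s.\ dense, and at any jump time the post-jump value of $Y$ strictly exceeds its running infimum (the pre-jump value already dominates it), so $X\ge\Delta>0$ there; hence $X$ exits $0$ immediately after $D$. The zero set of $X$ has positive Lebesgue measure (which is exactly why $L$ is absolutely continuous and \eqref{abs-cts} holds) but empty interior, so your identity $L(D,D+t]=|d_Y|\,t$ fails — one only has $L(D,D+t]<|d_Y|\,t$. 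Your conclusion survives, but it needs the same device as your case (ii): here the subordinator $\widetilde L^{-1}$ has Laplace exponent $\Phi_Y(q)$ with strictly positive drift $\lim_{q\to\infty}\Phi_Y(q)/q=1/|d_Y|$, so by the small-time law $L(D,D+t]/t\to|d_Y|>1$ a.s.\ as $t\downarrow 0$, and therefore $Q_{D+t}\ge L(D,D+t]-t>0$ for all small $t$, using assumption [A2]. As written, your case (iii) covers only the compound Poisson sub-case (the only one depicted in Figure 3); with the above repair the three cases are indeed exhaustive and the lemma follows.
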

\proof
{From} the argument above we have that $D \le d(0)$ a.s.\ on $\{Q_0=0\}$.
Suppose that there is $\Omega_0 \subset \Omega$ with $P(\Omega_0)>0$
such that $Q_0=0$ and $D < d(0)$ a.s.\ on $\Omega_0$.
If $Q_0=0$ and $D < d(0)$ then
\[
Q_t = \sup_{D \le u \le t} \{L(u,t]-(t-u)\} \equiv 0,
\quad
\text{ for all } t \in (D, d(0)).
\]
This implies that
\[
L(u,t] \le t-u, \quad
\text{ for all } D < u < t < d(0),
\]
which means that, for $\omega \in \Omega_0$,
$L(\omega, \cdot)$ is absolutely continuous on some right neighbourhood
of $D$. If $Y$ is spectrally negative or if $Y$ is spectrally
positive but not of bounded variation, then $L$ is a.s.\ singular
on any  right neighbourhood of $D$, and we obtain a contradiction.
If $Y$ is spectrally positive with bounded variation paths
then $L$ is absolutely continuous and is given by \eqref{abs-cts}.
In this case, $Q$ increases at rate $|d_Y|-1>0$ 
%{\bf (I THINK THIS IS AN ASSUMPTION WE MUST MAKE, OTHERWISE
%$Q\equiv 0$ AT ALL TIMES)}
whenever it is positive
and this shows immediately that here, too, $D=d(0)$ a.s.\ on $\{Q_0=0\}$.
\qed 
\begin{remark} {\rm
The result in Lemma \ref{iniin} can alternatively be expressed by saying that the process $(L_t-t\,:\, t\geq 0)$ is 
under $P_0$ initially increasing. In \cite{MNS} Proposition 6.3 this is proved in the case $X$ is a reflecting Brownian
motion, and the proof therein could have been modified to cover the present case. However, we found it 
motivated to give the above proof which highlights other aspects than the proof in \cite{MNS}.}  
\end{remark}

Using Lemma \ref{pikk2},
we shall reduce the problem to that of finding the distribution of 
$D=\inf\{t>0: X_t=0\}$ given that $Q_0=0$.
Let $N_1$ (resp.\ $N_2$) be the point process with points $\{g(n\}$ 
(resp.\ $\{d(n)\}$).
Then $M(dt)/dt = \1(Q_t =0)$, and so
\[
P_M = P(\cdot \mid Q_0=0).
\]
Formula \eqref{b1}, together with Lemma \ref{Dd}, then gives
\begin{equation}
\label{almostthere}
E[e^{-\alpha d(0)+\beta g(0)} \mid Q_0=0] =
\frac{\alpha E[e^{-\alpha D}|Q_0=0] - \beta E[e^{-\beta D}|Q_0=0] }{\alpha-\beta}.
\end{equation}
To compute the distribution of $D$ given $Q_0=0$ we need the following two lemmas.
\begin{lemma}
\label{notice} 
Let \[
G := \sup\{t <0:~ X_t=0\}.
\]
Then it holds
\[
\{Q_0=0\} = \{Q_G + G \le 0\}.
\]
%where
%\[
%G := \sup\{t <0:~ X_t=0\}.
%\]
\end{lemma}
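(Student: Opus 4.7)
The plan is to decompose the Skorokhod supremum that defines $Q_0$ at the last zero $G$ of the background process $X$ strictly before time $0$. From Theorem \ref{qqq},
$$Q_0 = \sup_{u \le 0}\bigl(\widehat L_0 - \widehat L_u\bigr) = \sup_{u \le 0}\bigl(L(u,0] + u\bigr),$$
and I will split this supremum into the pieces over $[G,0]$ and over $(-\infty,G]$. The first piece will turn out to equal $0$ because $L$ places no mass on $(G,0]$, and the second piece will be identified with $Q_G + G$ by a trivial algebraic rewrite. Since $u = 0$ always contributes the value $0$, one has $Q_0 \ge 0$, so the desired equivalence $\{Q_0=0\}=\{Q_G+G\le 0\}$ will drop out immediately.

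For the first piece, the support of the random measure $L$ is the closure of $\{t : X_t = 0\}$, and by the very definition of $G$ no point of the open interval $(G,0)$ belongs to that set. Hence $L((G,0]) = 0$ almost surely, and consequently $L(u,0]=0$ for every $u \in [G,0]$, giving
$$\sup_{G \le u \le 0}\bigl(L(u,0] + u\bigr) = \sup_{G \le u \le 0} u = 0.$$
For the second piece, additivity of $L$ together with $L((G,0])=0$ yields $L(u,0] = L(u,G]$ for $u \le G$, whence
$$L(u,0] + u = \bigl(L(u,G] - (G-u)\bigr) + G,$$
so taking the supremum over $u \le G$ and recognising the defining formula for $Q_G$ gives $\sup_{u \le G}(L(u,0]+u) = Q_G + G$. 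Combining the two pieces, $Q_0 = \max(0, Q_G + G)$, from which the claim follows at once.

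The only genuinely delicate point is the identity $L((G,0])=0$, which has to be checked in each of the three sample-path regimes (continuous $L$ when $Y$ has unbounded variation; purely atomic $L$ in the spectrally negative bounded variation case; and the absolutely continuous $L$ given by \eqref{abs-cts} in the spectrally positive bounded variation case; cf.\ Figures 1--3). In each regime the support argument is identical, but one must be careful with the endpoint bookkeeping: the jump (or local density) of $L$ at $G$, when present, is captured by the supremum defining $Q_G$ as $u \uparrow G$, and is correctly excluded from $L((G,0])$ by the left-open convention, so it is never double-counted. Apart from this case check, the argument is purely algebraic and does not require any of the fluctuation theory used elsewhere in the paper.
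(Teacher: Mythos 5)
Your proof is correct and takes essentially the same route as the paper's: both arguments rest on the fact that $L$ puts no mass on $(G,0]$, so that $Q_0=(Q_G+G)^+$, and splitting the stationary supremum \eqref{qqqeq} at $u=G$ is just an unwound form of the paper's appeal to the flow identity $Q_0=\RR_{G,0}\widehat L(Q_G)$ on the interval $[G,0]$ where $\widehat L$ is pure negative drift. If anything, your version makes both inclusions explicit at once via $Q_0=\max(0,Q_G+G)$, which the paper's printed proof leaves partly implicit.
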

\proof
%The claimed identity is intuitively clear since the local time $L$ does not increase in the interval $(G,D).$  
Since $X_t >0$ for all $t \in (G,D)$,
we have
\[
L(s,t] =0, \quad G \le s \le t \le D.
\]
Recall that
\[
Q_t = \RR_{s,t} \widehat L (Q_s)
=
\sup_{s \le u \le t} \widehat L(u,t] \vee \big(Q_s + \widehat L(s,t]\big)
\]
So, if $G \le s \le t \le D$, we have $\widehat L(s,t] =
L(s,t]-(t-s) = -(t-s)$, i.e.
\[
Q_t = (Q_s - (t-s))^+, \quad G \le s \le t \le D.
\]
If we assume that $Q_0=0$, we have $G \le g$ and so
\[
0=Q_g = (Q_G-(g-G))^+,
\]
which implies that
$Q_G +G = g \le 0$.
\qed

%We have
%
%{\color{red} ---------- SUPPLY PROOF ----------}
%
%
%Recall that we are interested in 
%\[
%\text{observed idle period} :\eqdist \big( (g_i, d_i) \mid Q_0 =0 \big).
%\]
%Since $P(Q_0=0)=1-\mu >0$, conditioning on $Q_0=0$ is simply a na\"ive conditioning.
%%We keep in mind throughout  that $X, Q$ are jointly stationary
%%processes.
%{From} the construction of $Q$--see \eqref{qqqeq}, \eqref{sds},
%\eqref{sds2}--it is clear that $Q$ increases only on the support of $L$
%which is the closure of the set $\{t: X_t=0\}$.
%Hence, for all $t$ in the interior of an idle period, $X_t > 0$.
%Define then 
%\begin{align*}
%G &:= \sup \{t < 0: X_t = 0\},\\
%D &:= \inf \{t > 0: X_t = 0\}.
%\end{align*}
%At this point it is worth keeping in mind that $X_0$ may have an
%atom at zero (as, e.g.\ in Example 2).
%Since $X_t =0$ implies that $t$ is in the support of $L$
%which implies that $t$ is a point of increase of $Q$, it
%follows that 
%\[
%d_i=D.
%\]
%
\begin{lemma}
\label{3indep}
(i) Conditional on $X_0$, the random variables $Q_G, G, D$ are
independent (under $P$).
\\
(ii)
For all $x \ge 0$, $t \ge 0$, \[
P(Q_G > t) = P_x(Q_G > t) = P_0(Q_0 >t) = e^{-\theta^* t}. \]
where $\theta^* = \psi_Y(1)$ if $Y$ is spectrally negative
or is equal to the unique positive solution of $\theta^*=\psi_Y(\theta^*)$
if $Y$ is spectrally positive.
\\
(iii)
$Q_G$ is independent of $(G,D)$ (under $P$).
\end{lemma}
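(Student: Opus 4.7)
My plan is to prove (iii) first via the backward strong Markov property of $X$ at $G$, deduce (i) by combining (iii) with a forward Markov step at time $0$, and then obtain (ii) by identifying the law of $Q_G$ with that of $Q_0$ under $P_0$ through a time-reversal argument.

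First, $G=\sup\{t<0:X_t=0\}$ is a stopping time for the reverse filtration $\FF^+_t:=\sigma(X_s:s\ge t)$, since $\{G\ge -s\}=\{\exists\, t\in[-s,0]:X_t=0\}\in\FF^+_{-s}$, and $X_G=0$ almost surely. Because $X$ is a stationary Markov process, its time-reversal is again Markovian, and the reverse strong Markov property at $G$ yields that $\sigma(X_t:t\le G)$ and $\FF^+_G$ are unconditionally independent under $P$ (the conditioning on $X_G=0$ being vacuous). The random variable $Q_G=\sup_{u\le G}(L(u,G]-(G-u))$ is measurable with respect to the former (possibly after enlarging the filtration to include the i.i.d.\ exponentials entering the definition of $L$ in the discontinuous case), while $G$, $D$ and $X_0$ are all $\FF^+_G$-measurable (for $G$ itself, $\{G\le g\}\cap\{G\ge -s\}\in\FF^+_{-s}$ whenever $-s\le g$; for $D$ and $X_0$, one uses $0\ge G$). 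This proves the stronger statement $Q_G\perp(G,D,X_0)$, which immediately gives (iii) as well as the equality $P(Q_G>t)=P_x(Q_G>t)$ in (ii). Combined with the conditional independence $G\perp D\mid X_0$ supplied by the ordinary forward Markov property of $X$ at time $0$, this yields the full mutual conditional independence asserted in (i).

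For the distributional form in (ii), I would apply the reverse strong Markov property once more: conditional on $X_G=0$, the trajectory $(X_{G-s})_{s\ge 0}$ is distributed as the reverse Markov chain of $X$ started at $0$, which by stationarity coincides in law with $(X_{-s})_{s\ge 0}$ under $P_0$. After shifting by $-G$, the measure $L|_{(-\infty,G]}$ therefore has the same distribution as $L|_{(-\infty,0]}$ under $P_0$, and since $Q_G$ is a shift-invariant functional agreeing in form with $Q_0$, we obtain $Q_G\eqdist Q_0$ under $P_0$; Theorem~\ref{distr0} then supplies the value $e^{-\theta^* t}$ in both the spectrally negative and spectrally positive cases. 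The main obstacle is the rigorous invocation of the reverse strong Markov property at the random time $G$; the cleanest route is to cite the standard fact that stationary Markov processes admit Markovian time-reversals, so that $G$ is a bona fide first-hitting time for the reversed process. An alternative that bypasses this citation is to approximate $G$ by the forward first-hitting times $-\inf\{s\in(0,T]:X_{-s}=0\}$, apply the ordinary strong Markov property, and let $T\to\infty$, relying on the drift assumptions of Section~\ref{sec2} to ensure that $X$ almost surely hits $0$ before time $-\infty$.
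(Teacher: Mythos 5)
Your argument is correct and follows essentially the same route as the paper's proof: the backward (time-reversed) strong Markov property at $G$, where $X_G=0$ makes the conditioning vacuous, the forward Markov property at time $0$ to decouple $D$, and the identification $Q_G=Q_0\comp\theta_G$ together with Theorem~\ref{distr0} for the exponential law. The only cosmetic difference is that you fold $D$ into the reversed-time step (so you need the reverse strong Markov property with respect to the filtration enlarged by the forward path, which is why $D\in\FF^+_G$), whereas the paper keeps $D$ out of that step and treats it solely through the forward Markov property at $0$, then deduces (iii) from (i) and (ii); both versions rest on exactly the same two-sided Markov structure.
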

\proof
(i) The independence follows from the strong Markov
property at $G$ (at which $X_G=0$) and Markov property
at $0$. Indeed, first observe that $G$ is
a stopping time with respect to the
filtration $\{\FF_t :=\sigma(X_{-s} , 0 \le s \le t),~ t \ge 0\}$.
Second, $Q_G = \widetilde \RR_G \widehat L
= \sup_{s \le G} \widehat L(s, G]
= \sup_{s \le G} \big( L(s, G] - (G-s) \big)$
and so $Q_G \1(G < t)$ is measurable with respect to
$\FF'_t :=\sigma(X_{-s} , s > t)$ for all $t$.
This proves independence between $Q_G$ and $G$.
Third, $D$ is measurable with respect to
$\FF''_0 = \sigma(X_s, s \ge 0)$.
So, conditionally on $X_0$, the random variable $D$
is independent of the pair $(Q_G, G)$.
(ii) The distribution statement about $Q_G$ follows from the strong
Markov property at $G$. Let, as usual, $\FF_{-G} =
\{A \in \sigma(X_{-s}, s \ge 0): ~ A \cap \{-G \le t\} \in \FF_{-t}\}$.
Since $Q_G = Q_0 \comp \theta_G$,
\[
P(Q_G > t) = P(Q_0 \comp \theta_G > t)
= E P(Q_0 \comp \theta_G > t \mid \FF_{-G})
= E P_{X_G}(Q_0 > t) = P_0(Q_0 > t) = e^{-\theta^* t}
\]
where the latter follows from Theorem \ref{distr0}.
(iii) This is immediate from (i) and (ii).
\qed

%Referring back to the formulae (\ref{pikk6}) and (\ref{pikk1}) the quantity $E[e^{-\alpha D}]$ has already been computed in both the spectrally negative and positive cases.
%Lemmas \ref{pikk2}--\ref{3indep} are combined and lead the following result.
\begin{proposition}[distribution of observed idle period] 
\label{pikk8}
Fix $\alpha, \beta \ge 0, \quad \alpha \ne \beta$.
\begin{description}
\item[(i)] When $Y$ is spectrally negative we have 
\begin{multline*}
  E \big[ e^{-\alpha d(0) + \beta g(0)} \mid Q_0=0\big]  
\\
= \frac{\Phi_Y(0)}{1-\Phi_Y(0)}~
  \frac{\psi_Y(1)}{\alpha -\beta}~ \left(
      \frac{\alpha}{\alpha - \psi_Y(1) }\frac{\Phi_Y(\alpha)-1}{ \Phi_Y(\alpha)}
      - \frac{\beta}{\beta - \psi_Y(1) }\frac{\Phi_Y(\beta) -1}{ \Phi_Y(\beta)}\right).
\end{multline*}
\item[(ii)] When $Y$ is spectrally positive we have
\begin{equation*}
  E \big[ e^{-\alpha d(0) + \beta g(0)} \mid Q_0=0\big] 
= 
\frac{\psi_Y'(0+)}{1-\psi_Y'(0+)}~
  \frac{\theta^*}{\alpha -\beta}~ \left(
      \frac{\alpha - \Phi_Y(\alpha)}{\alpha -\theta^* }
      - \frac{\beta-\Phi_Y(\beta)}{\beta - \theta^* }
      \right),
\end{equation*}
 where $\theta^*>0$ is defined by $\psi_Y(\theta^*) =\theta^*$.
 \end{description}
 \end{proposition}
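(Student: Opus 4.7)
I apply Lemma~\ref{pikk2} with $t_1(n)=g(n)$, $t_2(n)=d(n)$, and $M(dt)=\1(Q_t=0)\,dt$ (so that $P_M=P(\,\cdot\,|\,Q_0=0)$); combined with Lemma~\ref{iniin}, which identifies $d(0)=D$ under $P_M$, this gives identity~\eqref{almostthere}. Both parts of the proposition therefore reduce to computing
\[
E[e^{-\alpha D}\mid Q_0=0]=\frac{E[e^{-\alpha D}\1(Q_0=0)]}{1-\mu},
\]
with $1-\mu=P(Q_0=0)$ supplied by Theorem~\ref{distr0}, and with $E[e^{-\alpha D}]$ already available from~\eqref{pikk6} (SN) and~\eqref{pikk1} (SP).

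For the numerator I use Lemma~\ref{notice}, giving $\{Q_0=0\}=\{Q_G\le -G\}$, together with Lemma~\ref{3indep}(ii)--(iii): $Q_G$ is exponential of rate $\theta^*$ and is independent of $(G,D)$. Integrating out $Q_G$ yields
\[
E[e^{-\alpha D}\1(Q_0=0)] = E[e^{-\alpha D}(1-e^{\theta^* G})] = E[e^{-\alpha D}] - E[e^{-\alpha D+\theta^* G}],
\]
and by Lemma~\ref{3indep}(i) the remaining bivariate Laplace transform factors as
\[
E[e^{-\alpha D+\theta^* G}] = \int_0^\infty E_x[e^{-\alpha D}]\,E[e^{\theta^* G}\mid X_0=x]\,P(X_0\in dx),
\]
where $E_x[e^{-\alpha D}]$ is the classical first-passage transform---the scale-function expression $Z^{(\alpha)}(x)-(\alpha/\Phi_Y(\alpha))W^{(\alpha)}(x)$ for SN, and the creeping exponential $e^{-\Phi_Y(\alpha) x}$ for SP.

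The substantive step is the identification of $E[e^{\theta^* G}\mid X_0=x]$. I plan to obtain it by combining the strong Markov property at the backward hitting time $G$ (at which $X_G=0$) with the Wiener--Hopf structure of the spectrally negative L\'evy process $\Lambda_x = x-L^{-1}_x$ from~\eqref{LAMBDA}; the decisive algebraic input is the defining identity $\Phi_Y(\theta^*)=1$ in the SN case (since $\theta^*=\psi_Y(1)$), or $\psi_Y(\theta^*)=\theta^*$ in the SP case. A built-in consistency check is that integrating $E[e^{\theta^* G}\mid X_0=x]$ against $P(X_0\in dx)$ must return $E[e^{\theta^* G}]=P(Q_0>0)=\mu$, as forced by $\{Q_0>0\}=\{Q_G>-G\}$. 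Once this conditional transform is known, integration against the stationary density is a Laplace-of-scale-functions computation entirely parallel to~\eqref{pikk6}, and the defining identity for $\theta^*$ collapses the algebra into the announced form; substituting back into~\eqref{almostthere} completes the proof.

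\textbf{Main obstacle.} The one step that is not a direct manipulation of known formulas is the identification of $E[e^{\theta^* G}\mid X_0=x]$. A natural shortcut---time-reversal identifying the backward reflected process with the stationary reflection of a dual L\'evy process---is valid only when the reflected process is reversible, which at the level of the adjoint generator includes reflecting Brownian motion with drift but fails in general for reflected jump processes. The robust argument therefore proceeds via the Wiener--Hopf factorisation of $\Lambda$, with the identity $\Phi_Y(\theta^*)=1$ (or $\psi_Y(\theta^*)=\theta^*$) being what causes the intermediate expressions to collapse to the compact form of the proposition.
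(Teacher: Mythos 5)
Your reduction to $E[e^{-\alpha D}\mid Q_0=0]$ via Lemma \ref{pikk2}, Lemma \ref{iniin}, Lemma \ref{notice} and Lemma \ref{3indep} matches the paper's argument, and the identity $E[e^{-\alpha D}\1(Q_0=0)]=E[e^{-\alpha D}]-E[e^{-\alpha D+\theta^* G}]$ is exactly the point the paper also reaches. The gap is what comes next: everything hinges on the bivariate transform $E[e^{-\alpha D+\theta^* G}]$, and you propose to get it by factoring conditionally on $X_0$ and then identifying $E[e^{\theta^* G}\mid X_0=x]$ ``by the strong Markov property at $G$ and the Wiener--Hopf structure of $\Lambda$''. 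That is a plan, not a proof. The quantity $E[e^{\theta^* G}\mid X_0=x]$ is the transform of the age of the stationary excursion of $X$ straddling the origin, conditioned on its current height $x$; nothing in the Wiener--Hopf factorisation of $\Lambda_x=x-L^{-1}_x$ (a forward object defined under $P_0$, governing the inverse local time after $D$) hands you this backward conditional law, and you would need a genuine excursion-theoretic entrance-law computation that you never carry out. Your consistency check $E[e^{\theta^* G}]=\mu$ constrains but does not determine this conditional transform, so the substantive step of the proposition remains open in your write-up. (Your caution about time reversal is well taken, but it leaves you without any executed argument.)

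The paper closes precisely this step without ever touching the conditional law of $G$ given $X_0$: it applies the Palm exchange identity of Lemma \ref{pikk2} a second time, in Pitman's form for stationary excursions, to the zero set of $X$ itself (with $N_1,N_2$ the beginnings and ends of excursions of $X$; since $X$ is never zero on an interval, $M$ is Lebesgue measure and $P_M=P$), which yields
\begin{equation*}
E[e^{-\alpha D+\beta G}]=\frac{\alpha E[e^{-\alpha D}]-\beta E[e^{-\beta D}]}{\alpha-\beta},
\end{equation*}
i.e.\ \eqref{pitman}; taking $\beta=\theta^*$ expresses the needed bivariate transform through the marginal transforms already computed in \eqref{pikk6} and \eqref{pikk1}. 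If you want to salvage your route, replace the unproved identification of $E[e^{\theta^* G}\mid X_0=x]$ by this application of Pitman's formula (or derive \eqref{pitman} directly from Lemma \ref{pikk2}); the remaining algebra, substituting into \eqref{almostthere} and using $\Phi_Y(\psi_Y(1))=1$ in the spectrally negative case, respectively $\psi_Y(\theta^*)=\theta^*$ in the spectrally positive case, is then exactly as you describe.
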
     
\proof
{From} Lemma \ref{3indep} we have
that $D, Q_G$ are conditionally independent given $X_0$ and $G$. Hence
\begin{align*}
E[e^{-\theta D} \1(Q_G+G \le 0) \mid X_0, G]
&= E[e^{-\theta D} \mid X_0, G]
~ P(Q_G \le -G \mid X_0, G).
\\
&= E[e^{-\theta D} \mid X_0, G]
~ (1-e^{\theta^* G})
\\
&= E[ e^{-\theta D} - e^{-\theta D + \theta^* G} \mid X_0, G],
\end{align*}
where the second line was obtained from the facts (all consequences
of Lemma \ref{3indep}) that 
(i) $D, G$ are conditionally independent given $X_0$, 
(ii) $Q_G, G$ are also conditionally independent given $X_0$,
and (iii) $Q_G$ is independent of $X_0$ and exponentially
distributed with parameter 
\begin{equation}
\label{th*}
\theta^*
= 
\begin{cases}
\psi_Y(1), & \text{ if $Y$ is spectrally negative}\\
\psi_Y(\theta^*), & \text{ if $Y$ is spectrally positive}
\end{cases}
\end{equation}
Taking expectations we get
\[
E[e^{-\theta D} \1(Q_G+G \le 0)] 
= E[ e^{-\theta D} - e^{-\theta D + \theta^* G}],
\]
and, using Lemma \ref{notice},
\[
E[e^{-\theta D} \mid Q_0=0] =
\frac{E[ e^{-\theta D}] - E[e^{-\theta D + \theta^* G}]}{P(Q_0=0)}.
\]
We now use a version of Lemma \ref{pikk2}, formulated
for excursions of general stationary processes;
Pitman \cite[Corollary p.\ 298; references therein]{PIT}. The random
measures $N_1, N_2$ correspond to the beginnings and ends of excursions
of the stationary process $(X_t, t \in \R)$, and, since there is never
an interval of time over which $X$ is zero, the random measure $M$
coincides with the Lebesgue measure, while $P_M = P$.
Applying Pitman's result--an analogue of formula \eqref{b1}--gives
\begin{equation}
\label{pitman}
E[e^{-\alpha D + \beta G}] 
= 
\frac{\alpha E[e^{-\alpha D}] - \beta E[e^{-\beta D}] }{\alpha-\beta}.
\end{equation}
The joint Laplace transform of $D, G$ is thus expressible
in terms of the Laplace transform of $D$.
Combining the last two displays we obtain:
\[
E[e^{-\theta D} \mid Q_0=0] =
\frac{1}{P(Q_0=0)}
~
\frac{\theta^*}{\theta-\theta^*}
~
\left\{
E[ e^{-\theta^* D}] - E[ e^{-\theta D}]
\right\}.
\]
Using this in \eqref{almostthere} results in
\begin{multline}
\label{aye}
E[e^{-\alpha d(0)+\beta g(0)} \mid Q_0=0] 
=
\frac{1}{P(Q_0=0)}
~
\frac{\theta^*}{\alpha-\beta}
\\
\times
~
\left\{
\frac{\alpha}{\alpha-\theta^*}
\big( E[e^{-\theta^* D}] - E[e^{-\alpha D}] \big)
-
\frac{\beta}{\beta-\theta^*}
\big( E[e^{-\theta^* D}] - E[e^{-\beta D}] \big)
\right\}
\end{multline}
So far, the arguments are general and hold for both spectrally negative
and positive L\'evy processes $Y$, as long as $\theta^*$ is taken
as in \eqref{th*}.
Substituting next the expression for the Laplace transform of $D$
from \eqref{pikk6}, \eqref{pikk1} for the spectrally negative, respectively
positive, case, we obtain the result.
\qed

\subsection{Observed busy periods}
\label{sec6}
%Recall that a busy period of $Q$ starts at a point labelled $d(n)$ 
%and ends at $g(n+1)$. We are interested in the joint distribution of
%these two points, conditional on they event that a deterministic
%observation point $t_0$ is between them. We may take $t_0=0$, in which case,
%by the enumeration scheme of Section \ref{sec5}, we have
In this section we follow ideas in \cite{Si}.
We are interested in the distribution of the observed busy period,
as defined in \eqref{oBP}.
On the conditioning event $\{Q_0 >0\}$, we have, by our enumeration 
convention,
\[
g(0) < d(0) < 0 < g(1), \quad P-\text{a.s.}
\]
Using Lemma \ref{pikk2} with $N_1$ (resp.\ $N_2$)
the point process with points $\{d(n)\}$ (resp.\ $\{g(n\}$), we have
\begin{equation}
\label{b2}
E[e^{-\alpha g(1) + \beta d(0)} \mid Q_0 > 0]
= \frac{\alpha E[e^{-\alpha g(1)} \mid Q_0>0] - \beta E[e^{-\beta g(1)} \mid Q_0>0]  }
{\alpha-\beta},
\quad \alpha, \beta > 0.
\end{equation}
Recall the evolution equation for $Q$:
\begin{equation}
\label{recallq}
Q_t = Q_s + L(s,t]-(t-s) - \inf_{s \le u \le t} \{Q_s+L(s,u]-(u-s)\}.
\end{equation}
Let $s=0$ and assume $Q_0 > 0$. Since $X_0>0$, $P$-a.s., we have
$L(0,t]=0$ for all $0 < t < D = \inf\{r>0:X_r=0\}$, and so,
\[
Q_t = Q_0 - t - \inf_{0\le u \le t} \{Q_0-u\} = Q_0-t,
\quad
\text{a.s.\ on } 
\{Q_0 > 0,~ t < D\},
\]
which implies that
\begin{align}
\label{gcases}
g(1) =
\begin{cases}
Q_0,& \text{a.s.\ on } \{0< Q_0< D\},\\
g(1) \comp \theta_D , & \text{a.s.\ on } \{Q_0> D\}.
\end{cases}
\end{align}
Now, if $Q_0>D$, we have $Q_{D-} = Q_0-D$, so
{from} \eqref{recallq}, $Q$ evolves as
\[
Q_{D+t} = Q_0-D + L[D, D+t] - t,\quad t \ge 0,
\]
ans as long as $Q_{D+t} > 0$.
This implies that, a.s.\ on $\{Q_0 > D\}$,
\[
g(1) \comp \theta_D -D = \inf\{ t >0:~ Q_0-D + L[D, D+t] - t=0\}.
\]
Therefore \eqref{gcases} becomes
\begin{equation}
\label{gcases2}
g(1) =
\begin{cases}
Q_0,& \text{a.s.\ on } \{0< Q_0< D\},\\
D +  \inf\{ t >0:~ Q_0-D + L[D, D+t] - t=0\} , & \text{a.s.\ on } \{Q_0> D\}.
\end{cases}
\end{equation}
Consider now the inverse local time process, with the origin 
of time placed at $D$,
i.e.\
\[
L^{-1}_{D;x} := \inf\{ t > 0:~ L[D,D+t]> x\}, \quad x \ge 0.
\]
By the strong Markov property for $X$ at the stopping time $D$ we
have that the $P$-distribution of $(L^{-1}_{D;x}, ~ x \ge 0)$
is the same as the $P_0$-distribution of $(L^{-1}_x, ~ x \ge 0)$,
which has been identified in Propositions \ref{loc-} and \ref{loc+}:
%\[
%E [ e^{-q L^{-1}_{D;x}}] = 
%\begin{cases}
%e^{-x q/\Phi_Y(q)}, & \quad \text{ if $Y$ is spectrally negative},
%\\
%e^{-x\Phi_Y(q)}, & \quad \text{ if $Y$ is spectrally positive}.
%\end{cases}
%\]
Thus, $(L^{-1}_{D;x}, ~ x \ge 0)$ is a (proper) subordinator.
Consider next the spectrally negative L\'evy process
\[
\widetilde \Lambda_x:= x- L^{-1}_{D;x}, \quad x \geq 0.
\]
Notice that $P(\widetilde \Lambda_0=1)$.
The Laplace exponent of $\widetilde \Lambda$ is the function
$\psi_\Lambda$ of \eqref{psil}.
Define the hitting time of level $-a$ by $\widetilde \Lambda$
\[
\sigma(\widetilde \Lambda; a) := 
\inf\{x>0:~ \widetilde \Lambda_x < -a\}, \quad a> 0,
\]
Formula \eqref{tau-} gives us the Laplace transform of $\sigma(\widetilde \Lambda; a)$
in terms of the scale functions of $\widetilde \Lambda$, defined
in \eqref{LT} and \eqref{Ztransform}. Combining them, we obtain
\begin{equation}
\int_0^\infty e^{-\theta a}~ E [e^{-q \sigma(\widetilde \Lambda; a)}] ~da
= 
\frac{1}{\psi_\Lambda(\theta)-q}
\left( \frac{\psi_\Lambda(\theta)}{\theta} - \frac{q}{\Phi_\Lambda(\theta)}
\right) 
=: H^{(q)}(\theta).
\label{H}
\end{equation}
As can be easily seen from Lemma \ref{simple},
for any $a >0$,
\[
\inf\{t>0:~ t-L[D,D+t] \ge a\}
=
\inf\{x>0:~ \widetilde \Lambda_x < -a\} + a = \sigma(\widetilde \Lambda; a)+a.
\]
Using this in \eqref{gcases2}, we obtain
\[
g(1) =
\begin{cases}
Q_0,& \text{a.s.\ on } \{0< Q_0< D\},\\
Q_0 + \sigma(\widetilde \Lambda; Q_0-D), & \text{a.s.\ on } \{Q_0> D\}.
\end{cases}
\]
It is useful to keep
in mind that $\widetilde \Lambda$ is independent of $Q_0-D$, by
the strong Markov property of $X$ at $D$.
We are now ready to compute the Laplace transform appearing on the right
hand side of \eqref{b2}:
\begin{align}
E[e^{-\alpha g(1)}; Q_0>0]
&=
E[e^{-\alpha g(1)}; 0< Q_0< D]
+
E[e^{-\alpha g(1)}; Q_0>D]
\nonumber
\\
&= E[e^{-\alpha Q_0}; 0< Q_0< D]
+
E[e^{-\alpha (Q_0+\sigma(\widetilde \Lambda; Q_0-D)}; Q_0>D]
\nonumber
\\
&= E[e^{-\alpha Q_0}; Q_0>0] 
-
E[e^{-\alpha Q_0} ~(1-e^{-\alpha \sigma(\widetilde \Lambda; Q_0-D)}); Q_0>D]
\label{lotsofterms}
\end{align}
Recall that $P(Q_0 > x) = \mu e^{-\theta^* x}$, and so
 \begin{equation}
 E[e^{-\alpha Q_0}; \, Q_0 >0] = \mu \frac{\theta^*}{\alpha + \theta^*}.
 \label{firstterm}
 \end{equation}
 To compute the second and the third terms we need some elementary properties of exponentially distributed random variables which we state without proof.
 \begin{lemma} \label{pikk10}
   Let $T$ be an exponentially distributed r.v. with parameter $\lambda$
   and $(X,Y)$, $X \geq 0$, $Y\geq 0$, be a two-dimensional r.v. independent of $T$.
   Then $X$ and $T-X-Y$ are independent given $T>X+Y$. Moreover,
   $$E[e^{-\alpha (T-X-Y)} \, |\, T>X+Y]= \frac{\lambda}{\alpha + \lambda},$$
   $$E[e^{-\alpha X}; \, T>X+Y]= E[e^{-(\alpha + \lambda) X - \lambda Y}].$$
 \end{lemma}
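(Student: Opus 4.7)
My plan is to reduce all three assertions to the memoryless property of the exponential distribution, via a single master identity obtained by conditioning on $(X,Y)$ and integrating out $T$. Concretely, for any bounded measurable $f,g:\R_+\to\R$, I would first write
\[
E[f(X)\,g(T-X-Y);\,T>X+Y]
= E\!\left[f(X)\int_{X+Y}^\infty g(t-X-Y)\,\lambda e^{-\lambda t}\,dt\right],
\]
using the independence of $T$ and $(X,Y)$. The substitution $s=t-X-Y$ then factors the inner integral as $e^{-\lambda(X+Y)}\int_0^\infty g(s)\,\lambda e^{-\lambda s}\,ds$, yielding the master identity
\[
E[f(X)\,g(T-X-Y);\,T>X+Y]
= E\!\left[f(X)\,e^{-\lambda(X+Y)}\right]\cdot\int_0^\infty g(s)\,\lambda e^{-\lambda s}\,ds.
\]

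Specialising $f\equiv g\equiv 1$ gives $P(T>X+Y)=E[e^{-\lambda(X+Y)}]$, and dividing through shows that, conditionally on $\{T>X+Y\}$, the pair $(X,T-X-Y)$ factors as a product, with $T-X-Y$ exponentially distributed with parameter $\lambda$ and independent of $X$. This proves the conditional independence claim. Taking $f\equiv 1$ and $g(s)=e^{-\alpha s}$ immediately gives the Laplace transform identity $E[e^{-\alpha(T-X-Y)}\mid T>X+Y]=\lambda/(\alpha+\lambda)$.

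For the remaining identity, I would take $f(x)=e^{-\alpha x}$ and $g\equiv 1$ in the master identity, which produces
\[
E[e^{-\alpha X};\,T>X+Y] = E\!\left[e^{-\alpha X}\,e^{-\lambda(X+Y)}\right] = E\!\left[e^{-(\alpha+\lambda)X-\lambda Y}\right],
\]
exactly as claimed. There is no genuine obstacle in the proof; the only care needed is to condition on $(X,Y)$ before integrating out $T$, so that the memoryless property can be applied cleanly, after which Fubini does the rest.
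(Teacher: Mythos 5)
Your proof is correct. The paper states this lemma without proof (``we state without proof''), so there is no argument of the authors' to compare against; your master identity---conditioning on $(X,Y)$, integrating out $T$, and substituting $s=t-X-Y$ to peel off the factor $e^{-\lambda(X+Y)}$---is precisely the standard computation the authors evidently had in mind, and it cleanly delivers all three assertions at once (the factorisation gives the conditional independence, $g(s)=e^{-\alpha s}$ gives the exponential Laplace transform, and $f(x)=e^{-\alpha x}$ gives the tilted identity).
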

%The next Lemma shows to which triple $(T,X,Y)$ the above lemma will be applied.
Use \eqref{recallq} once more with $s=G=\sup\{t < 0:~X_t=0\}$, and $t=0$,
taking into account the fact that $L$ is not
supported on $(G,0)$, to obtain  
\[
Q_0= Q_G + G, \quad \text{ a.s. on }
\{Q_0>0\}.
\]
Since $Q_G$ is exponentially distributed with parameter $\theta^*$ and independent of $(G, D)$ 
(from Lemma \ref{3indep}), we have, applying Lemma \ref{pikk10}, the following result:
 \begin{lemma} \label{pikk11}
   Given $Q_0>D$, the r.v.'s $Q_0-D$ and $D$ are independent. Moreover,
   $$E[e^{-\alpha(Q_0-D)}\, |\, Q_0>D]= \frac{\theta^*}{\alpha + \theta^*},$$
   $$E[e^{-\alpha D}; \, Q_0>D] = E[e^{-(\alpha + \theta^*)D + \theta^*G}].$$
 \end{lemma}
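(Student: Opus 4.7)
The plan is to reduce Lemma \ref{pikk11} directly to Lemma \ref{pikk10} by identifying the exponential random variable with $Q_G$ and the two-dimensional auxiliary variable with $(D,-G)$. Everything will then follow by straightforward substitution.

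First, I would recall the path identity $Q_0 = Q_G + G$ on $\{Q_0>0\}$, which was derived just before the statement (using \eqref{recallq} with $s=G$, $t=0$, together with the fact that the support of $L$ avoids $(G,0)$). Combining this with Lemma \ref{notice}, the event $\{Q_0>D\}$ becomes $\{Q_G + G > D\}$, i.e.\ $\{Q_G > D + (-G)\}$. Set $T := Q_G$, $X := D$, $Y := -G$ (note $Y \ge 0$ since $G \le 0$). By Lemma \ref{3indep}, $T$ is exponentially distributed with parameter $\theta^*$ and is independent of the pair $(G,D)$, hence of $(X,Y)$. The hypotheses of Lemma \ref{pikk10} are therefore in force with $\lambda = \theta^*$.

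Next, observe that $Q_0 - D = Q_G + G - D = T - X - Y$, so conditioning on $\{Q_0 > D\}$ is exactly conditioning on $\{T > X + Y\}$. The first part of Lemma \ref{pikk10} immediately gives that $T - X - Y = Q_0 - D$ is independent of $X = D$ given $T > X + Y$, together with
\[
E[e^{-\alpha(Q_0-D)} \mid Q_0 > D] = \frac{\theta^*}{\alpha + \theta^*}.
\]
For the second identity, the last line of Lemma \ref{pikk10} yields
\[
E[e^{-\alpha D}; Q_0 > D] = E[e^{-\alpha X}; T > X+Y] = E[e^{-(\alpha + \theta^*) X - \theta^* Y}] = E[e^{-(\alpha+\theta^*)D + \theta^* G}],
\]
which is the second claimed formula.

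There is no substantive obstacle here: the only thing to be careful about is the correct bookkeeping of signs (since $G \le 0$, one sets $Y = -G \ge 0$) and invoking the independence of $Q_G$ from $(G,D)$ in exactly the form needed, both of which are already provided by Lemma \ref{3indep}. Everything else is a direct quotation of Lemma \ref{pikk10}.
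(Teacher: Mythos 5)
Your proof is correct and follows exactly the paper's route: identify $T=Q_G$ (exponential with rate $\theta^*$, independent of $(G,D)$ by Lemma \ref{3indep}), $X=D$, $Y=-G$, use $Q_0=Q_G+G$ on $\{Q_0>0\}$ so that $\{Q_0>D\}=\{T>X+Y\}$, and then read off both formulas from Lemma \ref{pikk10}. Your explicit check via Lemma \ref{notice} that the two events agree (also on $\{Q_0=0\}$) is a nice touch the paper leaves implicit.
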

Using Lemma $\ref{pikk11}$, we write the last term of 
\eqref{lotsofterms} as follows:
\begin{align}
E[e^{-\alpha Q_0} &~(1-e^{-\alpha \sigma(\widetilde \Lambda; Q_0-D)}); Q_0>D]
\nonumber \\
&=
P(Q_0>D) ~
E[e^{-\alpha D} ~ e^{-\alpha (Q_0-D)} ~
(1-e^{-\alpha \sigma(\widetilde \Lambda; Q_0-D)}) \mid Q_0>D]
\nonumber \\
&=P(Q_0>D) ~
E[e^{-\alpha D} \mid Q_0>D]
~
E[ e^{-\alpha (Q_0-D)} ~ (1-e^{-\alpha \sigma(\widetilde \Lambda; Q_0-D)}) \mid Q_0>D]
\nonumber \\
&=
E[e^{-(\alpha + \theta^*)D + \theta^*G}]
~
E[ e^{-\alpha V} ~ (1-e^{-\alpha \sigma(\widetilde \Lambda; V)})],
\label{thisandthat}
\end{align}
where, in the last term, we introduced a random variable $V$, exponentially 
distributed with parameter $\theta^*$, independent of everything else
(due to the fact that $Q_0-D$, conditionally on being positive, is exponential
with parameter $\theta^*$, independent of $\widetilde \Lambda$).
The first term of \eqref{thisandthat} can be computed as in \eqref{pitman}.
We have, for all $\alpha, \beta \ge 0, \quad \alpha \ne \beta$,
\begin{equation*}
\label{pikk7}
E [e^{-\alpha D + \beta G}]  = \frac{\Phi_Y(0)}{\alpha -\beta} \,\Big( \frac{\alpha}{\Phi_Y(\alpha)}
      - \frac{\beta}{\Phi_Y(\beta)}\Big),
  \end{equation*}
and for the spectrally positive case,
  for all $\alpha, \beta \ge 0, \quad \alpha \ne \beta$,
\begin{equation*}
\label{pikk17}
E [e^{-\alpha D + \beta G}]
 = \psi_Y'(0+) \frac{\Phi_Y(\alpha) - \Phi_Y(\beta)}{\alpha - \beta}.
  \end{equation*}
Note that taking account of the definition \eqref{psil}
of $\psi_\Lambda$ for both the spectrally negative and positive cases, 
and the fact that $\psi_\Lambda(\theta^*)=0$,
one sees that generically for both cases, 
 for all $\alpha \ge 0$,
\begin{equation}
\label{pikk-andreas}
E [e^{-(\alpha +\theta^*) D + \theta^* G}] 
 =   \frac{\mu}{\alpha} (\alpha - \psi_\Lambda(\alpha+\theta^*)).
  \end{equation}
For the second term of \eqref{thisandthat} we have, using \eqref{H},
\begin{align}
E[ e^{-\alpha V} (1-e^{-\alpha \sigma(\widetilde \Lambda; V)})]
&= 
\frac{\theta^*}{\alpha+\theta^*} 
-
\theta^* \int_0^\infty e^{-\theta^* v} e^{-\alpha v} 
E[e^{-\alpha \sigma(\widetilde \Lambda; v)}] dv
\nonumber
\\
&=
\frac{\theta^*}{\alpha+\theta^*}
-
\theta^* H^{(\alpha)}(\alpha+\theta^*)
\nonumber
\\
&=
\frac{\theta^*}{\alpha+\theta^*}
-
\frac{\theta^*}{\psi_\Lambda(\alpha+\theta^*)-\alpha}
\left(
\frac{\psi_\Lambda(\alpha+\theta^*)}{\alpha+\theta^*}
-\frac{\alpha}{\Phi_\Lambda(\alpha)}
\right)
\nonumber
\\
&= \frac{\theta^*}{\alpha+\theta^*} ~
\frac{\alpha+\theta^*-\Phi_\Lambda(\alpha)}{\Phi_\Lambda(\alpha)}~
\frac{\alpha}{\psi_\Lambda(\alpha+\theta^*)-\alpha} 
\label{pikk-takis}
\end{align}
Multiplying \eqref{pikk-andreas} and \eqref{pikk-takis}
we obtain the following expression for \eqref{thisandthat}:
\[
E[e^{-\alpha Q_0} (1-e^{-\alpha \sigma(\widetilde \Lambda; Q_0-D)}); Q_0>D]
=
\frac{\mu\theta^*}{\alpha+\theta^*}
-\frac{\mu\theta^*}{\Phi_\Lambda(\alpha)}
\]
This, together with \eqref{firstterm} and \eqref{lotsofterms}, yields:
\[
E [e^{-\alpha g(1)} \mid Q_0>0] = \frac{\theta^*}{\Phi_\Lambda(\alpha)}.
\]
Using \eqref{b2},
we finally come to rest at the following main result. 
\begin{proposition}[distribution of observed busy period]
 \label{pikk12}
  For $\alpha, \beta >0$, $\alpha \ne \beta$,
  $$E[e^{-\alpha g(1) + \beta d(0)} \, |\, Q_0>0] = \frac{\theta^*}{\alpha - \beta}
  \Big( \frac{\alpha}{\Phi_\Lambda(\alpha)} - \frac{\beta}{\Phi_\Lambda(\beta)} \Big)$$
  where $\Phi_\Lambda$ is the right inverse of $\psi_\Lambda$ which is given in (\ref{psil}) and (cf. Theorem \ref{distr0})
\begin{description}
\item[(i)] $\theta^*>0$ is defined by $\psi_Y(1)$ in the case that $Y$ is spectrally negative,
\item[(ii)]  $\theta^*>0$ is defined by $\psi_Y(\theta^*) =\theta^*$  in the case that $Y$ is spectrally positive.
 \end{description}
\end{proposition}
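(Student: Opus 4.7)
The plan is to reduce the computation of the joint Laplace transform of $(g(1),d(0))$ conditional on $\{Q_0>0\}$ to the single Laplace transform of $g(1)$ conditional on $\{Q_0>0\}$, via an application of Lemma \ref{pikk2}. I take $N_1$ to be the point process with atoms at $\{d(n)\}$ and $N_2$ the one with atoms at $\{g(n)\}$; the associated measure $M$ coincides with $\1(Q_t>0)\,dt$, whose Palm measure is $P(\,\cdot \mid Q_0>0)$. This immediately gives the formula \eqref{b2}, so the heart of the matter is to show
\[
E[e^{-\alpha g(1)}\mid Q_0>0] \;=\; \frac{\theta^*}{\Phi_\Lambda(\alpha)},
\]
with $\Phi_\Lambda$ the right inverse of $\psi_\Lambda$ from \eqref{psil}, after which a two-line substitution yields the proposition.

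To compute $E[e^{-\alpha g(1)}\mid Q_0>0]$, I split the event $\{Q_0>0\}$ into $\{0<Q_0<D\}$ and $\{Q_0>D\}$. On the first, since $L(0,D)=0$ and $Q$ falls linearly at rate $1$, we have $g(1)=Q_0$. On the second, $Q$ reaches level $Q_0-D$ exactly at time $D$, after which the strong Markov property at $D$ allows us to write $g(1)=Q_0+\sigma(\widetilde\Lambda;Q_0-D)$, where $\widetilde\Lambda_x:=x-L^{-1}_{D;x}$ is a spectrally negative Lévy process (independent of $\FF_D$) whose Laplace exponent is $\psi_\Lambda$ from \eqref{psil}, and $\sigma(\widetilde\Lambda;a)$ is its hitting time of $-a$. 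This gives the decomposition
\[
E[e^{-\alpha g(1)};Q_0>0] = E[e^{-\alpha Q_0};Q_0>0] - E[e^{-\alpha Q_0}(1-e^{-\alpha\sigma(\widetilde\Lambda;Q_0-D)});Q_0>D].
\]
The first piece is immediate from $P(Q_0>x)=\mu e^{-\theta^* x}$ (Theorem \ref{distr0}), giving $\mu\theta^*/(\alpha+\theta^*)$.

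The hard part will be the mixed term. I would use the identity $Q_0=Q_G+G$ on $\{Q_0>0\}$ together with Lemma \ref{3indep}(iii) (i.e.\ $Q_G$ exponential, $\theta^*$, independent of $(G,D)$) to apply Lemma \ref{pikk10}, obtaining both that $Q_0-D$ is independent of $D$ on $\{Q_0>D\}$ with exponential law of rate $\theta^*$, and the key identity $E[e^{-\alpha D};Q_0>D] = E[e^{-(\alpha+\theta^*)D+\theta^* G}]$ (Lemma \ref{pikk11}). This lets me factor the conditional expectation into two independent pieces. The first factor, $E[e^{-(\alpha+\theta^*)D+\theta^* G}]$, is evaluated by the analogue of Pitman's identity \eqref{pitman} applied to the excursions of $X$ (so $P_M$ becomes Lebesgue measure and $P_M=P$), combined with the expressions for $E[e^{-\theta D}]$ from \eqref{pikk6} or \eqref{pikk1}; after using $\psi_\Lambda(\theta^*)=0$ this simplifies uniformly (spectrally positive/negative) to $(\mu/\alpha)(\alpha-\psi_\Lambda(\alpha+\theta^*))$.

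The second factor, $E[e^{-\alpha V}(1-e^{-\alpha\sigma(\widetilde\Lambda;V)})]$ for $V$ exponential of rate $\theta^*$ independent of $\widetilde\Lambda$, is evaluated by integrating the Laplace transform \eqref{H} (in disguise, the scale-function expression coming from Lemma \ref{one-sided-exit}) against the density of $V$; this gives the closed form $\frac{\theta^*}{\alpha+\theta^*}\cdot\frac{\alpha+\theta^*-\Phi_\Lambda(\alpha)}{\Phi_\Lambda(\alpha)}\cdot\frac{\alpha}{\psi_\Lambda(\alpha+\theta^*)-\alpha}$. The main obstacle is then purely algebraic: multiplying the two factors and checking that the $\psi_\Lambda(\alpha+\theta^*)-\alpha$ terms cancel cleanly, leaving $\mu\theta^*/(\alpha+\theta^*) - \mu\theta^*/\Phi_\Lambda(\alpha)$. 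Subtracting this from $E[e^{-\alpha Q_0};Q_0>0]$ and dividing by $P(Q_0>0)=\mu$ yields $\theta^*/\Phi_\Lambda(\alpha)$, and plugging into \eqref{b2} finishes the proof.
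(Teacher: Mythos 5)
Your proposal is correct and follows essentially the same route as the paper's own proof: the same application of Lemma \ref{pikk2} with $N_1=\{d(n)\}$, $N_2=\{g(n)\}$ and $M(dt)=\1(Q_t>0)\,dt$, the same pathwise decomposition of $g(1)$ over $\{0<Q_0<D\}$ and $\{Q_0>D\}$ via $\widetilde\Lambda$, the same use of Lemmas \ref{pikk10}--\ref{pikk11} and the Pitman-type identity to factor the mixed term, and the same evaluation via \eqref{H} leading to $E[e^{-\alpha g(1)}\mid Q_0>0]=\theta^*/\Phi_\Lambda(\alpha)$. No gaps to report.
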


\subsection{Typical idle and busy periods}
We now consider the problem of identifying the distribution of a typical
idle and a typical busy period of $Q$. We place the origin of time at
the beginning of such a period, by considering the appropriate Palm 
probability. Let $N_g$ (resp.\ $N_d$)
be the point process with points $\{g(n)\}$ (resp.\ $\{d(n)\}$), the
beginnings of idle (resp.\ of busy) periods, 
and let $P_g$ (resp.\ $P_d$) be the Palm probability with respect
to $N_g$ (resp.\ $P_d$).
\begin{figure}[h]
\begin{center}
\epsfig{file=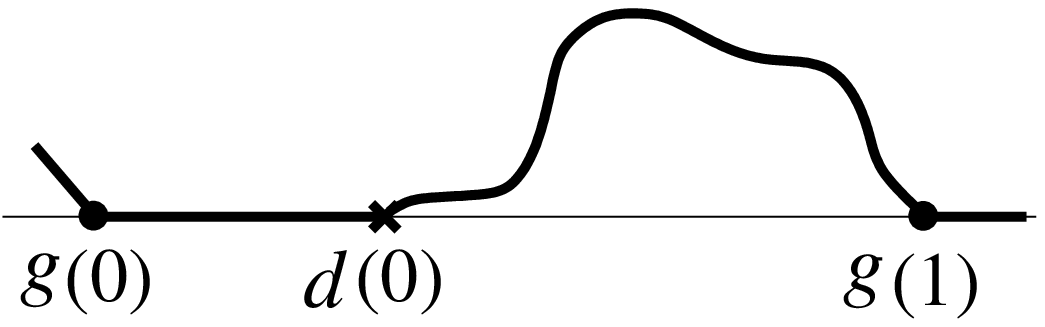,width=5cm}
\end{center}
\end{figure}
\\
Using \eqref{aaa} we have
\[
\frac{1-\mu}{\lambda} \alpha E[e^{-\alpha d(0)} | Q_0=0]
= 1- E_g[e^{-\alpha d(0)}],
\]
where $\mu=P(Q_0>0)$ and $\lambda$ is the common rate of $N_g$ and $N_d$.
The right side is precisely what we need. 
Everything in the left side is known (see Prop.\ \ref{pikk8}) except
the rate $\lambda$. Consider first the case when $Y$ is spectrally negative. 
Using Prop.\ \ref{pikk8}(i) with $\beta=0$ we get
\[
\frac{1-\mu}{\lambda} 
\frac{\Phi_Y(0)}{1-\Phi_Y(0)} \psi_Y(1) \frac{\alpha}{\alpha-\psi_Y(1)}
\frac{\Phi_Y(\alpha)-1}{\Phi_Y(\alpha)}
= 1- E_g[e^{-\alpha d(0)}].
\]
Taking limits as $\alpha \to \infty$--and since 
$\Phi_Y(\alpha) \to \infty$--we find the value of 
$\lambda$ and so the Laplace transform 
$E_g[e^{-\alpha d(0)}]$ of the typical idle period.
The result is in Proposition \ref{pikk13}(i) below.

We repeat the procedure for the spectrally positive case
and, using Proposition \ref{pikk8}(ii), we obtain:
\[
\frac{1-\mu}{\lambda} 
\frac{\psi_Y'(0)}{1-\psi_Y'(0)}\, \theta^*\, 
\frac{\alpha-\Phi_Y(\alpha)}{\alpha-\theta^*}
= 1- E_g[e^{-\alpha d(0)}].
\]
Note that $\lim_{\theta \to \infty} \psi_Y(\theta)/\theta = \infty$ if
$Y$ is of unbounded variation, and so
$\lim_{\alpha \to \infty} \Phi_Y(\alpha)/\alpha = 0$.
Thus, we can find $\lambda$ and $E_g[e^{-\alpha d(0)}]$--see
Proposition \eqref{pikk13}(ii) below.

But if $Y$ is spectrally positive (with non-monotone paths) and
of bounded variation then $L$ is absolutely continuous
and has a drift $d_Y$--see \eqref{abs-cts}.
We can easily see, e.g.\ from \eqref{Ybv}, that
\[
\psi_Y(\theta) = \log E [e^{-\theta (Y_1-Y_0)}]
= |d_Y| \theta - \int_0^\infty (1-e^{-\theta y}) \Pi(dy),
\]
and, since $\int_0^\infty (y \wedge 1) \Pi(dy) < \infty$, we 
obtain $\lim_{\theta \to \infty} \psi_Y(\theta)/\theta = |d_Y|$.
So $\lim_{\alpha \to \infty} \Phi_Y(\alpha)/\alpha = 1/|d_Y|$.
Again, we can find $\lambda$ and $E_g[e^{-\alpha d(0)}]$--see
Proposition \eqref{pikk13}(iii) below.

\begin{proposition}[distribution of typical idle period]
\label{pikk13}
Fix $\alpha > 0$. Let $P_g$ be Palm probability 
with respect to the beginnings of idle periods of $Q$. 
\begin{description}
\item[(i)] When $Y$ is spectrally negative we have
\[
\lambda = \Phi_Y(0) \psi_Y(1),
\qquad
E_g[e^{-\alpha d(0)}] = 1 - \frac{\alpha}{\Phi_Y(\alpha)}
~
\frac{\Phi_Y(\alpha)-1}{\alpha-\psi_Y(1)}.
\]
\item[(ii)] When $Y$ is spectrally positive and $\int_0^1 y \Pi(dy)=\infty$,
we have
\[
\lambda = \psi_Y'(0) \theta^*,
\qquad
E_g[e^{-\alpha d(0)}] = 1 - \theta^*~
\frac{\alpha-\Phi_Y(\alpha)}{\alpha-\theta^*},
\]
where $\theta^*>0$ satisfies $\theta^*=\psi_Y(\theta^*)$.
\item[(iii)] When $Y$ is spectrally positive and $\int_0^1 y \Pi(dy)<\infty$,
we have
\[
\lambda = \psi_Y'(0) \theta^* \left(1-\frac{1}{|d_Y|}\right),
\qquad
E_g[e^{-\alpha d(0)}] = 1 - \frac{\theta^*}{1-\frac{1}{|d_Y|}}~
\frac{\alpha-\Phi_Y(\alpha)}{\alpha-\theta^*},
\]
where $\theta^*>0$ satisfies $\theta^*=\psi_Y(\theta^*)$,
and $d_Y$ is the drift defined in \eqref{Ybv}-\eqref{abs-cts}.
\end{description}
\end{proposition}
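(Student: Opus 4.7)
The plan is to invert the exchange-formula identity
\[
\frac{1-\mu}{\lambda}\,\alpha\, E[e^{-\alpha d(0)}\mid Q_0=0] \;=\; 1 - E_g[e^{-\alpha d(0)}]
\]
already derived just above the proposition. Since $d(0)>0$ holds $P_g$-a.s., dominated convergence forces $E_g[e^{-\alpha d(0)}]\to 0$ as $\alpha\to\infty$. Hence the intensity $\lambda$ is pinned down by computing the limit of the left-hand side, and, once $\lambda$ is known, the same identity rearranges to deliver $E_g[e^{-\alpha d(0)}]$.

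Into this identity I would substitute the explicit conditional Laplace transforms obtained by setting $\beta=0$ in Proposition \ref{pikk8}(i)--(ii), together with $\mu=\Phi_Y(0)$ in the spectrally negative case and $\mu=\psi_Y'(0+)$ in the spectrally positive case. After cancellation the left-hand side becomes
\[
\frac{\Phi_Y(0)\,\psi_Y(1)}{\lambda}\cdot\frac{\alpha}{\alpha-\psi_Y(1)}\cdot\frac{\Phi_Y(\alpha)-1}{\Phi_Y(\alpha)}
\]
in case (i), and
\[
\frac{\psi_Y'(0+)\,\theta^*}{\lambda}\cdot\frac{\alpha-\Phi_Y(\alpha)}{\alpha-\theta^*}
\]
in cases (ii)--(iii). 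The problem is thus entirely reduced to knowing the behaviour of $\Phi_Y(\alpha)/\alpha$ as $\alpha\to\infty$.

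The three cases are distinguished by exactly this asymptotic. When $Y$ is spectrally negative, or spectrally positive of unbounded variation, one has $\psi_Y(\theta)/\theta\to\infty$, so $\Phi_Y(\alpha)/\alpha\to 0$ and the respective limits of the displayed expressions force $\lambda=\Phi_Y(0)\psi_Y(1)$ and $\lambda=\psi_Y'(0+)\theta^*$. In the spectrally positive bounded-variation case, the decomposition \eqref{Ybv} gives $\psi_Y(\theta)=|d_Y|\theta-\int_0^\infty(1-e^{-\theta y})\Pi(dy)$, so $\psi_Y(\theta)/\theta\to|d_Y|$ by dominated convergence (using $\int_0^1 y\,\Pi(dy)<\infty$ and $\Pi((1,\infty))<\infty$), whence $\Phi_Y(\alpha)/\alpha\to 1/|d_Y|$; this produces the extra factor $1-1/|d_Y|$ in $\lambda$ and correspondingly in the normalising constant of the Laplace transform.

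Once $\lambda$ is identified in each of the three cases, the formula for $E_g[e^{-\alpha d(0)}]$ is read off by direct algebraic rearrangement of the displayed identity; in case (i), for instance, the factor $\frac{\alpha}{\alpha-\psi_Y(1)}\cdot\frac{\Phi_Y(\alpha)-1}{\Phi_Y(\alpha)}$ is trivially rewritten as $\frac{\alpha}{\Phi_Y(\alpha)}\cdot\frac{\Phi_Y(\alpha)-1}{\alpha-\psi_Y(1)}$. The main obstacle is really just the bookkeeping in case (iii), where one has to identify the asymptotic slope of $\psi_Y$ correctly; every other step is an algebraic manipulation of already-available formulae.
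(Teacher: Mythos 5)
Your proposal is correct and follows the paper's own route exactly: the exchange-formula identity derived from \eqref{aaa}, substitution of Proposition \ref{pikk8} at $\beta=0$ (with the $1-\mu$ cancellation), and the limit $\alpha\to\infty$ to pin down $\lambda$ before rearranging for $E_g[e^{-\alpha d(0)}]$. One small caveat: your claim that $\psi_Y(\theta)/\theta\to\infty$ whenever $Y$ is spectrally negative fails for bounded variation paths (e.g.\ Example 3, where $\psi_Y(\theta)/\theta\to b$), but case (i) only requires $\Phi_Y(\alpha)\to\infty$ — which always holds and is what the paper invokes — so the conclusion is unaffected.
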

\noindent
\begin{remark} {\rm By Assumption [A2]--see Section \ref{sec3}--we
have $d_Y < -1$ and so the constant above is positive.}
\end{remark}

In the same vein, we obtain the Laplace transform 
$E_d[e^{-\alpha g(1)}]$ of a typical busy period. Notice that,
under $P_d$, we  have $d(0) = 0$, and so the first
busy period to the right of the origin of time
is the interval $(d(0), g(1))$.
We have,
\[
\frac{\mu}{\lambda} \alpha E[e^{-\alpha g(1)}| Q_0>0]
= 1- E_d[e^{-\alpha g(1)}].
\]
Using Proposition \ref{pikk12} with $\beta=0$, we have
\[
 \frac{\mu\theta^*}{\lambda} \frac{\alpha}{\Phi_\Lambda(\alpha)}
= 1- E_d[e^{-\alpha g(1)}].
\]
{From} the expression \eqref{psil} for $\psi_\Lambda(\theta)$ we find
$\lim_{\theta\to\infty} \psi_\Lambda(\theta)/\theta = 1$.
So, $\lim_{\alpha\to\infty} \alpha/\Phi_\Lambda(\alpha) = 1$.
Therefore:
\begin{proposition}[distribution of typical busy period]
\label{pikk14}
Fix $\alpha > 0$. Let $P_d$ be Palm probability
with respect to the beginnings of busy periods of $Q$.
Let $\psi_\Lambda$ be defined as in \eqref{psil}, and let
$\Phi_\Lambda$ be its right inverse function.
Then
\[
\lambda = \mu \theta^*,
\qquad 
E_d[e^{-\alpha g(1)}] = 1 - \frac{\alpha}{\Phi_\Lambda(\alpha)},
\]
where $\mu = \Phi_Y(0)$, $\theta^* = \psi_Y(1)$, if $Y$ is spectrally negative;
and $\mu = \psi_Y'(0)$, $\theta^*>0$ is defined through 
$\theta^*=\psi_Y(\theta^*)$, if $Y$ is spectrally positive.
\end{proposition}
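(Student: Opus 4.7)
The plan is to derive Proposition \ref{pikk14} from Proposition \ref{pikk12} via the Palm inversion formula of Lemma \ref{pikk2}, in the same spirit as the derivation of Proposition \ref{pikk13} from Proposition \ref{pikk8}, but with the roles of busy and idle periods interchanged. I apply Lemma \ref{pikk2} with $N_1 := N_d$ (beginnings of busy periods) and $N_2 := N_g$ (their ends), so that the associated stationary measure has Lebesgue density $M(dt)/dt = \1(Q_t > 0)$; then $\lambda_M = \mu = P(Q_0 > 0)$ and $P_M = P(\cdot \mid Q_0 > 0)$. Specializing \eqref{aaa} to this setting, with $t_1(0), t_2(0)$ replaced by $d(0), g(1)$, yields
\[
\mu\, E[e^{-\alpha g(1)} \mid Q_0 > 0] \;=\; \frac{\lambda}{\alpha}\left(1 - E_d[e^{-\alpha g(1)}]\right),
\]
where $\lambda$ is the common intensity of $N_d$ and $N_g$.

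To compute the left-hand side explicitly, I set $\beta = 0$ in Proposition \ref{pikk12}. Since $\Phi_\Lambda(0) = \theta^*$, read off from \eqref{eeee}, the second term in that formula vanishes, leaving $E[e^{-\alpha g(1)} \mid Q_0>0] = \theta^*/\Phi_\Lambda(\alpha)$. Substituting, I obtain the key identity
\[
\frac{\mu\theta^*}{\lambda}\,\frac{\alpha}{\Phi_\Lambda(\alpha)} \;=\; 1 - E_d[e^{-\alpha g(1)}].
\]
It then remains to identify $\lambda$ by letting $\alpha \to \infty$. The right-hand side tends to $1$ because a busy period is strictly non-degenerate, i.e.\ $g(1) > 0$ $P_d$-a.s. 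For the left-hand side, \eqref{psil} gives $\psi_\Lambda(\theta)/\theta = 1 - 1/\Phi_Y(\theta) \to 1$ in the spectrally negative case (since $\Phi_Y(\theta)\to\infty$), and $\psi_\Lambda(\theta)/\theta = 1 - \Phi_Y(\theta)/\theta \to 1$ in the spectrally positive case with unbounded variation (since $\psi_Y(\theta)/\theta \to \infty$). Inversion yields $\alpha/\Phi_\Lambda(\alpha)\to 1$, whence $\lambda = \mu\theta^*$, and substituting back delivers the announced Laplace transform. The explicit values of $\mu$ and $\theta^*$ in each case are those read off \eqref{ltrate} and Theorem \ref{distr0}.

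The main technical point is the asymptotic $\alpha/\Phi_\Lambda(\alpha)\to 1$, which requires the case-by-case examination of \eqref{psil} described above. The subcase of spectrally positive $Y$ with bounded variation is the most delicate, since there $\Phi_Y(\alpha)/\alpha \to 1/|d_Y|$ rather than $0$; a fully rigorous treatment may require either restricting to the unbounded-variation case or splitting Proposition \ref{pikk14} into three parallel cases, in the style of Proposition \ref{pikk13}. A secondary, essentially trivial point is the justification of $g(1) > 0$ strictly under $P_d$, which is immediate from the alternating structure of the idle and busy intervals introduced in Section \ref{sec5}.
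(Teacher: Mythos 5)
Your proposal is correct and follows essentially the same route as the paper: the Palm inversion identity \eqref{aaa} applied with $N_1=N_d$, $N_2=N_g$ and $M(dt)=\1(Q_t>0)\,dt$, Proposition \ref{pikk12} at $\beta=0$ (using $\Phi_\Lambda(0)=\theta^*$), and then the limit $\alpha\to\infty$ to identify $\lambda$. The caveat you flag at the end is in fact well taken: the paper's proof asserts $\lim_{\theta\to\infty}\psi_\Lambda(\theta)/\theta=1$ without qualification, but in the spectrally positive bounded-variation case $\psi_\Lambda(\theta)/\theta\to 1-1/|d_Y|$, so the stated value $\lambda=\mu\theta^*$ is then inconsistent with Proposition \ref{pikk13}(iii) (which gives $\lambda=\mu\theta^*\bigl(1-\tfrac{1}{|d_Y|}\bigr)$, the correct common intensity of $N_g$ and $N_d$); your suggestion to treat that subcase separately, in the style of Proposition \ref{pikk13}, is exactly the right repair, and it yields $E_d[e^{-\alpha g(1)}]=1-\frac{|d_Y|}{|d_Y|-1}\,\frac{\alpha}{\Phi_\Lambda(\alpha)}$ there.
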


\begin{corollary}
The mean duration of a typical idle period is $(1-\mu)/\lambda$,
while the mean duration of a typical busy period is $\mu/\lambda$,
where $\mu$ is given by \eqref{ltrate} and $\lambda$ is given in
Propositions \ref{pikk13}, \ref{pikk14}.
\end{corollary}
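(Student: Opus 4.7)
The plan is to obtain both mean durations directly from the Laplace transform identities already established in the derivations preceding Propositions \ref{pikk13} and \ref{pikk14}, by letting $\alpha\downarrow 0$. Under $P_g$ we have $g(0)=0$, so the idle period starting at the origin has length exactly $d(0)$; and under $P_d$ we have $d(0)=0$, so the busy period starting at the origin has length exactly $g(1)$. Thus $E_g[d(0)]$ and $E_d[g(1)]$ are the quantities to compute.

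For the idle period, recall the identity derived from \eqref{aaa},
\[
\frac{1-\mu}{\lambda}\,\alpha\, E[e^{-\alpha d(0)}\mid Q_0=0]
= 1 - E_g[e^{-\alpha d(0)}].
\]
Dividing both sides by $\alpha$, the left-hand side tends to $(1-\mu)/\lambda$ as $\alpha\downarrow 0$ (since the conditional Laplace transform tends to $1$), while the right-hand side tends to $E_g[d(0)]$ by monotone convergence. This gives $E_g[d(0)]=(1-\mu)/\lambda$. The same reasoning applied to
\[
\frac{\mu}{\lambda}\,\alpha\, E[e^{-\alpha g(1)}\mid Q_0>0]
= 1 - E_d[e^{-\alpha g(1)}],
\]
yields $E_d[g(1)]=\mu/\lambda$.

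There is no obstacle: the argument is an immediate consequence of the Palm exchange (Campbell-type) identity already used to pin down $\lambda$ in Propositions \ref{pikk13} and \ref{pikk14}. In fact, the result is transparent from a direct inversion viewpoint as well: the fraction of time spent idle equals $P(Q_0=0)=1-\mu$, and by Palm inversion this equals $\lambda\,E_g[d(0)]$; similarly $P(Q_0>0)=\mu=\lambda\,E_d[g(1)]$. Either route gives the claimed formulae.
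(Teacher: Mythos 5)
Your proof is correct and is essentially the argument the paper intends: the corollary follows from the Palm identities $\frac{1-\mu}{\lambda}\,\alpha\,E[e^{-\alpha d(0)}\mid Q_0=0]=1-E_g[e^{-\alpha d(0)}]$ and $\frac{\mu}{\lambda}\,\alpha\,E[e^{-\alpha g(1)}\mid Q_0>0]=1-E_d[e^{-\alpha g(1)}]$ by letting $\alpha\downarrow 0$, which is the same computation as differentiating the transforms of Propositions \ref{pikk13} and \ref{pikk14} at $0$ (equivalently, the Palm inversion $1-\mu=\lambda E_g[d(0)]$, $\mu=\lambda E_d[g(1)]$ you mention). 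No gaps; the monotone-convergence step and the identification of the typical period lengths as $d(0)$ under $P_g$ and $g(1)$ under $P_d$ are exactly right.
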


\begin{remark}
{\rm
By the relation between $P$ and the Palm probability $P_d$ it
follows that the typical idle (respectively busy)
periods are stochastically smaller than 
the observed idle (respectively busy) periods, see \cite{BB}. 
In particular, the means of the former are shorter than
the means of the latter. (This is usually referred to as 
the ``inspection paradox''.)
This gives us several inequalities between different quantities associated with 
the L\'evy process $Y.$ To give an example, we compare the mean durations of idle periods in the spectrally negative 
case. From Proposition \ref{pikk8} we have  
\begin{align*}
  E \big[ e^{-\alpha d(0)} \mid Q_0=0\big]  
&= \frac{\Phi_Y(0)\psi_Y(1)}{1-\Phi_Y(0)}~
   \left(
\frac{\Phi_Y(\alpha)-1}{ \Phi_Y(\alpha)(\alpha - \psi_Y(1))}
\right)
\\
&=:\frac{\Phi_Y(0)\psi_Y(1)}{1-\Phi_Y(0)}~F(\alpha).
\end{align*}
It follows that 
 $$
 E \big[ d(0) \mid Q_0=0\big]  =-\frac{\Phi_Y(0)\psi_Y(1)}{1-\Phi_Y(0)}~F'(0).
$$
Since $d(0)$ and $-g(0)$ are identical in law, the mean duration of the observed idle period is 
 $$
 E \big[ d(0)-g(0) \mid Q_0=0\big]  =-2\,\frac{\Phi_Y(0)\psi_Y(1)}{1-\Phi_Y(0)}~F'(0).
$$
Notice that, using the function $F,$ we may write from Proposition \ref{pikk13}
$$
E_g[e^{-\alpha d(0)}] = 1 - {\alpha}\,F(\alpha). 
$$
and
$$
E_g[ d(0)] =F(0).
$$
Now, by the ``inspection paradox'', 
$$
E_g[ d(0)] \leq E \big[ d(0)-g(0) \mid Q_0=0\big],
$$
which, after some manipulations, is equivalent with 
$$
\left(1-\Phi_Y(0)\right)^2\leq 2\left(\Phi_Y(0)^2-\Phi_Y(0)+\Phi'_Y(0)\psi_Y(1)\right).
$$}
\end{remark}

\noindent{\bf Example 5 (continuation of Example 1):}
Consider $Y_t=\sigma B_t-\mu t$, and assume $0 < \mu < 1$.
%and take, for simplicity, $\sigma=1$.
Here the rate of beginnings of idle (or busy) periods is
\[
\lambda = \psi_Y'(0) \theta^* = \frac{2 \mu (1-\mu)}{\sigma^2}.
\]
The mean duration of a typical idle period of $Q$ is
\[
\frac{\sigma^2}{2\mu},
\]
while the mean duration of a typical busy period of $Q$ is
\[
\frac{\sigma^2}{2(1-\mu)}.
\]
To find, e.g., the distribution of a typical busy period, we use
Proposition \ref{pikk14}. We have, see \eqref{psil},
\[
\psi_\Lambda(q) = q - \Phi_Y(q),
\]
where $\Phi_Y$ is the inverse function of $\psi_Y$, i.e.
\[
\Phi_Y(q) = \frac{\sqrt{\mu^2+2\sigma^2 q^2}-\mu}{\sigma^2},
\]
and $\Phi_\Lambda$ is the inverse function of $\psi_\Lambda$, i.e.
\[
\Phi_\Lambda(\alpha) 
=
\frac{(1-\mu) + 2 \sigma^2 \alpha 
+ \sqrt{(1-\mu)^2 + 4 \sigma^2 \alpha } }{2\sigma^2},
\]
and so the Laplace transform of the typical busy period is
\[
E_d[e^{-\alpha g(1)}]
= \frac{(1-\mu)+\sqrt{(1-\mu)^2 + 4 \sigma^2 \alpha }}{(1-\mu)+2\sigma^2\alpha+\sqrt{(1-\mu)^2 + 4 \sigma^2 \alpha }}.
\]

\appendix%{Appendix A: On Skorokhod reflection, fluid queues, and stationarity}

\section{On Skorokhod reflection, fluid queues, and stationarity}
\label{skororeview}
In this section we review some facts about the Skorokhod reflection
of a process with stationary-ergodic increments. 
We carefully define the system, give conditions for its stability,
%discuss an alternative integral representation,
and recall some distributional relations based on Palm calculus,
see \cite{BB}, \cite{KL}. 
Although the setup is much more general than the one used in later 
sections for concrete calculations, it is nevertheless interesting to
isolate those properties that are not based on specific distributional
assumptions (such as Markovian property or independent increments)
but are consequences the more general stationary framework. 

Let $(\Omega, \FF, P)$ be a probability space together with a
$P$-preserving flow $(\theta_t, t \in \R)$. That is, for each $t \in \R$,
$\theta_t : \Omega\to \Omega$ is measurable with measurable
inverse, $\theta_0$ is the identity function,
$\theta_t \comp \theta_s = \theta_{s+t}$, for all $s, t \in \R$,
and $P(\theta_t A)=P(A)$ for all $t \in \R$, $A \in \FF$.
Consider a process $\WW=(\WW_t, t \in \R)$ with stationary
increments, i.e.\
$(\WW_t-\WW_s) \comp \theta_u = \WW_{t+u}-\WW_{s+u}$
for all $s, t, u \in \R$. 
We let $E$ denote expectation with respect to $P$.

Following \cite{KL}, we define the 
{\em ``Skorokhod Dynamical System''
(abbreviated SDS henceforth) driven by $\WW$} as a 2-parameter stochastic flow:
\begin{align}
\RR_{s,t}\WW(x) 
&:= [x+\WW_t-\WW_s] - \inf_{s \le u \le t} [(x+W_s-W_u) \wedge 0]
\label{sds}
\\
&:= \sup_{s \le u \le t} (\WW_t - \WW_u) \vee (x+\WW_t-\WW_s)
\quad x \ge 0, \quad s \le t.
\label{sds2}
\end{align}
Thus, for each $s < t$, we have a random element $\RR_{s,t}\WW$ taking
values in the space of $C(\R_+)$ of continuous functions from $\R_+$
into itself. The family $(\RR_{s,t}\WW,~ -\infty < s < t < \infty)$ is
a stochastic {\em flow} because the 
following composition rule (semigroup property)
holds for each $\omega \in \Omega$:
\begin{align*}
& \RR_{s,t}\WW = \RR_{u,t}\WW \comp \RR_{s,u}\WW,
\quad s \le u \le t,\\
& \RR_{t,t}\WW(x) = x, \quad t \in \R, x \ge 0,
\end{align*}
%Physically, for each $t_1 < t_2$ we may think of $\WW_{t_2}-\WW_{t_1}$ 
%as representing the net flow between the times $t_1$ and $t_2$.
%Fixing time $s$ and a buffer content $x \ge 0$, the variable
%$\RR_{s,t}\WW(x)$ represents the buffer content at some time $t > s$.
It is a {\em stationary} stochastic flow because,
for each $x \in \R+$, we have:
\[
\RR_{s,t}\WW(x)\comp \theta_u
= \RR_{s+u,t+u}\WW(x),~ -\infty < s \le t < \infty, ~ u \in \R.
\]
%but, for each fixed $s$, the process $\big\{ \RR_{s,t}\WW(x),~ t > s\big\}$
%is not (necessarily) stationary.
%\subsection{Stationary reflection}
We say that the process $\ZZ=\big\{ \ZZ_t, t \in \R \big\}$ constitutes
a stationary solution of the SDS driven by $\WW$ 
if $\ZZ$ is $\WW$-measurable and if
\begin{align*}
\ZZ_t &= \RR_{s,t}\WW(\ZZ_s), \quad s \le t,\\
\ZZ_t\comp \theta_u &=\ZZ_{t+u}, \quad t, u \in \R.
\end{align*}
Existence and uniqueness is guaranteed under some assumptions:
\begin{lemma}\label{u1}
Assume that $\sup_{-\infty < s \le 0} \WW_s < \infty$,
and $\liminf_{t\to\infty} \WW_t < \infty$, $P$-a.s.
Then there is a unique stationary solution to the Skorokhod dynamical
system driven by $\WW$. This is given by
\begin{equation}
\label{qq}
\ZZ_t = \sup_{-\infty < u \le t} (\WW_t-\WW_u) 
=: \widetilde \RR_t \WW.
\end{equation} 
\end{lemma}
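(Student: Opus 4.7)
The plan is to define $\ZZ_t:=\widetilde\RR_t\WW=\sup_{-\infty<u\le t}(\WW_t-\WW_u)$ and verify in sequence: finiteness, the semigroup identity that makes $\ZZ$ a solution of the SDS, time-stationarity, and uniqueness among stationary solutions. The first three items are direct; uniqueness follows a Loynes-style comparison augmented by an ergodic argument that pins down a residual additive constant.

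For finiteness, stationarity of the increments of $\WW$ gives $\ZZ_t\eqdist\ZZ_0=\WW_0-\inf_{u\le 0}\WW_u$, which is a.s.\ finite under the stated hypothesis on the running extremum of $\WW$ over $(-\infty,0]$. The identity $\ZZ_t=\RR_{s,t}\WW(\ZZ_s)$ for $s\le t$ comes from splitting the supremum at $s$:
\[
\sup_{u\le t}(\WW_t-\WW_u)
=\sup_{s\le u\le t}(\WW_t-\WW_u)\;\vee\;\bigl[(\WW_t-\WW_s)+\sup_{u\le s}(\WW_s-\WW_u)\bigr],
\]
the second bracket being $\ZZ_s+\WW_t-\WW_s$, as required by \eqref{sds2}. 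Stationarity $\ZZ_{t+h}=\ZZ_t\comp\theta_h$ is immediate from a change of variable in the sup combined with $(\WW_t-\WW_u)\comp\theta_h=\WW_{t+h}-\WW_{u+h}$.

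For uniqueness, let $\ZZ'$ be any other stationary $\WW$-measurable solution. The key structural fact is that $x\mapsto\RR_{s,t}\WW(x)=A\vee(x+B)$, with $A:=\sup_{s\le u\le t}(\WW_t-\WW_u)$ and $B:=\WW_t-\WW_s$, is non-decreasing and $1$-Lipschitz in $x$: $0\le\RR_{s,t}\WW(x')-\RR_{s,t}\WW(x)\le x'-x$ whenever $x\le x'$. Taking $s=-n$, $x=0$, and using $\ZZ'_{-n}\ge 0$, monotonicity yields $\ZZ'_t\ge\RR_{-n,t}\WW(0)=\sup_{-n\le u\le t}(\WW_t-\WW_u)\to\ZZ_t$ as $n\to\infty$, so $D_t:=\ZZ'_t-\ZZ_t\ge 0$. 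The Lipschitz bound applied with $x=\ZZ_s$, $x'=\ZZ'_s$ gives $D_t\le D_s$ for $s\le t$. Hence $D$ is nonnegative, non-increasing, and (as the difference of two stationary processes indexed by the same flow) stationary; consequently $D_t(\omega)$ is a.s.\ constant in $t$, equal to some $C(\omega)\ge 0$.

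It remains to force $C=0$. At any time $t$ with $\ZZ_t=0$ (equivalently $\WW_t=\inf_{u\le t}\WW_u$), the recursion at $t'>t$ reads
\[
\ZZ'_{t'}=\sup_{t\le u\le t'}(\WW_{t'}-\WW_u)\vee(C+\WW_{t'}-\WW_t),
\qquad
\ZZ_{t'}+C=\bigl[\sup_{t\le u\le t'}(\WW_{t'}-\WW_u)\vee(\WW_{t'}-\WW_t)\bigr]+C,
\]
and the identity $\ZZ'_{t'}=\ZZ_{t'}+C$ forces $C\le 0$ as soon as $\sup_{t\le u\le t'}(\WW_{t'}-\WW_u)>\WW_{t'}-\WW_t$, i.e.\ as soon as $\WW$ attains a strictly smaller value than $\WW_t$ somewhere in $(t,t']$. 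The hypothesis $\liminf_{r\to\infty}\WW_r<\infty$, combined with stationarity of increments, should ensure that $\WW$ makes arbitrarily deep new lows at arbitrarily large times, so such pairs $(t,t')$ exist a.s., yielding $C=0$. This last step is the main obstacle, as the hypothesis is comparatively weak; my plan is to apply Kingman's subadditive ergodic theorem to $(\WW_t)$ together with an ergodic decomposition, concluding that on each ergodic component either $\WW_t\to-\infty$ or $\WW_t$ oscillates with $\liminf=-\infty$, both producing the required new lows.
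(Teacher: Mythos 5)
Your skeleton is the standard Loynes comparison and its first four steps are sound: splitting the supremum at $s$ gives the flow identity, stationarity is a change of variables, and monotonicity plus $1$-Lipschitz continuity of $x\mapsto\RR_{s,t}\WW(x)$ give $\ZZ'\ge\ZZ$ and a nonnegative, non-increasing, stationary difference, hence $D_t\equiv C(\omega)\ge 0$. (For the record, the paper offers no proof of this lemma; it is imported from \cite{KL}, and your route is the expected one.) The genuine gap is exactly the step you flag, forcing $C=0$, and the patch you propose cannot work. Kingman/Birkhoff need an integrability assumption on $\WW_1$ that is nowhere made, and, more fundamentally, the dichotomy you want ("on each ergodic component either $\WW_t\to-\infty$ or $\liminf_{t\to\infty}\WW_t=-\infty$") is false for stationary-increment processes: a bounded coboundary such as $\WW_t=\sin(U+t)-\sin U$, with $U$ uniform on $[0,2\pi)$ and $\theta_h$ rotation, has stationary increments, satisfies both printed hypotheses ($\sup_{s\le 0}\WW_s<\infty$ and $\liminf_{t\to\infty}\WW_t<\infty$), and stays bounded. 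For this example uniqueness genuinely fails: $\ZZ_t=\sin(U+t)+1$ and $\ZZ_t+C$, $C>0$, are both stationary solutions, since $\sup_{s\le u\le t}(\WW_t-\WW_u)\le \ZZ_t<\ZZ_t+C$, so the reflection never acts on the shifted process. Hence no argument can close your last step under the hypotheses as literally printed: the statement is then false, and the defect lies in the printed hypotheses, not in your strategy.

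Those hypotheses are evidently misprints: under the assumptions of Corollary \ref{u2}, $E\WW_1<0$ forces $\WW_s\to+\infty$ as $s\to-\infty$, so $\sup_{s\le 0}\WW_s=+\infty$ a.s.\ in precisely the case the corollary is meant to cover. The intended conditions (as in \cite{KL}) are $\sup_{-\infty<s\le 0}(\WW_0-\WW_s)<\infty$, i.e.\ $\inf_{s\le 0}\WW_s>-\infty$ --- note this is also what your finiteness step silently uses, since $\ZZ_0=\WW_0-\inf_{u\le 0}\WW_u$ is not controlled by $\sup_{s\le 0}\WW_s$ --- together with $\liminf_{t\to\infty}\WW_t=-\infty$. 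Under the latter your own mechanism closes with no ergodic theory at all: the running minimum $m_t=\inf_{u\le t}\WW_u$ decreases to $-\infty$; at the first passage strictly below a previous minimum, right-continuity yields a time $r$ with $\WW_r=m_r$, i.e.\ $\ZZ_r=0$; and since $\WW$ later takes a value strictly below $\WW_r$, your comparison at $t=r$ gives $C\le 0$, hence $C=0$. In short: correct skeleton, correctly identified weak point, but the ergodic-theoretic repair fails; the resolution is to read the hypotheses as in \cite{KL}, after which your final step goes through directly.
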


Quite often, in addition to stationarity of the flow, we also
also assume ergodicity, namely that each
$A \in \FF$ that is invariant under $\theta_t$ for all $t$, 
has $P(A)$ equal to $0$ or $1$. 
Owing to Birkhhoff's individual ergodic theorem Lemma \ref{u1}
immediately yields:
\begin{corollary}\label{u2}
Under the ergodicity assumption, and if $E \WW_1< 0$, then
there is a unique stationary solution $\ZZ$ to the Skorokhod dynamical
system driven by $\WW$. The process $\ZZ$ is given by \eqref{qq}
and $\ZZ$ is an ergodic process.
\end{corollary}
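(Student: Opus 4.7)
\proof[Proof Plan]
The plan is to verify the hypotheses of Lemma \ref{u1} using Birkhoff's individual ergodic theorem, and then to transfer ergodicity from the flow $(\theta_t)$ to the process $\ZZ$.

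First, I use the ergodicity assumption on $(\theta_t, t \in \R)$ together with the equivariance $(\WW_{k+1} - \WW_k) = (\WW_1 - \WW_0) \comp \theta_k$ to apply Birkhoff's theorem to the integrable random variable $\xi := \WW_1 - \WW_0$, in both time directions. This yields
$$
\frac{\WW_n - \WW_0}{n} \xrightarrow[n \to +\infty]{\text{a.s.}} E\xi = E\WW_1 < 0,
\qquad
\frac{\WW_0 - \WW_{-n}}{n} \xrightarrow[n \to +\infty]{\text{a.s.}} E\xi < 0,
$$
so $\WW_n \to -\infty$ and $\WW_{-n} \to +\infty$, $P$-a.s. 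A routine continuous-time interpolation, using that the distribution of $\sup_{0 \le u \le 1}|\WW_{k+u} - \WW_k|$ is independent of $k \in \Z$ by stationarity of increments, extends this to $\WW_t \to -\infty$ as $t \to +\infty$ and $\WW_t \to +\infty$ as $t \to -\infty$, $P$-a.s.

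Second, these two-sided divergences give precisely what Lemma \ref{u1} requires: the path $s \mapsto \WW_s$ remains bounded below on $(-\infty, 0]$ (it in fact tends to $+\infty$ at $-\infty$), so that the supremum $\widetilde\RR_t\WW = \sup_{-\infty < u \le t}(\WW_t-\WW_u)$ defining the candidate solution is $P$-a.s.\ finite, and $\liminf_{t \to +\infty}\WW_t = -\infty < \infty$. Lemma \ref{u1} therefore supplies the existence and uniqueness of the stationary solution $\ZZ$ and the explicit formula (\ref{qq}).

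Finally, for the ergodicity of $\ZZ$: this process is $\FF$-measurable and satisfies the covariance $\ZZ_t \comp \theta_u = \ZZ_{t+u}$ for all $t, u \in \R$. Hence any event in $\sigma(\ZZ_t, t \in \R)$ that is invariant under time-shifts of the trajectories of $\ZZ$ pulls back to a $\theta$-invariant event in $\FF$, which by ergodicity of the flow has $P$-probability $0$ or $1$. Thus $\ZZ$ is an ergodic process. The only non-trivial step is the first paragraph; the rest follows directly from Lemma \ref{u1} and the standard pull-back of the invariant $\sigma$-algebra.
\qed
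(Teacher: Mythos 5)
Your argument is correct and is essentially the paper's own (the paper offers no more detail than ``owing to Birkhoff's individual ergodic theorem Lemma \ref{u1} immediately yields'' the corollary): Birkhoff applied to the stationary increments gives the a.s.\ two-sided linear drift, which makes the supremum in \eqref{qq} finite and verifies the hypotheses of Lemma \ref{u1}, and ergodicity of $\ZZ$ follows because $\ZZ$ is a measurable, $\theta$-equivariant functional of the ergodic flow. The only refinement worth recording is that ergodicity of the flow $(\theta_t,\,t\in\R)$ does not by itself make the time-one map $\theta_1$ ergodic, so after applying Birkhoff to the skeleton averages $(\WW_n-\WW_0)/n$ one should note that the a.s.\ limit is invariant under every $\theta_s$ (since $\bigl((\WW_{n+s}-\WW_s)-(\WW_n-\WW_0)\bigr)/n\to 0$ a.s.) and hence, by ergodicity of the flow, equals the constant $E\WW_1<0$.
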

%\subsection{Reflecting Brownian motion with drift}
%As an example, consider a 2-sided standard
%Brownian motion $B=(B_t, t\in \R)$, and a $\mu > 0$, and let $\WW_t=B_t-\mu t$.
%Let $X^x = (X^x_t, t \ge 0)$ be the solution to the SDS driven by $\WW$
%and starting from $x$ at time $0$:
%\[
%X^x_t := \RR_{0,t}\WW(x), \quad t \ge 0.
%\]
%This is a reflecting Brownian motion with drift. 
%To check that the conditions of
%Corollary \ref{u2} hold, simply let $\Omega$ be the canonical
%space of realisations of $B$ (the space of continuous functions
%$\omega : \R \to \R$ with $\omega(0)=0$), 
%let $P$ be the Wiener measure on the Borel sets of $\Omega$,
%let $\theta_t\omega(\cdot) := \omega(t + \cdot)$, 
%let $B_t(\omega)\equiv\omega(t)$, and
%recall that stationarity and ergodicity hold, 
%and that $E\WW_1 = -\mu < 0$.
%We then have that the stationary solution $X=(X_t, t \in \R)$ to
%the SDS driven by $\WW$ is given by
%\[
%X_t = \sup_{-\infty < u \le t} (B_t-B_u-\mu t+\mu u).
%\]
%Note that $X$ is a Markov process. Its marginal is well-known:
%\[
%P(X_t > x) = e^{-2\mu x}.
%\]
%\subsection{Bounded variation paths}
For the purposes of this paper, assume that $\WW$ 
is of the form
\begin{equation}
\label{Wdecomp}
\WW_t-\WW_s=A(s,t]-\beta (t-s), \quad s \le t ,
\end{equation}
where $A$ is a  locally finite stationary random measure, and
\begin{equation}
\label{Wdecompa}
0 < \beta < \alpha := E A(0,1).
\end{equation}
Let $P_A$ be the Palm probability, see \cite{BB}, \cite{KAL}, 
with respect to $A$:
%%In other words, for each $C \in \FF$,
%%$P_A(C)$ is the Radon-Nikod\'ym derivative of
%%the Campbell measure $E[\1_C A(dt)]$ with respect to $E[A(dt)]=\alpha dt$.
%%Due to stationarity, we also have \cite{BB}
\[
P_A(C) = \frac{1}{\alpha} E\bigg[ \int_{(0,1]} \1_C \comp \theta_t A(dt)
\bigg].
\]
%%We may formally take the latter as the definition of the probability
%%measure $P_A$ on $(\Omega, \FF)$.
%%We next derive some distributional relations.
%%We specialise to the case where $A$ is stationary-ergodic with rate $\alpha$,
%%and $B$ is a multiple of the Lebesgue measure: 
%%\[
%%B(s,t] = \beta(t-s), \quad
%%0 < \beta < \alpha < \infty.
%%\]
%The following 
%``distributional Little's 
%law for fluid queues''\footnote{The terminology is 
%suggestive and borrows freely from
%analogies from ordinary Queueing Theory.}
%was proved in \cite{KL}: 
%Note that $P_A(\ZZ_0>0)=1$.
The following is a consequence of Theorem 3 of \cite{KL}:
\begin{lemma}[distributional Little's law]
Let $\ZZ$ be the unique stationary solution to the SDS driven by
$\WW$ of the form \eqref{Wdecomp}. Assume that \eqref{Wdecompa} holds.
Then, for any function $\psi:[0,\infty) \to \R$, which
is continuous on $(0,\infty)$, we have (Theorem 3 of \cite{KL})
\[
E\psi(\ZZ_0) = \left(1-\frac{\alpha}{\beta} \right)\psi(0)
+ \frac{\alpha}{\beta} E_A \psi(\ZZ_0).
\]
In particular, %%for the choice $\psi(x) = \1(x > 0)$, we have
\begin{equation}
\label{dll}
P(\ZZ_0 > x) = \frac{\alpha}{\beta} P_A(\ZZ_0 > x), \quad x > 0,
\qquad
P(\ZZ_0 > 0) = \frac{\alpha}{\beta}.
\end{equation}
\end{lemma}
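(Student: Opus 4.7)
The claim reduces, by splitting $E\psi(\ZZ_0)=\psi(0)P(\ZZ_0=0)+E[\psi(\ZZ_0)\1(\ZZ_0>0)]$ and using $\psi(0)\1(\ZZ_0{=}0)=\psi(\ZZ_0)\1(\ZZ_0{=}0)$, to the two assertions
$$P(\ZZ_0=0)=1-\frac{\alpha}{\beta},\qquad \beta\,E[\psi(\ZZ_0)\1(\ZZ_0>0)]=\alpha\,E_A[\psi(\ZZ_0)].$$
My plan is to derive both from the Skorokhod decomposition
$$\ZZ_t-\ZZ_s=A(s,t]-\beta(t-s)+I(s,t],\qquad I\text{ nondecreasing, supported on }\{\ZZ=0\},$$
combined with the two exchange formulae
$$E\Big[\int_0^1 g(\theta_t\omega)\,A(dt)\Big]=\alpha\,E_A[g],\qquad E\Big[\int_0^1 g(\theta_t\omega)\,dt\Big]=E[g]$$
which define the Palm probability $P_A$.

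For the utilization identity I would take expectations over $[0,1]$ in the Skorokhod decomposition; stationarity kills the left-hand side, so $E[I(0,1]]=\beta-\alpha$. Since $I$ is carried by $\{\ZZ=0\}$ and the reflection prescription forces $dI_t=\beta\1(\ZZ_t=0)\,dt-\1(\ZZ_t=0)\,A(dt)$, applying the two exchange formulae above yields $\beta P(\ZZ_0=0)-\alpha P_A(\ZZ_0=0)=\beta-\alpha$; and since every $A$-arrival lifts $\ZZ$ strictly off $0$ one has $P_A(\ZZ_0=0)=0$, which gives $P(\ZZ_0=0)=1-\alpha/\beta$.

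For the functional identity I would note that, by stationarity of $\ZZ$ together with the two exchange formulae, the target identity is equivalent to the rate-conservation statement
$$E\Big[\int_0^1\psi(\ZZ_t)\big(\beta\1(\ZZ_t>0)\,dt-A(dt)\big)\Big]=0.$$
To obtain this I would apply a bounded-variation change of variables to $F(\ZZ_t)$, where $F$ is a primitive of $\psi$ on $(0,\infty)$ extended continuously to $0$. Using $d\ZZ_t=A(dt)-\beta\,dt+dI_t$, splitting into continuous and jump parts, and taking expectations, stationarity of $F(\ZZ_t)$ forces the total drift to vanish; the $dI_t$ contribution collapses since $\ZZ_t=0$ on the support of $I$, and the remaining identity is exactly the one displayed above.

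The main technical obstacle is the treatment of $A$-atoms (which occur in the settings of Figures 2 and 3 of the paper), where one must carefully handle $\psi(\ZZ_{t-})$ versus $\psi(\ZZ_t)$ in the change-of-variables expansion and align the jump contribution with the Palm value $\ZZ_0$ seen under $P_A$. It is precisely here that the hypothesis ``$\psi$ continuous on $(0,\infty)$'' is used: under $P_A$ the value $\ZZ_0$ is the post-arrival value which is strictly positive, so any discontinuity of $\psi$ at $0$ is harmless. Once the identity is established for bounded $\psi$ continuous and compactly supported in $(0,\infty)$, standard monotone approximation of $\1_{(x,\infty)}$ for $x>0$ delivers the tail form $P(\ZZ_0>x)=(\alpha/\beta)P_A(\ZZ_0>x)$.
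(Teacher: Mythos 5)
You are not competing with a written-out proof here: the paper does not prove this lemma at all, it simply quotes it as a consequence of Theorem 3 of \cite{KL}. So a self-contained argument is welcome, but yours has a genuine gap. Your reduction to the two separate claims $P(\ZZ_0=0)=1-\alpha/\beta$ and $\beta\,E[\psi(\ZZ_0)\1(\ZZ_0>0)]=\alpha\,E_A[\psi(\ZZ_0)]$ proves something strictly stronger than the lemma, and those two claims are false at the stated level of generality, where $A$ is an arbitrary stationary random measure (reading \eqref{Wdecompa} as $\alpha<\beta$, as stability requires). Take $A(dt)=\alpha\,dt$: then $\ZZ\equiv 0$ and $P_A=P$, so $P(\ZZ_0=0)=1\neq 1-\alpha/\beta$ and $P_A(\ZZ_0=0)=1$, while the displayed functional identity of the lemma still holds. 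The culprit is your assertion that ``every $A$-arrival lifts $\ZZ$ strictly off $0$'', i.e.\ $P_A(\ZZ_0=0)=0$: this is precisely the nontrivial point, it does not follow from \eqref{Wdecomp}--\eqref{Wdecompa}, and it fails whenever $A$ has an absolutely continuous part flowing at rate $\le\beta$ while $\ZZ=0$. In the paper's application it does hold, but only because of extra structure of $L$: either $L$ is singular with respect to Lebesgue measure, or, in the spectrally positive bounded-variation case, \eqref{abs-cts} together with assumption [A2] gives density $|d_Y|>1=\beta$. Your argument never invokes any of this; the same unproved positivity is also what the final equality $P(\ZZ_0>0)=\alpha/\beta$ secretly requires, since the approximation argument only yields $P(\ZZ_0>0)=(\alpha/\beta)P_A(\ZZ_0>0)$.

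There is a second, related slip in the rate-conservation step: the $dI_t$ contribution does not ``collapse''. On the support of $I$ one has $\psi(\ZZ_t)=\psi(0)$, so it contributes $\psi(0)\,E[I(0,1]]=\psi(0)(\beta-\alpha)$, which vanishes only when $\psi(0)=0$; dropping it is what forces you to compensate with the (generally false) utilization identity. Ironically, keeping it repairs the proof and removes the need for the splitting altogether: stationarity of $F(\ZZ_t)$ gives $0=E\int_0^1\psi(\ZZ_t)\,A(dt)-\beta\,E\int_0^1\psi(\ZZ_t)\,dt+\psi(0)(\beta-\alpha)$, and the two exchange formulae turn this into exactly $E\psi(\ZZ_0)=(1-\alpha/\beta)\psi(0)+(\alpha/\beta)E_A\psi(\ZZ_0)$, with no appeal to $P_A(\ZZ_0=0)=0$ and no separate computation of $P(\ZZ_0=0)$. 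What then remains to be done carefully is (i) the change-of-variables formula itself for the BV process $\ZZ$, including the jump terms and the fact that the continuous part of $d\ZZ$ may charge $\{\ZZ=0\}$ (through both $I$ and $A$), where the possible discontinuity of $\psi$ at $0$ must be handled honestly rather than via the unjustified positivity under $P_A$; (ii) integrability of $\psi(\ZZ_0)$ under $P$ and $P_A$ (bounded $\psi$ suffices for \eqref{dll}); and (iii) a separate argument, using the specific structure of $L$ noted above, for $P_A(\ZZ_0>0)=1$, without which the last equality in \eqref{dll} is not available.
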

%and this may be called ``Little's law for the server''.
%Integrating \eqref{dll} over $x$, we obtain
%\[
%E[\ZZ_0] = \frac{\alpha}{\beta} E_A[\ZZ_0].
%\]
%and this may be called ``Little's law for fluid queues'' as it expresses, 
%mathematically, the natural identity that the mean amount of fluid in
%the system ($=E[\ZZ_0]$) is equal to the arrival rate ($=\alpha$)
%times the mean amount of time spent by a typical ``customer''
%in the system ($=E_A[\ZZ_0]/\beta$).
%
%The case of interest in this paper is $A=L=$ the local time
%at a specific point of a Markov process $X$.
%Let $X=\{X_t, t \in \R\}$ be a stationary Markov process. 
%Assume that it has, at a specific point
%of its state space, a local time $L$. Since $X$ is stationary, $L$
%is a process with stationary increments. Therefore $\WW_t:= L_t-t$
%has stationary increments and we can view the SDS driven by $\WW$ as
%a special case of the discussion above with $A$ being the random measure
%driven by $L$ and $B$ being the Lebesgue measure.
%Mannersalo {\em et al.} studied the case where $X_t$ is the process
%obtained by reflecting a Brownian motion with drift 
%$\mu$ and $L$ is its local time at the point $0$. 
%We shall extend the results to the case where $X$ is obtained by 
%reflecting a spectrally negative L\'evy process.
It should be noted that the decomposition \eqref{Wdecomp} 
of $W$ is not unique; nevertheless,
\eqref{dll} holds, regardless of which decomposition of $W$ we choose.

\section{Exit times for spectrally negative L\'evy processes}
\label{standard}

In this section we consider a spectrally negative L\'evy 
process and some facts regarding the first time the
process exits an unbounded interval.
Let $Y=(Y_t, t \in \R)$ be a spectrally negative L\'evy process
and L\'evy measure $\Pi$. 
In other words, let $B$ be a standard Brownian motion,
$\eta$ an independent Poisson random measure on $\R\times \R_-$ such that
\begin{equation} \label{prm}
E \eta (dt, dy) = dt \Pi(dy),
\quad
\Pi\{0\} = 0,
\quad
\int_{\R_-} (y^2 \wedge 1) \Pi(dy) < \infty,
\end{equation}
let $a \in \R$, $\sigma \ge 0$,
and define, for $-\infty< s \le t < \infty$,
\begin{multline}
\label{levyito}
Y(s,t] =
a (t-s) + \sigma (B_t-B_s)
\\
+\int_{(s,t]} \int_{(-\infty, -1]} y~\eta(du, dy)  +
\int_{(s,t]} \int_{(-1,0)} y~[\eta(du, dy) - du \Pi(dy)].
\end{multline}
Notice that we have thus defined only the increments of $Y$; the exact
value of $Y_0$ is unimportant;
we may, arbitrarily, set 
\[
Y_0=0.
\]
(The reason that increments are more fundamental than the process itself
is amply explained in Tsirelson \cite{TSIR}.)
If we set
\[
Y_t :=
\begin{cases}
Y(0,t], & t \ge 0 \\
-Y(t,0], & t <0
\end{cases}, \quad t \in \R,
\]
we have
\[
Y(s,t] = Y_t-Y_s,
\quad -\infty< s \le t < \infty.
\]
(In case that $\int_{-1}^0 |y| \Pi(dy) < \infty$, and $\sigma=0$,
the process $Y$ has bounded variation paths and 
can also represented as
\begin{equation}
\label{levyitobv}
Y(s,t] =
d_Y (t-s) +\int_{(s,t]} \int_{(-\infty, 0]} y~\eta(du, dy) ,
\end{equation}
for some constant $d_Y$ known as the drift of $Y$.)

To be more precise, especially for the construction of the stationary versions
of processes in this paper, we introduce shifts.
Assume that $(B, \eta)$ is defined on a probability space $(\Omega, \FF, P)$
taken, without loss of generality, to be the canonical space 
$\Omega = C(\R) \times \NN(\R^2)$, where $C(\R)$ are the
continuous functions on $\R$, and $\NN(\R^2)$ are the integer-valued
measures $\R^2$. 
Let $P$ be the product measure on the
Borel sets\footnote{The space $\NN(\R^2)$ is endowed with the topology
of weak convergence; see, e.g., \cite{KAL1}.} of 
$C(\R) \times \NN(\R^2)$ that makes $B$ a standard Brownian motion
and $\eta$ a Poisson random measure with mean measure as in \eqref{prm},
and to each $\omega=(\phi, \mu) \in C(\R) \times \NN(\R^2)$,
let $B(\phi, \mu) = \phi$, $\eta(\phi, \mu) = \mu$.
Consider also the natural shift 
$(\theta_t, t \in \R)$ on $\Omega$ defined by
\[
\theta_t (\phi, \mu)(s, A) =
(\phi(t+s), \mu(A+s)), \quad
s \in \R, \quad A \in \BB(\R^2),
\]
where $A+s := \{(t+s, y) \in \R^2: (t,y)\in A\}$.
By construction, $Y$ has \cadlag paths, and, under $P$, it has 
stationary (and independent) increments.
Henceforth we shall denote by $P_x$  the conditional probability of 
$P$ given $Y_0=x$ and $E_x$ expectation with respect to it.
All of the
following facts are standard results which can be found, for example,
in \cite{BER,K} 
See also \cite{KP} for a
review which is more convenient for the setting at hand.

Let $\Psi_Y: \R \to \C$ denote the characteristic exponent of $Y$:
\[
E \big[ e^{i \theta Y_1} \big] = e^{-\Psi_Y(\theta)},
\quad \theta \in \R,
\]
and let $\psi_Y: [0,\infty) \mapsto \R$ denote the Laplace
exponent of $Y$:
\[
\psi_Y(\beta) = \log E \big[ e^{\beta Y_1} \big],
\quad \beta\geq 0.
\]
It is well known
that $\psi_Y$ is infinitely differentiable, strictly convex, $\psi(0)=0$, 
$\lim_{\beta \to \infty} \psi_Y(\beta) = \infty$,
and
\[
\psi'_Y(0+) = E Y_1 = E(Y_{t+1}-Y_t) \in \R \cup \{-\infty\}.
\]  
For each $q\geq 0$ let
\begin{equation}
\label{phi}
\Phi_Y(q) = \sup\{\beta \geq 0 : \psi_Y(\beta) = q\}.
\end{equation}
Since $Y$ drifts to infinity, oscillates, drifts to minus infinity
accordingly as $\psi'_Y(0+)>0$, $\psi'_Y(0+)=0$ and $\psi'_Y(0+)<0$, 
it follows that $\Phi_Y(0)>0$ if and only if $\psi'_Y(0)<0$ and otherwise
$\Phi_Y(0)=0$. 
It is also easy to see that $\Phi_Y(q) >0$ for all $q>0$. 

%The quantity $\Phi_Y(q)$ also turns out to characterize the first passage times of the process $Y$. Specifically,  if we define 
%$$
%\tau^+_x = \inf\{t>0 : Y_t >x\}
%$$ 
%then for all $q\geq 0$, $x\geq 0$ it is known that 
%\begin{equation}
%E_0 \big[ e^{-q \tau^+_x} \big] = e^{- \Phi_Y(q)x}.
%\label{tau+} 
%\end{equation}

%%%%%%%%%%%%%%
Define also the scale functions $W^{(q)}(x)$, $Z^{(q)}(x)$
via their Laplace transforms 
\begin{align}
\int_0^\infty e^{-\beta x}W^{(q)}(x)dx & = \frac{1}{\psi_Y(\beta) - q}, 
%\quad \beta > \Phi_Y(q), 
\label{LT} \\
\int_0^\infty e^{-\beta x} Z^{(q)}(x)dx
& = \frac{\psi_Y(\beta)}{\beta(\psi_Y(\beta) - q)},
\label{Ztransform}
\end{align}
defined for all $\beta > \Phi_Y(q)$.
%via the relations
%\begin{align}
%\label{LT}
%\int_0^\infty e^{-\beta x}W^{(q)}(x)dx = \frac{1}{\psi_Y(\beta) - q}, &
%\quad
%\beta > \Phi_Y(q), \\
%Z^{(q)}(x) = 1 + q\int_0^x W^{(q)}(y)dy.&
%\nonumber
%\end{align}
%A straightforward calculation shows that for each $q\geq 0$,
%\begin{equation}
%\label{Ztransform}
%\int_0^\infty e^{-\beta x} Z^{(q)}(x)dx
%= \frac{\psi_Y(\beta)}{\beta(\psi_Y(\beta) - q)},
%\quad
%\beta > \Phi_Y(q).
%\end{equation}
The functions $\Phi_Y, W^{(q)}, Z^{(q)}$ 
appear in the expressions for
the Laplace transform of the first passage times 
\begin{align}
\tau^+_x &:= \inf\{t>0 : Y_t >x\},
\label{t+}\\
\tau^-_{-x} &= \inf\{t >0 : Y_t <-x\},
\label{t-}
\end{align}
as follows (cf. Chapter Theorem 8.1, p214 of Kyprianou \cite{K}):
\begin{lemma}
\label{one-sided-exit}
For all $q\geq 0$, $x \geq 0$,
\begin{align}
E \big[ e^{-q \tau^+_x} \big] &= e^{- \Phi_Y(q)x},
\label{tau+} \\
E \big[ e^{-q \tau^-_{-x}} \big] &= Z^{(q)}(x) - \frac{q}{\Phi_Y(q)}W^{(q)}(x).
\label{tau-}
\end{align}
\end{lemma}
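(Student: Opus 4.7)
\proof[Proof plan for Lemma \ref{one-sided-exit}]
The plan is to treat the two identities separately, leveraging the two defining features of a spectrally negative L\'evy process, namely creeping upwards and the existence of a one-parameter family of exponential martingales indexed by $\Phi_Y$.

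For the upper exit formula \eqref{tau+}, the first step is to observe that for each $q\ge 0$ the process
\[
M_t := \exp\bigl(\Phi_Y(q)\, Y_t - q t\bigr), \qquad t\ge 0,
\]
is a $P_0$-martingale with respect to the natural filtration of $Y$; this is immediate from the stationary independent increments of $Y$ together with the identity $\psi_Y(\Phi_Y(q))=q$. I would then apply optional stopping at $\tau^+_x \wedge t$ and let $t\to\infty$. Because $Y$ is spectrally negative, it has no positive jumps, so on the event $\{\tau^+_x<\infty\}$ we have the creeping identity $Y_{\tau^+_x} = x$, which turns $M_{\tau^+_x}$ into the deterministic quantity $e^{\Phi_Y(q)x - q\tau^+_x}$. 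Uniform integrability of the stopped martingale (which holds since $0\le M_{\tau^+_x \wedge t}\le e^{\Phi_Y(q)x}$) yields $E_0[e^{-q\tau^+_x}\1(\tau^+_x<\infty)] = e^{-\Phi_Y(q)x}$, and for $q>0$ the indicator can be dropped, while for $q=0$ the value $\Phi_Y(0)\ge 0$ was discussed just above the lemma.

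For the lower exit formula \eqref{tau-}, my strategy is to go through the two-sided exit problem. Fix $a>0$ and consider $\tau^+_a \wedge \tau^-_{-x}$; the scale function $W^{(q)}$ is characterised (up to the Laplace transform normalisation \eqref{LT}) by the fact that
\[
E\bigl[e^{-q\tau^+_a}\,;\, \tau^+_a<\tau^-_{-x}\bigr] = \frac{W^{(q)}(x)}{W^{(q)}(x+a)},
\]
and then a complementary identity involving $Z^{(q)}$ reads
\[
E\bigl[e^{-q\tau^-_{-x}}\,;\, \tau^-_{-x}<\tau^+_a\bigr] = Z^{(q)}(x) - W^{(q)}(x)\, \frac{Z^{(q)}(x+a)}{W^{(q)}(x+a)}.
\]
Both of these are standard facts for spectrally negative L\'evy processes, established by combining the exponential martingale of the previous paragraph with the defining Laplace transforms \eqref{LT}, \eqref{Ztransform} (e.g.\ via an argument that identifies the bounded harmonic functions of the killed process on $(-x,a)$). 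From here I would let $a\to\infty$ in the second display. The ratio $Z^{(q)}(y)/W^{(q)}(y)$ converges to $q/\Phi_Y(q)$ as $y\to\infty$, which can be read off from the Laplace transforms \eqref{LT}, \eqref{Ztransform} by a Tauberian argument: both $W^{(q)}$ and $Z^{(q)}$ grow like $e^{\Phi_Y(q)y}$ at infinity, with respective multiplicative constants $1/\psi'_Y(\Phi_Y(q))$ and $q/(\Phi_Y(q)\psi'_Y(\Phi_Y(q)))$. Taking the limit yields the stated formula.

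The main obstacle, if one wishes to be self-contained rather than to quote Chapter 8 of \cite{K}, is the two-sided exit identities, and in particular the identification of $W^{(q)}$ through its role as the unique (up to multiplicative constant) $q$-invariant function for the process killed on exiting $(-x,a)$. Everything else is essentially martingale bookkeeping together with the creeping property $Y_{\tau^+_x}=x$. Since this lemma is recalled only to be applied in \eqref{pikk6}, \eqref{pikk1}, \eqref{eeee}, \eqref{H}, I would choose to present it in the form above and refer to \cite{K,KP,BER} for the verification of the two-sided exit identities.
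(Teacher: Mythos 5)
Your proposal is correct, and for the first identity it coincides with the paper's argument: the paper's sketch also rests on the exponential martingale $e^{\Phi_Y(q)Y_t-qt}$ (valid since $\psi_Y(\Phi_Y(q))=q$) together with the fact that, in the absence of positive jumps, the level $x$ is crossed continuously, so $Y_{\tau^+_x}=x$ on $\{\tau^+_x<\infty\}$. For the second identity, however, you take a genuinely different route. The paper (following Kyprianou, Thm.\ 8.1) writes $E_x[e^{-q\tau^-_0}]=P_0(\underline Y_{\ee_q}<-x)$ for an independent exponential time $\ee_q$, invokes the Wiener--Hopf factorisation to identify the characteristic function of $\underline Y_{\ee_q}$ as $\frac{q}{\Phi_Y(q)}\frac{\Phi_Y(q)-i\theta}{q+\Psi_Y(\theta)}$, and then reads off the distribution function in terms of $W^{(q)}$ and $Z^{(q)}$ by matching Laplace transforms \eqref{LT}--\eqref{Ztransform}. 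You instead start from the two-sided exit identities on $(-x,a)$ and let $a\to\infty$, using $Z^{(q)}(y)/W^{(q)}(y)\to q/\Phi_Y(q)$. Both routes lean on standard fluctuation theory, so either is a legitimate way to recall the result here; yours trades the Wiener--Hopf input for the two-sided exit formulas plus the exponential asymptotics of the scale functions, which is the heavier part of your argument and deserves more than the word ``Tauberian'' (the clean justification is the Esscher change of measure to the process with Laplace exponent $\psi_Y(\cdot+\Phi_Y(q))-q$, under which $e^{-\Phi_Y(q)y}W^{(q)}(y)$ becomes a $0$-scale function converging to $1/\psi_Y'(\Phi_Y(q))$, with the corresponding statement for $Z^{(q)}$ following from $Z^{(q)}(y)=1+q\int_0^yW^{(q)}$). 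Also note the boundary case $q=0$: there the ratio $q/\Phi_Y(q)$ and your asymptotics require a limiting interpretation (and $\psi_Y'(\Phi_Y(0))>0$, i.e.\ $\Phi_Y(0)>0$), though this is harmless for the uses made of \eqref{tau-} in \eqref{pikk6} and \eqref{H}, where the relevant case has $\Phi_Y(0)>0$ or $q>0$; your monotone-convergence passage $\{\tau^-_{-x}<\tau^+_a\}\uparrow\{\tau^-_{-x}<\infty\}$ is fine since $\tau^+_a\to\infty$ a.s.
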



\begin{thebibliography}{99}
\bibitem{BB}
Baccelli, F., and Br\'emaud, P. (2003)
{\em Elements of Queueing Theory: 
Palm Martingale Calculus and Stochastic Recurrences,}
2nd edition, Springer.
\bibitem{BER}
Bertoin, J. (1996)
{\em L\'evy processes}.
Cambridge University Press.
\bibitem{BIN}
Bingham, N.H. ((1975)
Fluctuation theory in continuous time.
{\em Adv.\ Appl.\ Prob.} {\bf 7}, 705-766.
\bibitem{BS}
Borodin, A.N., Salminen, P. (1996)
{\em Handbook of Brownian Motion - Facts and Formulae.}
Birkh\"auser.
\bibitem{Fri}
Fristedt, B.E. (1974) Sample functions of stochastic processes with stationary independent increments. {\em Adv. \ Probab.} {\bf 3}, 241--396. Dekker, New York
\bibitem{KAL}
Kallenberg, O. (1983)
{\em Random Measures.}
Academic Press. 
\bibitem{KAL1}
Kallenberg, O. (2002)
{\em Foundations of Modern Probability}.
Springer-Verlag.
\bibitem{KL}
Konstantopoulos, T. and Last, G. (2000)
On the dynamics and performance of stochastic fluid systems.
{\em J.\ Appl.\ Prob.} {\bf 37}, 652-667.
\bibitem{KZV}
Konstantopoulos, T., Zazanis, M.\  and de Veciana, G. (1997)
Conservation laws and reflection mappings with an application
to multiclass mean value analysis for stochastic fluid queues.
{\em Stoch.\ Proc.\ Appl.}  {\bf 65}, 139-146.
\bibitem{KoSa}
Kozlova, M.\ and Salminen, P. (2004)
Diffusion local time storage.
{\em Stoch.\ Proc.\ Appl.}  {\bf 114}, 211-229.
\bibitem{K}
Kyprianou, A. (2006)
{\em Introductory Lectures on Fluctuations of L\'evy Processes
with Applications.}
Springer.
\bibitem{KP}
Kyprianou, A.\ and Palmowski, Z. (2004)
A martingale review of some fluctuation theory for spectrally
negative L\'evy processes.
{\em Sem.\ Prob.} {\bf  28}, 16-29, Lecture Notes in Math., Springer.
\bibitem{MNS}
Mannersalo, P., Norros, I.\ and Salminen, P. (2004)
A storage process with local time input.
 {\em Queueing Syst. } {\bf 46}, 557-577.
\bibitem{PIT}
Pitman, J. (1986)
Stationary excursions.
{\em Sem.\ Prob.} {\bf  XXI}, 289-302, Lecture Notes in Math. 1247, Springer.
%editor= {{J. Az\'ema} and {P. A. Meyer} and {M. Yor}},
\bibitem{Sa}
Salminen, P. (1993)
On the distribution of diffusion local time.
 {\em Stat. Prob. Letters} {\bf 18}, 219-225. 
\bibitem{Si}
Sirvi\"o, M. (2006)
On an inverse subordinator storage.
{\em Helsinki University of Technology, Inst. Math. Report series} {\bf A501}.
\bibitem{TSIR}
Tsirelson, B. (2004)
Non-classical stochastic flows and continuous products.
{\em Probability Surveys} {\bf 1}, 173-298.
\bibitem{zolotarev64}
Zolotarev, V.M. (1964)
The first-passage time of a level and the behaviour at infinity for a
class of processes with independent increments.
{\em Theory Prob. Appl.} {\bf 9}, 653-664.


\end{thebibliography}
\end{document}